\numberwithin{equation}{section}
\newcommand\myshade{85}
\colorlet{mylinkcolor}{violet}
\colorlet{mycitecolor}{red}
\colorlet{myurlcolor}{cyan}
\newtheorem{Thm}{Theorem}[section]
\newtheorem*{Thm*}{Theorem}
\newtheorem{Lem}[Thm]{Lemma} \newtheorem{Def}[Thm]{Definition}
\newtheorem{Cor}[Thm]{Corollary}
\newtheorem{Prop}[Thm]{Proposition}
\newtheorem{Prop-Def}[Thm]{Proposition-Definition}
\newtheorem{Ex}[Thm]{Example}
\newtheorem{Rem}[Thm]{Remark}
\theoremstyle{definition}
\newcommand{\Mod}{\mathrm{Mod}}
\newcommand{\proj}{\mathrm{proj}}
\newcommand{\Hom}{\mathrm{Hom}}
\newcommand{\cHom}{{\mathcal H}om}
\newcommand{\RHom}{\mathbf{R}\mathrm{Hom}}
\newcommand{\w}{\widetilde}
\newcommand{\ov}{\overline}
\newcommand{\lra}{\longrightarrow}
\newcommand{\ra}{\rightarrow}
\newcommand{\sdp}{\times\kern-.2em\vrule height1.1ex depth-.05ex}
\newcommand{\epi}{\lra \kern-.8em\ra}
\newcommand{\T}{\mathcal T}
\newcommand{\cal}{\mathcal}
\newcommand{\Z}{{\mathbb Z}}
\newcommand{\D}{\mathscr{D}}
\newcommand{\K}{\mathscr{K}}
\newcommand{\C}{\mathscr{C}}
\renewcommand{\L}{\mathbf{L}}
\newcommand{\A}{\mathcal{A}}
\newcommand{\B}{\mathcal{B}}
\newcommand{\h}{{\mathrm H}}
\renewcommand{\mod}{\mathrm{mod}}
\newcommand{\per}{\mathsf {per}}
\newcommand{\fd}{\mathrm{fd}}
\newcommand{\fg}{\mathrm{fg}}
\newcommand{\sg}{\mathrm{sg}}
\newcommand{\dg}{\mathrm{dg}}
\newcommand{\agk}{\mathrm{agk}}
\newcommand{\tr}{\mathrm{tr}}
\newcommand{\thick}{\mathsf{thick}}
\newcommand{\Loc}{\mathsf{Loc}}
\newcommand{\ke}{\mathsf{Ker}}
\newcommand{\im}{\mathsf{Im}}
\newcommand{\cok}{\mathsf{cok}}
\newcommand{\LT}{\mathbf{L}T}
\renewcommand{\bf}{\mathbf}
\newcommand{\oop}{\rm op}
\newcommand{\bop}{\bigoplus}
\newcommand{\xra}{\xrightarrow}
\newcommand{\ten}{\otimes}
\newcommand{\lten}{\otimes^{\bf L}}
\renewcommand{\to}{\rightarrow}
\DeclareMathOperator{\add}{\mathsf {add}}
\newcommand{\gl}{\mathop{{\rm gl.dim\,}}\nolimits}
\newcommand{\RshHom}{\mathbf{R}\strut\kern-.2em\mathscr{H}\strut\kern-.3em\operatorname{om}\nolimits}
\newcommand{\shHom}{\mathscr{H}\strut\kern-.3em\operatorname{om}\nolimits}
\newcommand{\shEnd}{\mathscr{E}\strut\kern-.3em\operatorname{nd}\nolimits}
\newcommand{\cEnd}{{\mathcal{E}nd}}
\newcommand{\za}{\alpha}
\newcommand{\ie}{\emph{i.e.~}}
\definecolor{dark-green}{RGB}{14,150,2}
\definecolor{red}{RGB}{250,0,0}
\newcommand{\jin}{\color{purple}}
\begin{document}

\title{A localisation theorem for singularity categories of proper dg algebras}
\author{Haibo Jin, Dong Yang and Guodong Zhou}

\dedicatory{Dedicated to Bernhard Keller on the occasion of his 60th birthday}

\address{Haibo Jin, Department of Mathematics, Universit\"at zu K\"oln, Weyertal 86-90, 50931 K\"oln, Germany}
\email{hjin@math.uni-koeln.de}

\address{Dong Yang, Department of Mathematics, Nanjing University, 22 Hankou Road, Nanjing 210093, China}
\email{yangdong@nju.edu.cn}

\address{Guodong Zhou, School of Mathematical Sciences,  Key Laboratory of Mathematics and Engineering Applications(Ministry of Education), Shanghai Key Laboratory of PMMP,  East China Normal University, Shanghai 200241, China}
\email{gdzhou@math.ecnu.edu.cn}

  \renewcommand{\thefootnote}{\alph{footnote}}
  \setcounter{footnote}{-1} \footnote{\emph{Mathematics Subject Classification(2020)}:
  18G80, 
  16E45
  }
  \setcounter{footnote}{-1} \footnote{\emph{Keywords}:
  DG category,  singularity category, recollement,  idempotent, homological epimorphism}

 \begin{abstract}
Given a recollement of three proper dg algebras over a noetherian commutative ring, e.g. three algebras which are finitely generated as modules over the base ring, which extends one step downwards, it is shown that there is a short exact sequence of their singularity categories.  This allows us to recover and generalise some known results on singularity categories of finite-dimensional algebras.
 \end{abstract}
 \maketitle
\setcounter{tocdepth}{1}
 \tableofcontents

 \section{Introduction}

For a right noetherian ring, the \emph{singularity category} (\cite{Buchweitz,Orlov04}) of $A$ is defined as the triangle quotient
\[
\D_{\sg}(A):=\D^b(\mod A)/\K^b(\proj A),
\]
where $\D^b(\mod A)$ and $\K^b(\proj A)$ are the bounded derived category of finitely generated right $A$-modules and the bounded homotopy category of finitely generated projective right $A$-modules, respectively. It measures the degree to which $A$ is `singular' \cite{Orlov04}, and captures the stable homological features of $A$ \cite{Buchweitz}. In this paper we show the following localisation theorem.

\begin{Thm}
\label{thm:main-result-1}
Let $k$ be a commutative noetherian ring and $A,B$ and $C$ be $k$-algebras which are finitely generated as $k$-modules. Assume that there is a recollement of unbounded derived categories in the sense of \cite{BBD}
\begin{equation}\label{equ:recolintro}
 \xymatrixcolsep{4pc}\xymatrix{\D(B) \ar[r] &\D(A) \ar@/_1pc/[l] \ar@/^1pc/[l] \ar[r]  &\D(C) \ar@/^-1pc/[l] \ar@/^1pc/[l]}
\end{equation}
\vspace{1pt}

\noindent which extends one step downwards in the obvious sense. Then the second row induces a short exact sequence of triangulated categories
 \[\label{equ:sesDsg}
 \xymatrix{
 \D_{\sg}(B) \ar[r] &  \D_{\sg}(A) \ar[r] & \D_{\sg}(C).}
 \]
\end{Thm}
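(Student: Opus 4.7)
The plan is to work in the dg-algebraic framework suggested by the paper's title: viewing $A$, $B$, $C$ as proper dg algebras concentrated in degree zero (proper because they are finitely generated over $k$), one knows that each singularity category can be realised as the Verdier quotient of the subcategory $\D_{\fd}(-) \subset \D(-)$ of dg modules whose total cohomology is finitely generated as a $k$-module, by the perfect subcategory $\per(-)$. In the classical case this recovers $\D^b(\mod -)/\K^b(\proj -) = \D_{\sg}(-)$, so it suffices to establish the short exact sequence in the dg setting.

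First I would invoke the theorems of Nicol\'as--Saor\'in and Keller characterising recollements of derived categories of dg algebras: up to dg quasi-equivalence, \eqref{equ:recolintro} is encoded by a homological epimorphism of dg algebras $A \to C$ together with a distinguished compact object of $\D(A)$ whose dg endomorphism algebra recovers $B$. The hypothesis that \eqref{equ:recolintro} extends one step downwards then translates into perfectness, over one of the three algebras, of a canonical bimodule representing the extra adjoint at the bottom of the ladder; this is the essential technical input.

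Combining this perfectness with the properness of $A$, $B$ and $C$, I would next verify that all six functors of the recollement restrict to functors between the subcategories $\D_{\fd}(-)$ and between the perfect subcategories $\per(-)$, thereby producing compatible short exact sequences of triangulated categories
\[
\D_{\fd}(B) \to \D_{\fd}(A) \to \D_{\fd}(C), \qquad \per(B) \to \per(A) \to \per(C).
\]
Taking Verdier quotients termwise and applying a nine-lemma type argument for short exact sequences of triangulated categories then yields the short exact sequence $\D_{\sg}(B) \to \D_{\sg}(A) \to \D_{\sg}(C)$.

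The step I expect to be the main obstacle is the preservation of perfectness by the middle-row functors, i.e.\ showing that $\per(B) \to \per(A)$ and $\per(A) \to \per(C)$ are well defined with thick image. This is precisely where the one-step-downward extension enters: the additional adjoint at the bottom of the ladder forces the corresponding middle functor to preserve compact objects, which without that hypothesis would generally fail. Once both displayed sequences are in place, checking that the image of $\D_{\sg}(B)$ in $\D_{\sg}(A)$ is thick with quotient equivalent to $\D_{\sg}(C)$ is essentially formal from the recollement structure together with Verdier localisation.
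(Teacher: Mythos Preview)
Your outline is close to the paper's argument, but two points need correction.

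First, the detour through Nicol\'as--Saor\'in is unnecessary, and in any case you have the roles of $B$ and $C$ reversed: in the recollement \eqref{equ:recolintro} it is $i_*\colon\D(B)\to\D(A)$ that is the fully faithful middle functor, so a homological epimorphism (if one chooses to produce it) would go $A\to B$, while $C$ is realised as the dg endomorphism algebra of the compact object $j_!(C)$. The paper dispenses with this entirely and works directly with the abstract recollement: since the ladder has four rows, Lemma~\ref{lem:half-recollement-for-per-etc} shows that the first two rows restrict to a left recollement on $\per$ and the middle two rows restrict to a right recollement on $\D_{\fg}$. In particular the second row already gives the two short exact sequences you want, with no bimodule analysis required.

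Second, and more importantly, the step you describe as ``essentially formal''---passing to Verdier quotients via a nine-lemma argument---is precisely where the content lies. Given a commutative square of short exact sequences of triangulated categories as in \eqref{ses-of-triangulated-categories}, the induced sequence on quotients is \emph{not} exact in general; Remark~\ref{rem:counterexample} gives explicit counterexamples. What is automatic is that $\bar{j}^*$ induces an equivalence $\D_{\sg}(A)/\thick(\bar{i}_*\D_{\sg}(B))\simeq\D_{\sg}(C)$ up to summands, but fullness of $\bar{i}_*$ must be proved by hand. This is Lemma~\ref{lem:ses-of-triangle-quotients}: one needs the restrictions $i_*|_{\per(B)}$ and $j^*|_{\per(A)}$ to admit left adjoints $i^*$, $j_!$ satisfying $\Hom(j_!(\per C),i_*(\D_{\fg}(B)))=0$. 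Here this vanishing is indeed a consequence of the recollement axiom $j^*i_*=0$, so the hypothesis is met---but you should make this step explicit rather than absorb it into ``Verdier localisation''.
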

  Here a sequence of triangulated categories $\mathcal{T'}\xleftarrow[]{F}\mathcal{T}\xleftarrow[]{G}\mathcal{T''}$ is called a \emph{short exact sequence (up to direct summands)} if G is fully faithful, $F\circ G=0$ and the induced functor $\overline{F}\colon \mathcal{T}/\mathcal{T''}\ra\mathcal{T'}$ is an equivalence (up to direct summands). A consequence of Theorem~\ref{thm:main-result-1} is

\begin{Cor}
\label{cor:triangular-matrix-algebra}
Let $B$ and $C$ be $k$-algebras which are finitely generated as $k$-modules and $M$ be a $C$-$B$-bimodule which is finitely generated on both sides. Consider the matrix algebra $A=\left( \begin{array}{cc} B & 0\\ M & C\end{array}\right)$. Then there is a short exact sequence of triangulated categories
 \[\label{equ:sesDsg}
 \xymatrix{
 \D_{\sg}(B) \ar[r] &  \D_{\sg}(A) \ar[r] & \D_{\sg}(C).}
 \]
\end{Cor}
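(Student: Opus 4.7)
The plan is to apply Theorem~\ref{thm:main-result-1} to the recollement produced by the idempotent $e:=\left(\begin{smallmatrix}0 & 0 \\ 0 & 1\end{smallmatrix}\right) \in A$.

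First, $A$ is a finitely generated $k$-module: we have $A\cong B\oplus M\oplus C$ as a $k$-module, both $B$ and $C$ are finitely generated over $k$ by hypothesis, and $M$ is finitely generated as a $C$-module (say on the left), hence also as a $k$-module since $C$ itself is finitely generated over $k$.

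Second, I construct the required recollement. A direct matrix computation gives
\[
eAe\cong C,\qquad A/AeA\cong B,\qquad AeA=eA,
\]
so that $AeA$ coincides with the projective right ideal $eA$. Moreover $Ae$ is free of rank one as a right $C$-module, so the multiplication map $Ae\otimes^{\mathbf{L}}_{C}eA\to AeA$ is an isomorphism and all higher Tors vanish automatically. This is precisely the Cline--Parshall--Scott condition that $AeA$ is a stratifying ideal, and the associated standard six-functor recollement
\[
\D(B)\;\rightleftarrows\;\D(A)\;\rightleftarrows\;\D(C)
\]
is exactly the one required in \eqref{equ:recolintro}.

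Third, I verify that this recollement extends one step downwards. The short exact sequence of right $A$-modules
\[
0\to eA\to A\to B\to 0
\]
shows that $B$ has projective dimension at most one as a right $A$-module, and in particular is a perfect (hence compact) object of $\D(A)$. This furnishes the extra right adjoint to $i^!$ on the $B$-side of the recollement; dually, the fact that $eA$ is itself a finitely generated projective $A$-module provides the extra left adjoint to $j_!$ on the $C$-side. With the eight-functor diagram now in place, Theorem~\ref{thm:main-result-1} applies and delivers the desired short exact sequence $\D_{\sg}(B)\to\D_{\sg}(A)\to\D_{\sg}(C)$. The only nontrivial step is the downward extension, but for triangular matrix algebras it is uniformly handled by the equality $AeA=eA$, which simultaneously trivialises the stratifying condition and forces $B$ to be a perfect $A$-module.
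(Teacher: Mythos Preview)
Your argument is correct and follows the same route as the paper: build the standard idempotent recollement for the triangular matrix algebra, check that it extends one step downwards, and invoke Theorem~\ref{thm:main-result-1}. The paper packages the first two steps as Lemma~\ref{lem:ladder-for-matrix-algebra}(a), where the downward extension is obtained from the equivalent condition $j^*(A)=Ae\cong eAe\in\per(C)$ rather than your $i_*(B)=B_A\in\per(A)$; by Lemma~\ref{lem:extending-recollement-downward} these are interchangeable.

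One small slip: your ``dually'' clause speaks of an \emph{extra left adjoint to $j_!$}, but that would be an upward extension, not a downward one. What you actually need (and already have, once $i_*$ preserves compacts) is a right adjoint to $j_*$; this comes automatically from Lemma~\ref{lem:extending-recollement-downward}, so the sentence can simply be deleted.
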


Theorem~\ref{thm:main-result-1} and Corollary~\ref{cor:triangular-matrix-algebra} allow us to recover and generalise some known results on singularity categories \cite{Chen09,Chen10,Chen14,LiuLu15,Qin20}. Also as an application of Theorem~\ref{thm:main-result-1}, the singularity categories of  finite-dimensional graded monomial algebras are studied in \cite{Jin-Schroll}.


We state and prove Theorem~\ref{thm:main-result-1} in the more general setting of proper dg algebras (Theorem~\ref{Thm:recoltoDsg}), and present a similar result for the AGK category of a homologically smooth dg algebra (Theorem~\ref{Thm:sescluster}). 
We also establish a further generalisation.

\begin{Thm}[Theorem~\ref{thm:s.e.s.-of-quotients-in-the-first-row}]
\label{Thm:First Main Result}
Assume the following data are given:
   \begin{itemize}
     \item $\mathcal{T}$, $\mathcal{T}'$, $\mathcal{T}''$ are compactly generated triangulated $k$-categories together with a recollement of $\mathcal{T}$ in terms of $\mathcal{T}'$ and $\mathcal{T}''$,
where $\mathcal{T}$ is algebraic;

    \item $\cal S$, $\cal S'$ and $\cal S''$ are triangulated subcategories of $\mathcal{T}$, $\mathcal{T}'$ and $\mathcal{T}''$,  which  contain $\mathcal{T}^c$, $\mathcal{T}'^c$ and $\mathcal{T}''^c$, respectively;
    \item the first row of the recollement restricts to a short exact sequence up to direct summands $\cal S'\leftarrow \cal S\leftarrow\cal S''$.
    \end{itemize}
Then the first row of the recollement induces a short exact sequence of triangulated categories up to  direct summands:
\[
\xymatrix{
\mathcal{S}'/\mathcal{T}'^c &  \mathcal{S}/\mathcal{T}^c \ar[l] & \mathcal{S}''/\mathcal{T}''^c \ar[l].
}
\]
\end{Thm}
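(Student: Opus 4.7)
The plan is to run a $3\times 3$-style argument for triangulated quotients, combining the hypothesised short exact sequence $\mathcal{S}'\leftarrow\mathcal{S}\leftarrow\mathcal{S}''$ with a short exact sequence on the compact objects. Denote the first row of the recollement by $F\colon \mathcal{T}''\to\mathcal{T}$ (which is fully faithful) and $G\colon\mathcal{T}\to\mathcal{T}'$ (which is a Verdier quotient, with kernel the essential image of $F$). Since the recollement is compactly generated and each of $F$, $G$ admits a right adjoint preserving arbitrary coproducts, both $F$ and $G$ preserve compact objects. By Neeman's version of the Thomason localisation theorem, $F$ and $G$ therefore restrict to a short exact sequence of triangulated categories up to direct summands
\[
\mathcal{T}'^c \xleftarrow{\;G\;} \mathcal{T}^c \xleftarrow{\;F\;} \mathcal{T}''^c.
\]

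Since $\mathcal{T}''^c\subset\mathcal{S}''$, $\mathcal{T}^c\subset\mathcal{S}$, $\mathcal{T}'^c\subset\mathcal{S}'$ and $F$, $G$ restrict to $\mathcal{S}''\to\mathcal{S}\to\mathcal{S}'$ by hypothesis, they induce well-defined triangulated functors
\[
\bar F\colon \mathcal{S}''/\mathcal{T}''^c \to \mathcal{S}/\mathcal{T}^c, \qquad \bar G\colon \mathcal{S}/\mathcal{T}^c \to \mathcal{S}'/\mathcal{T}'^c,
\]
with $\bar G\circ\bar F = 0$ inherited from $G\circ F = 0$. Thus what remains is to prove that $\bar F$ is fully faithful up to direct summands and that $\bar G$ induces an equivalence $(\mathcal{S}/\mathcal{T}^c)/\bar F(\mathcal{S}''/\mathcal{T}''^c)\xrightarrow{\sim}\mathcal{S}'/\mathcal{T}'^c$ up to direct summands.

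For fully faithfulness of $\bar F$, I would combine (i) that $F$ restricts to a fully faithful functor $\mathcal{S}''\hookrightarrow\mathcal{S}$ with (ii) the equality $F^{-1}(\mathcal{T}^c)\cap\mathcal{S}''=\mathcal{T}''^c$, which is a consequence of Step~1 together with the fact that $F$ detects compactness (it is fully faithful and coproduct-preserving). Given these, morphisms in $\mathcal{S}/\mathcal{T}^c$ between objects in the image of $F$ can be computed through roofs with left leg having cone in $\mathcal{T}^c$, and one shows that, up to direct summands, such a roof is realised inside $\mathcal{S}''$. For the quotient claim, Verdier's iterated-quotient formula gives, up to direct summands,
\[
(\mathcal{S}/\mathcal{T}^c)/\bar F(\mathcal{S}''/\mathcal{T}''^c) \;\simeq\; \mathcal{S}/(\mathcal{S}''\vee \mathcal{T}^c) \;\simeq\; (\mathcal{S}/\mathcal{S}'')/\overline{\mathcal{T}^c},
\]
and the hypothesised short exact sequence yields $\mathcal{S}/\mathcal{S}''\simeq\mathcal{S}'$ up to summands, under which $\overline{\mathcal{T}^c}$ corresponds to $\mathcal{T}'^c$ by Step~1. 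This produces the desired equivalence, with the map being $\bar G$.

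The main obstacle is the pervasive ``up to direct summands'' bookkeeping: Verdier quotients of idempotent-complete triangulated categories need not be idempotent-complete (Balmer--Schlichting), so at each step one must either pass to idempotent completions or invoke Thomason's classification of dense triangulated subcategories to compare the subcategories involved. The technical heart of the argument is a clean formulation of the $3\times 3$-lemma for triangulated localisations up to direct summands, applied to the pair of nested short exact sequences supplied by Step~1 and by the hypothesis.
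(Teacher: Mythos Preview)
Your outline has a genuine gap at the step you flag as ``one shows that, up to direct summands, such a roof is realised inside $\mathcal{S}''$.'' This is exactly the crux of the argument, and it does not follow from the ingredients you list. Concretely: given a roof $j_!X\xleftarrow{s} Z\to j_!Y$ in $\mathcal{S}$ with $U=\mathrm{Cone}(s)\in\mathcal{T}^c$, the natural move is to use the recollement triangle $j_!j^*U\to U\to i_*i^*U$ and the vanishing $i^*j_!=0$ to factor $j_!X\to U$ through $j_!j^*U$. This replaces $U$ by $j^*U$, but $j^*U$ has no reason to lie in $\mathcal{T}''^c$: that would require $j^*$ to preserve compacts, i.e.\ the recollement to extend one step downwards, which is \emph{not} assumed here. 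The same obstruction blocks any attempt to invoke the paper's Lemma~3.1, since neither $j_!|_{\mathcal{T}''^c}$ nor $i^*|_{\mathcal{T}^c}$ is known to admit the required adjoint at the level of compact objects. Your detection-of-compactness observation $F^{-1}(\mathcal{T}^c)\cap\mathcal{S}''=\mathcal{T}''^c$ only tells you that $\bar F$ kills no objects; it does not yield fullness.

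A telling sign is that your argument never uses the hypothesis that $\mathcal{T}$ is algebraic. The paper's proof depends on it essentially: one passes to derived categories of dg categories, takes standard lifts $(\mathcal{B},X)$, $(\mathcal{B}',X')$, $(\mathcal{B}'',X'')$ of $\mathcal{S}$, $\mathcal{S}'$, $\mathcal{S}''$, and shows (via the converse of Neeman's localisation theorem, Lemma~6.2) that the lifts assemble into a new recollement of $\D\mathcal{B}$ in terms of $\D\mathcal{B}'$ and $\D\mathcal{B}''$. Each column then fits into a further recollement with third term $\D\mathcal{C}$, $\D\mathcal{C}'$, $\D\mathcal{C}''$ (Proposition~6.3), and the key step (Proposition~6.7) is to produce a recollement among these $\D\mathcal{C}$'s. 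Neeman's theorem applied to \emph{that} recollement gives a short exact sequence of $\per\mathcal{C}$'s, which one identifies with the desired quotients $\mathcal{S}/\mathcal{T}^c$ etc.\ up to summands. The point is that by enlarging to $\D\mathcal{B}$ one gains enough adjoints (coming from dg bimodules) to build the recollement one needs; those adjoints are simply unavailable in your $3\times 3$ picture at the level of $\mathcal{S}$ and $\mathcal{T}^c$.
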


In Section~\ref{Sect: geometrical application}, we present an application of Theorem~\ref{Thm:First Main Result} to algebraic geometry by providing a new proof of the following result. 

\begin{Thm}[{\cite[Theorem 1.3]{Chen10}}] Let $X$ be a scheme satisfying Orlov's condition (ELF).
Let  $U\stackrel{i}{\hookrightarrow} X$ an open subscheme and write  $Z=X\backslash
U$.
      Then there exists a short exact sequence  of  triangulated categories: 
\[ \xymatrix{\D_{\sg}(U) & \D_{\sg}(X) \ar[l]|{i^*} & \D_{\sg, Z} (X) \ar[l]|{j_!}. }
\]

  \end{Thm}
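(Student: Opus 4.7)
The plan is to apply Theorem~\ref{Thm:First Main Result} to the standard localisation recollement associated to the open immersion $i\colon U \hookrightarrow X$ with closed complement $Z = X \setminus U$. More precisely, I take the middle category to be $\cal{T} = \D(\cal{Q}\mathrm{coh}\,X)$, the left (quotient) category to be $\cal{T}' = \D(\cal{Q}\mathrm{coh}\,U)$, and the right (embedded) category to be $\cal{T}'' = \D_{\cal{Q}\mathrm{coh}_Z}(X)$, the full subcategory of $\cal{T}$ consisting of complexes whose cohomology sheaves are set-theoretically supported on $Z$. Under Orlov's hypothesis (ELF), all three are compactly generated algebraic triangulated $k$-categories (as derived categories of Grothendieck abelian categories, which carry canonical dg enhancements), and their subcategories of compact objects are identified with the perfect complexes $\mathrm{perf}(X)$, $\mathrm{perf}(U)$ and $\mathrm{perf}_Z(X)$ respectively.

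For the triangulated subcategories required by Theorem~\ref{Thm:First Main Result} I would take $\cal{S} = \D^b(\mathrm{coh}\,X)$, $\cal{S}' = \D^b(\mathrm{coh}\,U)$ and $\cal{S}'' = \D^b_Z(\mathrm{coh}\,X)$. These contain the corresponding compact subcategories because on a noetherian scheme every perfect complex has bounded coherent cohomology. The only non-formal hypothesis to verify is then that the first row of the recollement restricts to a short exact sequence of triangulated categories (up to direct summands)
\[
\cal{S}' \leftarrow \cal{S} \leftarrow \cal{S}''.
\]
Full faithfulness of $\cal{S}'' \to \cal{S}$ is standard, while the essential surjectivity (up to direct summands) of the induced functor $\cal{S}/\cal{S}'' \to \cal{S}'$ reduces, via the noetherian hypothesis and truncation, to the classical fact that every coherent sheaf on $U$ extends to a coherent sheaf on $X$ (one extends degree by degree and passes to a bounded truncation to stay inside $\cal{S}$).

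Granting this verification, Theorem~\ref{Thm:First Main Result} immediately produces the short exact sequence
\[
\cal{S}'/{\cal{T}'}^{c} \leftarrow \cal{S}/\cal{T}^{c} \leftarrow \cal{S}''/{\cal{T}''}^{c},
\]
which, after the identifications of the compact objects with perfect complexes, is precisely $\D_{\sg}(U) \leftarrow \D_{\sg}(X) \leftarrow \D_{\sg,Z}(X)$, with induced functors $i^*$ (open restriction) and $j_!$ (closed pushforward) by construction. The main obstacle in implementing this outline is the essential surjectivity (up to summands) step for the $\cal{S}$-level sequence: one has to control the extension of bounded coherent complexes across the open/closed decomposition using only noetherianness and the ELF condition. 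Everything else is either formal content of Theorem~\ref{Thm:First Main Result} or standard input about localisation recollements on noetherian schemes satisfying (ELF).
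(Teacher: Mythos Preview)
Your approach is essentially identical to the paper's: both apply Theorem~\ref{thm:s.e.s.-of-quotients-in-the-first-row} to J{\o}rgensen's recollement, using the identification of compact objects with perfect complexes and the known localisation sequence for $\D^b(\mathrm{coh})$ (which the paper simply cites from \cite{Orlov10}) as the $\mathcal{S}$-level input. The only point you leave slightly implicit is upgrading the conclusion from ``up to direct summands'' to a genuine short exact sequence; the paper handles this by observing that $i^*\colon \D^b(\mathrm{Coh}(X))\to\D^b(\mathrm{Coh}(U))$ is already essentially surjective, which is exactly your extension-of-coherent-sheaves argument.
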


\medskip

\subsection*{Notation}
Throughout, let $k$ be a commutative ring.
Let $\mathcal{T}$ be a triangulated $k$-category. For a set $\mathcal{S}$ of objects of $\mathcal{T}$, denote 
by $\thick(\mathcal{S})=\thick_{\mathcal{T}}(\mathcal{S})$ the smallest triangulated subcategory of $\mathcal{T}$ which contains $\mathcal{S}$ and which is closed under taking direct summands. 
For a triangle functor $F\colon\mathcal{T}\to\mathcal{T}'$, denote
\[
\ke(F):=\{X\in\mathcal{T}\mid F(X)\cong 0\}.
\]
Assume that $\mathcal{T}$ has infinite direct sums.
An object $X$ of $\T$ is \emph{compact}, if $\Hom_{\T}(X,?)$ commutes with infinite direct sums. We denote by $\mathcal{T}^c$ the subcategory of $\mathcal{T}$ consisting of compact objects. For  a set $\mathcal{S}$ of objects of $\mathcal{T}$, denote by $\Loc(\mathcal{S})=\Loc_{\mathcal{T}}(\mathcal{S})$ the smallest triangulated subcategory of $\mathcal{T}$ which contains $\mathcal{S}$ and which is closed under taking direct sums.

 \bigskip

\noindent\textbf{Acknowledgement}: The first author was partially supported by  the Deutsche Forschungsgemeinschaft (DFG)  through the project SFB/TRR 191 Symplectic Structures
in Geometry, Algebra and Dynamics (Projektnummer 281071066--TRR 191). The first and the third author were supported by the National Natural Science Foundation of China (No.  12071137), by  Key Laboratory of MEA(Ministry of Education) and by  Shanghai Key Laboratory of PMMP  (No.   22DZ2229014). The second author was supported by the National Natural Science Foundation of China (No. 12031007) and the DFG program SPP 1388 (YA297/1-1 and KO1281/9-1).

\bigskip

 \section{Derived categories of dg algebras and recollements}

In this section we recall and establish some basic results on derived categories of dg algebras and on recollements.

\subsection{Derived categories of dg algebras}
\label{ss:derived-category-of-dg-algebras}\

We follow \cite{Keller94,Keller06}.
Let $A$ be a dg $k$-algebra. By dg $A$-modules we will mean right dg $A$-modules, and we will identify left dg $A$-modules with right dg $A^{\oop}$-modules, where $A^{\oop}$ is the opposite dg algebra of $A$. A $k$-algebra $A$ can be viewed as a dg $k$-algebra concentrated in degree $0$. In this case, a dg $A$-module is exactly a complex of $A$-modules. 

For two dg $A$-modules $M$ and $N$, let $\cHom_A(M,N)$ be the complex of $k$-modules with $\cHom_A^p(M,N)$ consisting of the homogeneous $A$-linear maps $M\to N$ of degree $p$ and with differential taking $f\in\cHom_A^p(M,N)$ to $d_N\circ f-(-1)^p f\circ d_M$. The \emph{dg category of dg $A$-modules} $\C_{\dg}(A)$ has dg $A$-modules as its objects and for two dg $A$-modules
\[
\Hom_{\C_{\dg}(A)}(M,N):=\cHom_A(M,N).
\]
The \emph{homotopy category of dg $A$-modules} $\K(A)$ has dg $A$-modules as its objects and for two dg $A$-modules
\[
\Hom_{\K(A)}(M,N):=\h^0\cHom_A(M,N).
\]
A dg $A$-module $M$ is said to be \emph{acyclic} if $\h^p(M)$ vanishes for any $p\in\mathbb{Z}$.
The \emph{derived category of dg $A$-modules} $\D(A)$ is the triangle quotient of $\D(A)$ by the full subcategory of acyclic dg $A$-modules. Put $\per(A):=\thick_{\D(A)}(A_A)$, called the \emph{perfect derived category} of $A$. Then $\D(A)$ has infinite direct sums and is compactly generated by $A_A$, and $\per(A)=\D(A)^c$ (see \cite[Section 5]{Keller94}). If $A$ is a $k$-algebra, considered as a dg $k$-algebra concentrated in degree $0$, then $\D(A)=\D(\Mod A)$ is exactly the unbounded derived category of the category $\Mod A$ of all $A$-modules, and there is a canonical triangle equivalence from the bounded homotopy category $\K^b(\proj A)$ of finitely generated projective $A$-modules to $\per(A)$.

A dg $A$-module $P$ is said to be \emph{$\K$-projective} if $\cHom_A(P,?)$ takes an acyclic dg $A$-module to an acyclic complex of $k$-modules. For example, the free dg $A$-module $A_A$ of rank $1$ is $\K$-projective. It follows from \cite[Theorem 3.1]{Keller94} that for any dg $A$-module $M$ there is always a quasi-isomorphism $P_M\to M$ of dg $A$-modules with $P_M$ being $\K$-projective. Such a quasi-isomorphism is called a \emph{$\K$-projective resolution} of $M$. If $P$ is a $\K$-projective dg $A$-module and $N$ is any dg $A$-module, then
\begin{align}
\label{eq:morphism-in-derived-category}
\Hom_{\D(A)}(P,N)\cong\Hom_{\K(A)}(P,N)=\h^0\cHom_A(P,N).
\end{align}
In particular, $\Hom_{\D(A)}(A,A[p])=\h^p(A)$ for $p\in\mathbb{Z}$.

\smallskip

\subsubsection{Derived functors}\

Let $A$ and $B$ be two dg $k$-algebras and $X$ be a dg $B$-$A$-bimodule. Then there is an adjoint pair of triangle functors
\[
\xymatrix@C=6pc{
\D(A)  \ar@<-1ex>[r]_{\RHom_A(X,?)}& \D(B)\ar@<-1ex>[l]_{?\lten_B X} .
}
\]
Assume that $X_A$ is $\K$-projective and that $X_A\in\per(A)$. Then by \cite[Lemma 6.2]{Keller94}, there is an isomorphism $\RHom_A(X,?)\cong ?\lten_A X^{\tr}$ of triangle functors, where $X^{\tr}=X^{\tr_A}:=\cHom_A(X,A)$. The following lemma is immediate.

\begin{Lem}
\label{lem:generating-adjoint-triple}
Assume that $X_A$ is $\K$-projective and that $X_A\in\per(A)$. Then there is an adjoint triple of triangle functors
\[
\xymatrixcolsep{5pc}\xymatrix{
\D(A)  \ar[r]|{?\lten_A X^{\tr}} &\D(B). \ar@/_1pc/[l]_{?\lten_B X} \ar@/^1pc/[l]^{\RHom_A(X^{\tr},?)}
  } \]
We call this triple the adjoint triple induced by $X$.
\end{Lem}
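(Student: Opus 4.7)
The strategy is to splice two standard derived tensor--hom adjunctions, using the cited isomorphism $\RHom_A(X,?)\cong ?\lten_A X^{\tr}$ from \cite[Lemma~6.2]{Keller94} as the hinge between them.

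For the left half of the triple, I would invoke the derived tensor--hom adjunction for the dg $B$-$A$-bimodule $X$, which gives the adjoint pair
\[
(?\lten_B X,\ \RHom_A(X,?))\colon \D(B) \rightleftarrows \D(A).
\]
Since $X_A$ is $\K$-projective and lies in $\per(A)$, the quoted isomorphism applies and replaces $\RHom_A(X,?)$ by $?\lten_A X^{\tr}$, producing the adjoint pair $(?\lten_B X,\ ?\lten_A X^{\tr})$.

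For the right half, I would apply the standard derived tensor--hom adjunction to the dg $A$-$B$-bimodule $X^{\tr}=\cHom_A(X,A)$, whose left $A$-action is inherited from $A_A$ and whose right $B$-action is induced by the left $B$-action on $X$. This produces the adjoint pair $(?\lten_A X^{\tr},\ \RHom_B(X^{\tr},?))\colon \D(A)\rightleftarrows \D(B)$. Splicing with the first pair gives the claimed adjoint triple.

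As the authors themselves note, the lemma is ``immediate'', so there is no substantive obstacle: the argument is purely formal once Keller's isomorphism has been invoked. The only book-keeping worth confirming is that the $A$-$B$-bimodule structure used on $X^{\tr}$ in the two halves is the same, which is clear from unwinding the definition of the transpose $\cHom_A(X,A)$.
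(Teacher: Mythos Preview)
Your proposal is correct and is exactly the immediate argument the paper has in mind: the tensor--hom adjunction for $X$ together with Keller's isomorphism gives the left half, and the tensor--hom adjunction for the $A$-$B$-bimodule $X^{\tr}$ gives the right half. (You correctly write the right adjoint as $\RHom_B(X^{\tr},?)$; the subscript $A$ in the displayed statement appears to be a typo, since $X^{\tr}$ is a right $B$-module.)
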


The following lemma is a generalisation of \cite[Lemma 2.8 (i)$\Rightarrow$(ii)]{AKLY}. The proof is similar, so we omit it.
\begin{Lem}
\label{lem:left-adjoint-of-tensor}
Assume that ${}_BX$ is $\K$-projective and that ${}_B X\in\per(B^{\oop})$. Then $?\lten_B X\colon \D(B)\to\D(A)$ has a left adjoint $?\lten_A X^{\mathrm{tr}_{B^{\oop}}}$.
\end{Lem}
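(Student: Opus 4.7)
The plan is to identify $?\lten_B X\colon \D(B)\to \D(A)$ with the right adjoint of the candidate left adjoint $?\lten_A Y$, where $Y := X^{\tr_{B^{\oop}}} = \cHom_{B^{\oop}}(X, B^{\oop})$. The right $A$-action on $X$ and the left $B^{\oop}$-action on $B^{\oop}$ together give $Y$ the structure of a dg $A$-$B$-bimodule, and the standard tensor--hom adjunction for this bimodule yields $?\lten_A Y \dashv \RHom_B(Y,\,?)$. Hence it suffices to construct a natural isomorphism of triangle functors
\[
\Phi\colon\ ?\lten_B X\ \xrightarrow{\ \sim\ }\ \RHom_B(Y,\,?)\ \text{from } \D(B) \text{ to } \D(A),
\]
after which $?\lten_A Y = ?\lten_A X^{\tr_{B^{\oop}}}$ is automatically left adjoint to $?\lten_B X$. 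For $\Phi$ I take the evident evaluation map $\Phi_N(n\otimes x) = (\phi\mapsto n\cdot \phi(x))$, interpreted in the derived category.

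I would then verify that $\Phi_B$ is an isomorphism. Since ${}_BX$ is $\K$-projective, $Y$ already computes $\RHom_{B^{\oop}}(X, B^{\oop})$; the classical derived duality $\RHom_{B^{\oop}}(-,\,B^{\oop})$ restricts to a triangle equivalence $\per(B^{\oop})^{\oop}\xrightarrow{\sim}\per(B)$, and it sends $X\in\per(B^{\oop})$ to $Y\in\per(B)$. Evaluating $\Phi$ at $N=B$ then reduces the claim to the biduality quasi-isomorphism
\[
X\ \longrightarrow\ \RHom_B\!\bigl(\RHom_{B^{\oop}}(X,B^{\oop}),\,B\bigr),
\]
which is tautologically an isomorphism when $X = {}_BB$. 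Since the class of objects of $\D(B^{\oop})$ for which this map is a quasi-isomorphism is closed under shifts, triangles, and direct summands, it contains all of $\per(B^{\oop}) = \thick_{\D(B^{\oop})}({}_BB)$, and hence our $X$ by hypothesis.

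Finally I would extend from $B$ to all of $\D(B)$ by localising-closure. Both $?\lten_B X$ and $\RHom_B(Y,?)$ are triangle functors $\D(B)\to \D(A)$ preserving arbitrary direct sums --- the former trivially, and the latter because $Y\in\per(B)=\D(B)^c$. Therefore the full subcategory $\{N\in\D(B)\mid \Phi_N\ \text{is an iso}\}$ is localising, and as it contains the compact generator $B$ it equals $\Loc_{\D(B)}(B)=\D(B)$. Combining $\Phi$ with $?\lten_A Y\dashv \RHom_B(Y,?)$ yields the desired adjunction $?\lten_A X^{\tr_{B^{\oop}}}\dashv\ ?\lten_B X$. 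The main technical point is the biduality isomorphism at $N=B$; everything else is standard dévissage along the compact generator $B\in\D(B)$.
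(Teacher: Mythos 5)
Your proof is correct, and since the paper only cites \cite[Lemma 2.8]{AKLY} and says ``the proof is similar, so we omit it,'' you are supplying an argument the authors chose not to write out. Your strategy --- set $Y := X^{\tr_{B^{\oop}}}$, reduce to the isomorphism $?\lten_B X \cong \RHom_B(Y,?)$, verify it at $N = B$ via biduality, and propagate by d\'evissage from the compact generator $B$ using that both functors commute with arbitrary direct sums --- is exactly the standard route and is almost certainly what ``similar to AKLY'' refers to (Keller's \cite[Lemma 6.2]{Keller94}, which the paper uses one paragraph earlier for the right adjoint, is proved in the same way).

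Two small points worth being explicit about, though neither is a gap. First, to interpret $\Phi_N$ at the chain level you want $Y$ to be $\K$-projective over $B$; this does follow from the hypotheses, since $\cHom_{B^{\oop}}(-,B^{\oop})\colon\K(B^{\oop})^{\oop}\to\K(B)$ is a triangle functor sending $B^{\oop}$ to $B$, hence sending $\thick_{\K(B^{\oop})}(B^{\oop})$ --- which contains $X$ because $X$ is both $\K$-projective and perfect --- into $\thick_{\K(B)}(B)$, whose objects are all $\K$-projective and perfect. (Alternatively one may compose with a $\K$-projective resolution of $Y$; the argument is unaffected.) Second, the claim that the isomorphism class $\{N \mid \Phi_N \text{ iso}\}$ is localising implicitly uses that $\Phi$ is a natural transformation of \emph{triangle} functors, so that it is compatible with cones; this is automatic for the evaluation map you wrote down, but deserves a sentence. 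With these noted, the proof is complete and sound.
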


Let $M$ be a dg $A$-module and put $\cEnd_A(M):=\cHom_A(M,M)$. Then $M$ naturally becomes a dg $\cEnd_A(M)$-$A$-bimodule. The following lemma is well known.

\begin{Lem}
\label{lem:compact-object-induces-embedding}
The triangle functor $?\lten_{\cEnd_A(M)} M\colon \D(\cEnd_A(M))\to\D(A)$ is fully faithful if $M_A\in\per(A)$ and $M_A$ is $\K$-projective.
\end{Lem}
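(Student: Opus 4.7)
Write $E := \cEnd_A(M)$, so $M$ is naturally a dg $E$-$A$-bimodule and the functor in question is $F := ?\lten_E M \colon \D(E) \to \D(A)$. Since $M_A$ is $\K$-projective and $M_A \in \per(A)$, Lemma~\ref{lem:generating-adjoint-triple} applies with $X = M$ and $B = E$, giving a right adjoint $G := \RHom_A(M,?) \cong ?\lten_A M^{\tr}$ of $F$. The plan is to prove $F$ is fully faithful by showing that the unit $\eta_N \colon N \to GF(N) = \RHom_A(M, N \lten_E M)$ is an isomorphism for every $N \in \D(E)$.

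First I would verify the unit is an isomorphism at the generator $N = E$. On the one hand, $E \lten_E M \cong M$; on the other hand, since $M_A$ is $\K$-projective, the identification \eqref{eq:morphism-in-derived-category} gives $\RHom_A(M,M) \cong \cHom_A(M,M) = E$ in $\D(E)$, and one checks that $\eta_E$ corresponds to the identity of $E$. Hence $\eta_E$ is an isomorphism. Up to a shift, the same argument shows $\eta_{E[p]}$ is an isomorphism for every $p \in \Z$.

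Next I would argue that the full subcategory
\[
\mathcal{N} := \{ N \in \D(E) \mid \eta_N \text{ is an isomorphism}\}
\]
is a localising subcategory of $\D(E)$. Closure under shifts and triangles is automatic since $\eta$ is a natural transformation of triangle functors. Closure under arbitrary direct sums uses that both $F$ and $G$ commute with direct sums: $F$ as a left adjoint, and $G$ because $G \cong ?\lten_A M^{\tr}$, which is itself a tensor functor (this is precisely where the assumption $M_A \in \per(A)$ is used). Since $\mathcal{N}$ contains $E$ and $\D(E) = \Loc(E)$, we conclude $\mathcal{N} = \D(E)$, and therefore $F = ?\lten_E M$ is fully faithful.

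There is no serious obstacle in this argument; the only subtlety worth flagging is the dual role of the hypotheses on $M_A$: $\K$-projectivity is what makes $\RHom_A(M,M)$ identify with $E$ on the nose (giving $\eta_E$ is an iso), while perfectness upgrades $\RHom_A(M,?)$ to a tensor functor so that it commutes with direct sums (allowing the dévissage to all of $\D(E)$).
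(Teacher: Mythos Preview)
Your argument is correct and is precisely the standard d\'evissage proof one would expect; the paper itself omits the proof entirely, simply labeling the lemma ``well known''. Your explicit identification of the roles of the two hypotheses (K-projectivity for $\eta_E$ being an isomorphism, perfectness for $G$ commuting with direct sums) is accurate and matches the argument implicit in \cite[Lemma~4.2]{Keller94}, which the paper invokes elsewhere for closely related purposes.
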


\smallskip

\subsubsection{The finitely generated derived category}\

Assume that $k$ is noetherian. Let $\D_{\fg}(A)$ denote the full subcategory of $\D(A)$ consisting of dg $A$-modules $M$ such that $\h^*(M)$ is finitely generated over $k$. This is a triangulated subcategory of $\D(A)$. If $A$ is a $k$-algebra (viewed as a dg $k$-algebra concentrated in degree $0$) and is finitely generated as a $k$-module, then there is a natural triangle equivalence $\D^b(\mod A)\to\D_{\fg}(A)$. When $k$ is a field, we also denote this category by $\D_{\fd}(A)$. The following description of $\D_{\fg}(A)$ is obtained by d\'evissage, similar to \cite[Lemmas 2.4(a)]{AKLY}.

\begin{Lem}
\label{lem:description-of-Dfd}
Let $A$ be a dg $k$-algebra. Then for $X\in\D(A)$ the following conditions are equivalent:
\begin{itemize}
\item[(i)] $X\in \D_{\fg}(A)$,
\item[(ii)] $\h^*\RHom_A(P,X)$ is finitely generated over $k$ for any $P\in \per(A)$, \ie $\bigoplus_{p\in\mathbb{Z}}\Hom_{\D(A)}(P,X[p])$ is finitely generated over $k$. Here we consider $P$ as a dg $k$-$A$-bimodule.
\item[(iii)] $\h^*\RHom_A(T,X)$ is finitely generated over $k$ for a/any classical generator $T$ of $\per(A)$ (\ie $\thick(T)=\per(A)$).
\end{itemize}
\end{Lem}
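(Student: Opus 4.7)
The plan is to deduce all three equivalences from a single dévissage argument. I would fix $X\in\D(A)$ and consider the full subcategory
\[
\mathcal{P}_X:=\{P\in\per(A)\mid \h^*\RHom_A(P,X)\text{ is finitely generated over }k\}
\]
of $\per(A)$. The central claim is that $\mathcal{P}_X$ is a thick subcategory. Closure under shifts is immediate from $\RHom_A(P[n],X)\cong \RHom_A(P,X)[-n]$. For a distinguished triangle $P'\to P\to P''\to P'[1]$ in $\per(A)$, applying $\RHom_A(?,X)$ yields a triangle in $\D(k)$ and hence a long exact sequence in cohomology; since $k$ is noetherian, the finitely generated $k$-modules form a Serre subcategory of $\Mod k$, so if any two of $P',P,P''$ lie in $\mathcal{P}_X$ then so does the third. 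Closure under direct summands is equally formal, since a summand of a finitely generated $k$-module is finitely generated, again by noetherianity.

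With $\mathcal{P}_X$ in hand, the three implications should fall out at once. For (i)$\Rightarrow$(ii), the fact that $A_A$ is $\K$-projective gives a canonical identification $\RHom_A(A,X)\cong X$ in $\D(k)$, so $A\in\mathcal{P}_X$ under (i); thickness then forces $\per(A)=\thick(A)\subseteq \mathcal{P}_X$, which is (ii). The implication (ii)$\Rightarrow$(iii) is obtained by specialising to $P=T$. For (iii)$\Rightarrow$(i) it suffices to observe that $T\in\mathcal{P}_X$ together with thickness of $\mathcal{P}_X$ forces $\per(A)=\thick(T)\subseteq \mathcal{P}_X$, hence $A\in\mathcal{P}_X$, and the identification $\h^*\RHom_A(A,X)\cong \h^*(X)$ then yields (i). As a byproduct, the "a" and "any" variants of (iii) are equivalent, via the cycle (iii)$_{\text{some }T}\Rightarrow$(i)$\Rightarrow$(ii)$\Rightarrow$(iii)$_{\text{any }T}$.

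The argument has no substantial obstacle: the only point that needs attention is the verification that $\mathcal{P}_X$ is closed under the three thick operations, and each of these reduces to the noetherianity of $k$. A minor care point is bookkeeping of the $k$-module structure on $\RHom_A(P,X)$ coming from the $k$-$A$-bimodule structure on $P$, but this is exactly what the statement singles out by the remark "Here we consider $P$ as a dg $k$-$A$-bimodule", and plays no further role beyond ensuring that the cohomology groups are indeed $k$-modules to which the Serre property can be applied.
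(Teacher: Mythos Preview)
Your proposal is correct and is precisely the d\'evissage argument the paper has in mind: the paper does not spell out a proof but simply states that the result ``is obtained by d\'evissage, similar to \cite[Lemmas 2.4(a)]{AKLY}'', and your thick-subcategory argument for $\mathcal{P}_X$ is exactly that.
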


Thanks to this description we have

\begin{Lem}
\label{FkeepcompactGkeepDbmod}
Let $A, B$ be dg $k$-algebras. Let $(F,G)$ be an adjoint pair of triangle functors between their derived categories:
~$\xymatrix{\D(A)\ar@<.7ex>[r]^F & \D(B) \ar@<.7ex>[l]^G}$.
 If $F(\per(A))\subset \per(B)$, then  $G(\D_{\fg}(B))\subset \D_{\fg}(A)$. If in addition $F(A)$ is a classical generator of $\per(B)$, then $G$ detects $\D_\fg$, \ie for $N\in\D(B)$, if $G(N)\in\D_{\fg}(A)$, then $N\in\D_{\fg}(B)$.
\end{Lem}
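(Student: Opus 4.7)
The plan is to reduce everything to the characterisation of $\D_{\fg}$ given in Lemma~\ref{lem:description-of-Dfd} and then push Hom-spaces across the adjunction $(F,G)$.

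For the first assertion, let $M\in\D_{\fg}(B)$. To show $G(M)\in\D_{\fg}(A)$, I will verify condition (ii) of Lemma~\ref{lem:description-of-Dfd}: for any $P\in\per(A)$, the graded $k$-module $\h^*\RHom_A(P,G(M))$ should be finitely generated. By the adjunction $(F,G)$, there is a natural isomorphism
\[
\RHom_A(P,G(M))\cong\RHom_B(F(P),M).
\]
The hypothesis $F(\per(A))\subseteq\per(B)$ gives $F(P)\in\per(B)$. Since $M\in\D_{\fg}(B)$, Lemma~\ref{lem:description-of-Dfd}(ii) applied on the $B$-side shows the right-hand side has finitely generated total cohomology over $k$. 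Thus $G(M)\in\D_{\fg}(A)$.

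For the second assertion, assume additionally that $T:=F(A)$ is a classical generator of $\per(B)$. Let $N\in\D(B)$ with $G(N)\in\D_{\fg}(A)$. By Lemma~\ref{lem:description-of-Dfd}(iii) applied to $B$ with the classical generator $T$, it suffices to prove that $\h^*\RHom_B(T,N)$ is finitely generated over $k$. Again by adjunction,
\[
\RHom_B(F(A),N)\cong \RHom_A(A,G(N)),
\]
and since $A$ is a (tautological) classical generator of $\per(A)$ and $G(N)\in\D_{\fg}(A)$, Lemma~\ref{lem:description-of-Dfd}(iii) applied on the $A$-side gives that this has finitely generated cohomology over $k$. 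Hence $N\in\D_{\fg}(B)$.

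There is no real obstacle here: the proof is essentially a bookkeeping exercise combining the adjunction isomorphism with the generator-based description of $\D_{\fg}$. The only point that deserves attention is to note that Lemma~\ref{lem:description-of-Dfd}(iii) applies to \emph{any} classical generator, which is what allows us to use $F(A)$ in the second part; this is the mild strengthening over \cite[Lemma 2.8]{AKLY} that the statement relies on.
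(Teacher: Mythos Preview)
Your proof is correct and follows essentially the same approach as the paper: push the graded Hom across the adjunction $(F,G)$ and invoke Lemma~\ref{lem:description-of-Dfd}. The only cosmetic difference is that the paper tests $G(N)\in\D_{\fg}(A)$ directly via condition (i) (computing $\h^*(G(N))=\bigoplus_p\Hom_{\D(A)}(A,G(N)[p])$) rather than verifying condition (ii) for all $P\in\per(A)$ as you do, but this is immaterial.
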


\begin{proof}
For $N\in \D(B)$, we have
 \[\h^*(G(N))=\bop_{p\in\Z}\Hom_{\D(A)}(A, G(N)[p])\cong\bop_{p\in\Z}\Hom_{\D(B)}(F(A), N[p]).\]
If $F(A)\in \per(B)$ and $N\in\D_{\fd}(B)$, then by Lemma~\ref{lem:description-of-Dfd} we see that this $k$-module is finitely generated, showing that $G(N)\in \D_{\fg}(A)$. If $F(A)$ is a classical generator of $\per(B)$ and $G(N)\in\D_{\fg}(A)$, \ie $\thick(F(A))=\per(B)$, then by Lemma~\ref{lem:description-of-Dfd} that $N\in\D_{\fg}(B)$.
\end{proof}

\smallskip

\subsubsection{The Nakayama functor}\

Assume that $k$ is a field and let $A$ be a dg $k$-algebra. Consider the \emph{Nakayama functor}
\[
\nu=\nu_A=D\cHom_A(?,A)\colon \C_{\dg}(A)\to \C_{\dg}(A)
\]
and its left derived functor
\[
\mathbf{L}\nu=\mathbf{L}\nu_A\colon\D(A)\to \D(A).
\]
It is clear that $\mathbf{L}\nu(A)=DA$. So $\mathbf{L}\nu$ restricts to a triangle functor
\[
\mathbf{L}\nu\colon\per(A)\to \thick(DA),
\]
which is a triangle equivalence if $\h^pA$ is finite-dimensional for any $p\in\mathbb{Z}$. Moreover, we have the Auslander--Reiten formula, \ie a bifunctorial isomorphism
\[
D\Hom_{\D(A)}(M,N)\cong \Hom_{\D(A)}(N,\mathbf{L}\nu(M))
\]
for $M\in\per(A)$ and $N\in\D(A)$ (See \cite[Section 10]{Keller94} or \cite[Section 2.3]{KalckYang16}).

\begin{Lem}
\label{lem:Nakayama-functor}
Let $A$ and $B$ be dg $k$-algebras and assume that there is an adjoint triple of triangle functors
\[
\xymatrixcolsep{4pc}\xymatrix{
\D(B) \ar[r]|{G} &\D(A).\ar@/_1pc/[l]_{F} \ar@/^1pc/[l]^{H}
  }
\]
\begin{itemize}
\item[(a)] $F$ restricts to a triangle functor $F|_{\per(A)}\colon\per(A)\to\per(B)$, and there is an isomorphism of triangle functors
\[
(H\circ\mathbf{L}\nu_A)|_{\per(A)}\cong(\mathbf{L}\nu_B\circ F)|_{\per(A)}\colon\per(A)\to\D(B).
\]
\item[(b)] $H$ restricts to a triangle functor $H|_{\thick(DA)}\colon \thick(DA)\to\thick(DB)$, and the following diagram is commutative up to isomorphism of triangle functors
\[
\xymatrix{
\per(B)\ar[d]^{\mathbf{L}\nu_B}  & \per(A)\ar[l]_F\ar[d]^{\mathbf{L}\nu_A}\\
\thick(DB) & \thick(DA)\ar[l]_H
}
\]
\item[(c)] If $F$ is a triangle equivalence, then $F|_{\thick(DA)}\colon\thick(DA)\to\thick(DB)$ is a triangle equivalence.
\end{itemize}
\end{Lem}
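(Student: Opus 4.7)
The plan is to push through all three parts using the two adjunctions combined with the Auslander--Reiten formula recalled just above the statement.

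\textbf{(a)} The restriction $F|_{\per(A)}\colon \per(A)\to\per(B)$ is formal: $G$ commutes with coproducts because it admits a right adjoint $H$, so its left adjoint $F$ preserves compact objects, and $\D(A)^c=\per(A)$, $\D(B)^c=\per(B)$. For the natural isomorphism $H\mathbf{L}\nu_A \cong \mathbf{L}\nu_B F$ on $\per(A)$, I would fix $M\in\per(A)$ and $N\in\D(B)$ and chain together the $(G,H)$-adjunction, the Auslander--Reiten formula for $M\in\per(A)$, the $(F,G)$-adjunction, and the Auslander--Reiten formula for $F(M)\in\per(B)$ (which uses the preservation of perfectness just established):
\begin{align*}
\Hom_{\D(B)}(N, H\mathbf{L}\nu_A M) & \cong \Hom_{\D(A)}(GN, \mathbf{L}\nu_A M) \cong D\Hom_{\D(A)}(M, GN) \\
& \cong D\Hom_{\D(B)}(FM, N) \cong \Hom_{\D(B)}(N, \mathbf{L}\nu_B FM).
\end{align*}
Yoneda then delivers the desired natural isomorphism.

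\textbf{(b)} Specialising (a) to $M=A$ gives $H(DA)\cong \mathbf{L}\nu_B F(A)\in \thick(DB)$, since $F(A)\in\per(B)$. As $H$ is triangulated and $DA$ generates $\thick(DA)$, this forces $H(\thick(DA))\subseteq\thick(DB)$. The commutativity of the displayed square is merely a reformulation of the isomorphism in (a).

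\textbf{(c)} When $F$ is a triangle equivalence, uniqueness of adjoints makes $G$ a quasi-inverse of $F$, and forces $H\cong F$. Thus $F|_{\thick(DA)}=H|_{\thick(DA)}$ lands in $\thick(DB)$ by (b) and is fully faithful because $F$ is. For essential surjectivity it is enough to place $DB$ in the image: since $G$ is also an equivalence, $G(B)\in\per(A)$, and applying (a) to $G(B)$ gives
\[
F(\mathbf{L}\nu_A G(B)) \cong H\mathbf{L}\nu_A G(B) \cong \mathbf{L}\nu_B FG(B) \cong \mathbf{L}\nu_B B = DB,
\]
so $DB$ lies in the (thick) essential image, which therefore contains all of $\thick(DB)$. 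I do not anticipate any real obstacle; the only point requiring mild care is that the Auslander--Reiten formula is legitimate only for perfect objects, which dictates the order of manipulations in (a) and explains why the proof of (c) goes via $G(B)\in\per(A)$ rather than via $B$ directly.
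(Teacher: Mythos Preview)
Your proposal is correct and matches the paper's proof essentially line by line: the same chain of adjunction/Auslander--Reiten isomorphisms for (a), the same specialisation to $M=A$ for (b), and the same use of $F\cong H$ plus hitting $DB$ via a perfect preimage of $B$ for (c). The paper phrases the last step as ``there exists $M\in\per(A)$ with $F(M)\cong B$'' whereas you name $M=G(B)$ explicitly, but this is the same object.
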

\begin{proof}
(a) First, by Lemma \ref{adjoint} below, $F$ sends compact objects to compact objects, namely, $F$ restricts to a triangle functor $F|_{\per(A)}\colon \per(A)\to\per(B)$.
Now there are isomorphisms for $M\in\per(A)$ and $N\in\D(B)$,
\begin{align*}
\Hom_{\D(B)}(N,H\circ\mathbf{L}\nu_A(M))&\cong \Hom_{\D(A)}(G(N),\mathbf{L}\nu_A(M))\\
&\cong D\Hom_{\D(A)}(M,G(N))\\
&\cong D\Hom_{\D(B)}(F(M),N)\\
&\cong \Hom_{\D(B)}(N,\mathbf{L}\nu_B\circ F(M)),
\end{align*}
which are functorial in $M$ and $N$ and commute with the shift functors.

(b) In view of (a) it suffices to show that $H(DA)\in\thick(DB)$, which is again by (a): $H(DA)=H\circ\mathbf{L}\nu_A(A)\cong\mathbf{L}\nu_B\circ F(A)\in\thick(DB)$.

(c) If $F$ is an equivalence, then $F\cong H$ and thus $F(\thick(DA))\subset\thick(DB)$ by (b). Of course, the restriction $F|_{\thick(DA)}\colon\thick(DA)\to\thick(DB)$ is fully faithful. Moreover, there exists $M\in\per(A)$ such that $B_B\cong F(M)$. So
\[
DB=\mathbf{L}\nu_B(B)\cong\mathbf{L}\nu_B\circ F(M)\cong F\circ\mathbf{L}\nu_A(M)\in F(\thick(DA)),
\]
which show that $F|_{\thick(DA)}\colon\thick(DA)\to\thick(DB)$ is an equivalence by \cite[Lemma 4.2]{Keller94}.
\end{proof}

\medskip

\subsection{Recollements}\

In this subsection, we recall and establish some basic results on recollements.
Let $\mathcal{T}$, $\mathcal{T'}$ and $\mathcal{T''}$ be triangulated $k$-categories.

\begin{Def}[{\cite{BBD,Parshall89,Koenig91}}]~\label{recollement}
\label{defn:recollement}
 A \emph{recollement} of $\mathcal{T}$ in terms of $\mathcal{T'}$ and $\mathcal{T''}$ is a diagram of six triangle functors
\begin{align}
\label{recol:defn}
\xymatrixcolsep{4pc}\xymatrix{\mathcal{T'} \ar[r]|{i_*} &\mathcal{T} \ar@/_1pc/[l]_{i^*} \ar@/^1pc/[l]^{i^!} \ar[r]|{j^*}  &\mathcal{T''}, \ar@/^-1pc/[l]_{j_!} \ar@/^1pc/[l]^{j_{*}}
  }
\end{align}
  which satisfies

  \begin{itemize}
    \item[(R1)] $(i^*,~i_*,~i^!)$ and $(j_!,~j^*,~j_*)$ are adjoint triples$;$
    \item[(R2)] $i_*,~j_*,~j_!$ are fully faithful$;$
    \item[(R3)] $j^*\circ i_*=0;$
    \item[(R4)] for any object $X$ in $\mathcal{T}$, there are two triangles$:$
    \[ \xymatrix@R=0.5pc{i_*i^!X \ar[r] &X \ar[r] & j_*j^*X \ar[r]& i_*i^!X[1]\\
    j_!j^*X \ar[r] &X \ar[r] & i_*i^*X \ar[r]& j_!j^*X[1]} \]
    where the left four morphisms are given by the units and conuits.
  \end{itemize}
The upper two rows
\begin{align*}
\xymatrixcolsep{4pc}\xymatrix{\mathcal{T'} \ar[r]|{i_*} &\mathcal{T} \ar@/_1pc/[l]_{i^*}  \ar[r]|{j^*}  &\mathcal{T''}, \ar@/^-1pc/[l]_{j_!}
  }
\end{align*}
is called a \emph{left recollemet} of $\mathcal{T}$ in terms of $\mathcal{T'}$ and $\mathcal{T''}$ if the four functors $i^*$, $i_*$, $j_!$ and $j^*$ satisfy the conditions in (R1)--(R4) involving them.  A \emph{right recollement} is defined similarly via the lower two rows.
\end{Def}

Notice that a right recollement of $\mathcal{T}$ in terms of $\mathcal{T}'$ and $\mathcal{T}''$ is exactly a left recollement of $\mathcal{T}$ in terms of $\mathcal{T}''$ and $\mathcal{T}'$.

\begin{Lem}
\label{lem:restricting-half-recollement}
Let
\begin{align*}
\xymatrixcolsep{4pc}\xymatrix{\mathcal{T'} \ar[r]|{i_*} &\mathcal{T} \ar@/_1pc/[l]_{i^*}  \ar[r]|{j^*}  &\mathcal{T''} \ar@/^-1pc/[l]_{j_!}
  }
\end{align*}
be a left recollement of $\mathcal{T}$ in terms of $\mathcal{T}'$ and $\mathcal{T}''$. Assume that $\mathcal{S}$, $\mathcal{S}'$ and $\mathcal{S}''$ are subcategories of $\mathcal{T}$, $\mathcal{T}'$ and $\mathcal{T}''$, respectively, such that $i_*(\mathcal{S}')\subset\mathcal{S}$, $j^*(\mathcal{S})\subset \mathcal{S}''$, $j_!(\mathcal{S}'')\subset \mathcal{S}$ and $i^*(\mathcal{S})\subset \mathcal{S}'$. Then the above left recollement restricts to a left recollement of $\mathcal{S}$ in terms of $\mathcal{S}'$ and $\mathcal{S}''$.
\end{Lem}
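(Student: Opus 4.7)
The plan is to verify conditions (R1)--(R4) of Definition~\ref{defn:recollement} (in their left-recollement version, involving only $i^*, i_*, j_!, j^*$) directly on the subcategories $\mathcal{S}', \mathcal{S}, \mathcal{S}''$. The key point is that everything is essentially formal: once the hypothesis tells us the four functors preserve the subcategories, the defining data of a recollement transfer automatically because the Hom-spaces in full subcategories coincide with those in the ambient triangulated categories.

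First I would set up the restricted functors. By assumption, $i^*, i_*, j_!, j^*$ send the relevant subcategories into one another, so each restricts to a triangle functor between the appropriate pair among $\mathcal{S}', \mathcal{S}, \mathcal{S}''$ (which I take to be triangulated subcategories). For (R1), the adjunction isomorphisms $\Hom_{\mathcal{T}}(Y, i_*X)\cong\Hom_{\mathcal{T}'}(i^*Y, X)$ and $\Hom_{\mathcal{T}}(j_!Z, Y)\cong\Hom_{\mathcal{T}''}(Z, j^*Y)$ restrict to the corresponding isomorphisms over $\mathcal{S}, \mathcal{S}', \mathcal{S}''$ because fullness of the subcategory inclusions means the Hom-spaces are unchanged. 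Condition (R2) is preserved because fully faithfulness of a functor is inherited by its restriction to any full subcategories of source and target, and (R3) is trivial because $j^*\circ i_*=0$ holds on all of $\mathcal{T}'$.

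The only point requiring the inclusion assumptions beyond what (R1)--(R3) need is the triangle in (R4). For $X\in\mathcal{S}$, the triangle in $\mathcal{T}$
\[
j_!j^*X \lra X \lra i_*i^*X \lra j_!j^*X[1]
\]
has $j^*X\in \mathcal{S}''$ and hence $j_!j^*X\in\mathcal{S}$, and $i^*X\in \mathcal{S}'$ and hence $i_*i^*X\in\mathcal{S}$; all three vertices lie in $\mathcal{S}$, so the triangle is a triangle in $\mathcal{S}$. The connecting morphisms remain the units and counits, since these are defined objectwise and their formation commutes with the fully faithful inclusions.

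I do not expect any real obstacle: the lemma is a bookkeeping statement whose entire content is that the hypothesis has been formulated precisely so that each defining datum of a left recollement survives restriction. The one mild subtlety is implicitly assuming that $\mathcal{S}, \mathcal{S}', \mathcal{S}''$ are triangulated subcategories, so that the restricted functors are triangle functors and (R4) gives a genuine triangle in $\mathcal{S}$; this is either tacit in the statement or immediate from the closure under cones that (R4) itself forces.
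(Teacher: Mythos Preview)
Your proposal is correct and is precisely the routine verification one would expect; the paper itself states the lemma without proof, so your argument fills in exactly what the authors left implicit. The only remark is that the subcategories $\mathcal{S},\mathcal{S}',\mathcal{S}''$ must indeed be triangulated subcategories for the restricted functors to be triangle functors and for (R4) to yield a triangle in $\mathcal{S}$, as you note at the end.
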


The following result is well-known, see \cite[1.4.4, 1.4.5, 1.4.8]{BBD} and \cite[Section 2]{Miyachi91}.

\begin{Prop}
\label{prop:recollement-vs-ses}
\begin{itemize}
\item[(a)] The two rows of a left recollement are short exact sequences of triangulated categories.

Conversely, assume that there is a short exact sequence of triangulated categories (possibly up to direct summands)
\[
\xymatrixcolsep{4pc}\xymatrix{\mathcal{T'} \ar[r]|{i_*} &\mathcal{T}   \ar[r]|{j^*}  &\mathcal{T''}.
  }
\]
Then $i_*$ has a left adjoint (respectively, right adjoint) if and only if $j^*$ has a left adjoint (respectively, right adjoint). In this case, $i_*$ and $j^*$ together with their left adjoints (respectively, right adjoints) form a left recollement (respectively, right recollement) of $\mathcal{T}$ in terms of $\mathcal{T}'$ and $\mathcal{T}''$.
\item[(b)]
Assume that there is a diagram
\begin{align*}
\xymatrixcolsep{4pc}\xymatrix{\mathcal{T'} \ar[r]|{i_*} &\mathcal{T} \ar@/_1pc/[l]_{i^*} \ar@/^1pc/[l]^{i^!} \ar[r]|{j^*}  &\mathcal{T''}, \ar@/^-1pc/[l]_{j_!} \ar@/^1pc/[l]^{j_{*}}
  }
\end{align*}
satisfying the condition (R1). If it is a recollement, then all the three rows are short exact sequences of triangulated categories. Conversely, if any one of the three rows is a short exact sequence of triangulated categories, then the diagram is a recollement.
\end{itemize}
\end{Prop}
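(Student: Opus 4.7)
The plan is to unpack the definitions and reduce everything to standard adjunction manipulations together with the two localization triangles in (R4) and the universal property of Verdier quotients.

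For the forward direction of (a), take the first row $\mathcal{T}' \xrightarrow{i_*} \mathcal{T} \xrightarrow{j^*} \mathcal{T}''$. Full faithfulness of $i_*$ is (R2) and $j^* \circ i_* = 0$ is (R3); it remains to check that $\bar{j^*}\colon \mathcal{T}/\mathrm{Im}(i_*) \to \mathcal{T}''$ is an equivalence. Essential surjectivity follows from $j^* j_* \cong \mathrm{id}_{\mathcal{T}''}$, which is equivalent to the full faithfulness of $j_*$ in (R2). For full faithfulness, I would first observe that $\mathrm{Im}(j_*)$ lies in the right orthogonal $\mathrm{Im}(i_*)^{\perp}$, since $\Hom_{\mathcal{T}}(i_* Z, j_* Y) \cong \Hom_{\mathcal{T}''}(j^* i_* Z, Y) = 0$ by (R3). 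The first triangle in (R4), $i_* i^! Y \to Y \to j_* j^* Y$, shows that $Y \to j_* j^* Y$ becomes invertible in the quotient (its cone is $i_* i^! Y[1] \in \mathrm{Im}(i_*)$), so $\Hom_{\mathcal{T}/\mathrm{Im}(i_*)}(X, Y) \cong \Hom_{\mathcal{T}}(X, j_* j^* Y)$. Applying $\Hom_{\mathcal{T}}(-, j_* j^* Y)$ to the second triangle $j_! j^* X \to X \to i_* i^* X$ and using the adjunction $(j_!, j^*)$ together with (R3) yields $\Hom_{\mathcal{T}}(X, j_* j^* Y) \cong \Hom_{\mathcal{T}''}(j^* X, j^* Y)$. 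The second row of the left recollement is handled symmetrically using the other triangle with $j_!$ and $i^*$ in place of $j_*$ and $i^!$.

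For the converse of (a), suppose a short exact sequence is given and $i_*$ admits a left adjoint $i^*$. The unit $X \to i_* i^* X$ extends to a triangle $F(X) \to X \to i_* i^* X$, and a direct computation (apply $\Hom(-, i_* A)$ and use the unit--counit identities to see that the induced map $\Hom(i_* i^* X, i_* A) \to \Hom(X, i_* A)$ is an isomorphism) shows $F(X) \in {}^{\perp}\mathrm{Im}(i_*)$. This exhibits $\mathrm{Im}(i_*)$ as an admissible subcategory with admissible complement ${}^{\perp}\mathrm{Im}(i_*)$, so the Verdier quotient functor $q\colon \mathcal{T} \to \mathcal{T}/\mathrm{Im}(i_*)$ admits a left adjoint given by the composition $\mathcal{T}/\mathrm{Im}(i_*) \xrightarrow{\sim} {}^{\perp}\mathrm{Im}(i_*) \hookrightarrow \mathcal{T}$. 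Composing with the equivalence $\bar{j^*}\colon \mathcal{T}/\mathrm{Im}(i_*) \xrightarrow{\sim} \mathcal{T}''$ supplies a left adjoint $j_!$ to $j^*$, and the axioms of a left recollement then hold by construction. The dual argument produces $j_*$ when $i_*$ has a right adjoint.

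For (b), the forward direction applies part (a) to the upper-two-row left recollement and to the lower-two-row right recollement, exhibiting all three rows as short exact sequences. For the converse, assume the diagram satisfies (R1) and some row is a short exact sequence. The converse of (a), applied with the adjoints already provided by (R1), produces both a left and a right recollement structure; by uniqueness of adjoints the constructed functors must coincide with those prescribed by (R1), upgrading the given data to a full recollement. The main obstacle is the bookkeeping of these compatibilities, in particular verifying that the triangles constructed via Bousfield (co)localization agree with those in (R4); this is routine given the uniqueness of orthogonal decompositions relative to the admissible subcategory $\mathrm{Im}(i_*)$.
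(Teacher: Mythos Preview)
The paper does not prove this proposition; it records it as well-known and cites \cite{BBD} and \cite{Miyachi91}. Your outline follows the standard route, but there is a genuine gap in the forward direction of (a): a \emph{left} recollement involves only the four functors $i^*,\,i_*,\,j_!,\,j^*$ and only the \emph{second} triangle $j_!j^*X\to X\to i_*i^*X$ of (R4). Your argument for the row $\mathcal{T}'\xrightarrow{i_*}\mathcal{T}\xrightarrow{j^*}\mathcal{T}''$ invokes $j_*$, $i^!$, and the first triangle of (R4), none of which are part of the left-recollement data. The repair is to work with the left orthogonal instead of the right one: from $\Hom(j_!Z,i_*W)\cong\Hom(Z,j^*i_*W)=0$ one has $\mathrm{Im}(j_!)\subset{}^{\perp}\mathrm{Im}(i_*)$, hence for $W\in\mathrm{Im}(j_!)$ the map $\Hom_{\mathcal{T}}(W,Y)\to\Hom_{\mathcal{T}/\mathrm{Im}(i_*)}(W,Y)$ is bijective for all $Y$; the second triangle then gives $\Hom_{\mathcal{T}/\mathrm{Im}(i_*)}(X,Y)\cong\Hom_{\mathcal{T}}(j_!j^*X,Y)\cong\Hom_{\mathcal{T}''}(j^*X,j^*Y)$. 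Essential surjectivity likewise comes from $j^*j_!\cong\mathrm{id}$ (full faithfulness of $j_!$), not from $j^*j_*$.

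A second, smaller gap: in the converse of (a) you only argue one direction of the ``if and only if'' (a left adjoint of $i_*$ yields a left adjoint of $j^*$). The reverse implication needs its own argument: given a left adjoint $j_!$ of $j^*$, form the triangle $j_!j^*X\to X\to G(X)$ from the counit; since $j^*j_!\cong\mathrm{id}$ one gets $j^*G(X)=0$, so $G(X)\in\ker(j^*)=\mathrm{Im}(i_*)$, and $X\mapsto i_*^{-1}G(X)$ provides the left adjoint of $i_*$.
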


We say that a recollement \eqref{recol:defn} \emph{extends one step downwards}  if both $i^!$ and $j_*$ have right adjoints. In this case, the diagram extends to four rows and the lower three rows, by Proposition~\ref{prop:recollement-vs-ses}, also form a recollement. This means that the four rows form a ladder of height 2 in the sense of \cite{BGS,AKLY}. Similarly, we have the notion of extending the recollement one step upwards.

\smallskip

\subsubsection{Recollements of compactly generated triangulated categories}\

Let $\mathcal{T}$ and $\mathcal{T}'$ be triangulated $k$-categories with infinite direct sums and assume that they are compactly generated.

\begin{Lem}[{\cite[Theorems 4.1 and 5.1]{Neeman96}}]
\label{adjoint}
The following conditions are equivalent for an adjoint pair of triangle functors $\xymatrix{\mathcal{T}'\ar@<-.7ex>[r]_G & \mathcal{T}\ar@<-.7ex>[l]_F}$:
\begin{itemize}
\item[(a)] $F$ sends compact objects to compact objects;
\item[(b)] $G$ preserves infinite direct sums;
\item[(c)] $G$ has a right adjoint.
\end{itemize}
\end{Lem}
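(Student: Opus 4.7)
The plan is to extract all three equivalences from the adjunction isomorphism
\[
\Hom_{\mathcal{T}'}(F(X),Y)\cong\Hom_{\mathcal{T}}(X,G(Y)),\qquad X\in\mathcal{T},\ Y\in\mathcal{T}',
\]
supplemented, for the step involving (c), by Neeman's Brown representability theorem. Throughout I would use the standard fact that in a compactly generated triangulated category a morphism $f$ is an isomorphism if and only if $\Hom(X,f[n])$ is an isomorphism for every compact object $X$ and every $n\in\mathbb{Z}$.

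For (a)$\Rightarrow$(b), I would pick a family $\{Y_i\}_{i\in I}$ in $\mathcal{T}'$ and test the canonical morphism $\coprod_i G(Y_i)\to G(\coprod_i Y_i)$ against an arbitrary compact $X\in\mathcal{T}^c$. The left-hand side becomes $\coprod_i\Hom(X,G(Y_i))\cong\coprod_i\Hom_{\mathcal{T}'}(F(X),Y_i)$ by compactness of $X$ and the adjunction; the right-hand side becomes $\Hom_{\mathcal{T}'}(F(X),\coprod_i Y_i)$ by the adjunction, and (a) supplies compactness of $F(X)$, making this coincide with $\coprod_i\Hom_{\mathcal{T}'}(F(X),Y_i)$. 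Since $\mathcal{T}^c$ generates $\mathcal{T}$, the canonical morphism is an isomorphism. For (b)$\Rightarrow$(a), run the same chain of isomorphisms starting from $\Hom_{\mathcal{T}'}(F(X),\coprod_i Y_i)$: the adjunction moves things inside $G$, assumption (b) pulls the coproduct out of $G$, and compactness of $X$ in $\mathcal{T}$ pulls the coproduct out of $\Hom(X,-)$, yielding compactness of $F(X)$.

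For (c)$\Rightarrow$(b) I would simply note that a functor with a right adjoint preserves all colimits, in particular coproducts. The remaining direction (b)$\Rightarrow$(c) is the content of Brown representability in the form proved by Neeman: any coproduct-preserving triangle functor out of a compactly generated triangulated category admits a right adjoint. Since $\mathcal{T}'$ is compactly generated by hypothesis, this applies directly to $G$.

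The only substantive step is (b)$\Rightarrow$(c), which genuinely requires Brown representability. As the lemma is explicitly attributed to \cite{Neeman96}, I would invoke that reference rather than reproduce the argument (which would entail representing the presheaf $Z\mapsto\Hom_{\mathcal{T}}(Z,-)$ pushed through $G$ and building the right adjoint as a Bousfield-type localisation). The remaining implications are formal manipulations with the adjunction and compact generation, and do not present any real difficulty.
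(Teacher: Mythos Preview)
Your argument is correct and is precisely the standard one. The paper itself gives no proof at all for this lemma, simply citing \cite[Theorems 4.1 and 5.1]{Neeman96}; your sketch is exactly the content of those results, so there is nothing further to compare.
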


Let $\mathcal{T},~\mathcal{T}'$ and $\mathcal{T}''$ be triangulated $k$-categories with infinite direct sums and assume that they are compactly generated.
Assume that there is a recollement \eqref{recol:defn}. Then by Lemma~\ref{adjoint}, both $j_!$ and $i^*$ send compact objects to compact objects. By \cite[Theorem 2.1]{Neeman92a}, $i^*$ sends a set of compact generators of $\mathcal{T}$ to a set of compact generators of $\mathcal{T}'$. Moreover, by \cite[Lemma 4.2]{AKLY}, the functor $j_!$ detects compact objects, that is, if $j_!(Z)$ is compact, then $Z$ is compact.  The following is a consequence of Proposition~\ref{prop:recollement-vs-ses} and Lemma~\ref{adjoint}.

\begin{Lem}
\label{lem:extending-recollement-downward}
Assume that there is a recollement \eqref{recol:defn}. Then the following conditions are equivalent:
\begin{itemize}
\item[(i)] $i_*$ sends compact objects to compact objects,
\item[(ii)] $j^*$ sends compact objects to compact objects,
\item[(iii)] $i^!$ has a right adjoint,
\item[(iv)] $j_*$ has a right adjoint,
\item[(v)] the recollement extends one step downwards.
\end{itemize}
\end{Lem}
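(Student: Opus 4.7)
The plan is to reduce the five-term equivalence to two core equivalences furnished by Lemma~\ref{adjoint} together with a direct triangle argument relating (i) and (ii). More precisely, $(i)\Leftrightarrow (iii)$ follows by applying Lemma~\ref{adjoint} to the adjoint pair $(i_*,i^!)$, in which $i^!$ is the right adjoint; and $(ii)\Leftrightarrow (iv)$ follows by applying it to $(j^*,j_*)$. The equivalence $(v)\Leftrightarrow (iii)\wedge (iv)$ is merely the definition of ``extends one step downwards'' recalled just before the lemma, so nothing is to be proved there (Proposition~\ref{prop:recollement-vs-ses} then guarantees, as a bonus, that the ensuing data assembles into a ladder of height two).

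It therefore remains to prove $(i)\Leftrightarrow (ii)$, and for this I will use the ingredients recorded in the paragraph preceding the lemma: $i^*$ and $j_!$ preserve compactness, $j_!$ \emph{detects} compactness, and $i^*$ carries a set of compact generators of $\mathcal{T}$ to a set of compact generators of $\mathcal{T}'$, so that $\mathcal{T}'^c=\thick(i^*(\mathcal{T}^c))$. Combined with the two recollement triangles from (R4), these facts will be enough.

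For $(i)\Rightarrow (ii)$, fix $X\in\mathcal{T}^c$ and consider the triangle
\[
j_!j^*X\to X\to i_*i^*X\to j_!j^*X[1].
\]
Here $X$ is compact, $i^*X\in\mathcal{T}'^c$, so $i_*i^*X\in\mathcal{T}^c$ by $(i)$; hence $j_!j^*X$ is compact in $\mathcal{T}$, and since $j_!$ detects compactness we conclude $j^*X\in\mathcal{T}''^c$. For $(ii)\Rightarrow (i)$, I will use that $\mathcal{T}'^c=\thick(i^*(\mathcal{T}^c))$ and that $i_*$ is a triangle functor, so it suffices to prove $i_*i^*X\in\mathcal{T}^c$ for every $X\in\mathcal{T}^c$; reusing the same triangle, $j^*X$ is compact by $(ii)$, hence $j_!j^*X$ is compact because $j_!$ preserves compactness, and compactness of $i_*i^*X$ then follows from compactness of $X$ and $j_!j^*X$.

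The only step requiring any real care, I expect, is the reduction in $(ii)\Rightarrow (i)$ from ``$i_*$ preserves compactness on $i^*(\mathcal{T}^c)$'' to ``$i_*$ preserves compactness on all of $\mathcal{T}'^c$'', which rests on the identification $\mathcal{T}'^c=\thick(i^*(\mathcal{T}^c))$ coming from Neeman's theorem already invoked in the text. Everything else amounts to routine manipulation of the recollement triangles and applications of Lemma~\ref{adjoint}.
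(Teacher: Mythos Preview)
Your proof is correct. The paper does not spell out a proof, merely stating that the lemma is ``a consequence of Proposition~\ref{prop:recollement-vs-ses} and Lemma~\ref{adjoint}.'' The intended argument is presumably: $(i)\Leftrightarrow(iii)$ and $(ii)\Leftrightarrow(iv)$ via Lemma~\ref{adjoint} (exactly as you do), and then $(iii)\Leftrightarrow(iv)$ via Proposition~\ref{prop:recollement-vs-ses}(a) applied to the third row $\mathcal{T}''\xrightarrow{j_*}\mathcal{T}\xrightarrow{i^!}\mathcal{T}'$, which is a short exact sequence by Proposition~\ref{prop:recollement-vs-ses}(b); condition $(v)$ is then $(iii)\wedge(iv)$ by definition.

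The difference is only in how you close the cycle. Rather than invoking Proposition~\ref{prop:recollement-vs-ses} to get $(iii)\Leftrightarrow(iv)$, you prove $(i)\Leftrightarrow(ii)$ directly from the canonical triangle $j_!j^*X\to X\to i_*i^*X\to j_!j^*X[1]$, together with the facts (recorded just before the lemma) that $i^*$ and $j_!$ preserve compactness, that $j_!$ detects compactness, and that $\mathcal{T}'^c=\thick(i^*(\mathcal{T}^c))$ by Neeman's theorem. This is a perfectly good alternative; it is slightly more hands-on and uses a bit more of the ambient machinery (Neeman's theorem and \cite[Lemma 4.2]{AKLY}), whereas the paper's route is more uniform and needs only the two cited results. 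Either way the argument is short and the conclusions coincide.
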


\smallskip

\subsubsection{Recollement generated by a homological epimorphism}\

Let $A$ and $B$ be dg $k$-algebras and $f\colon A\rightarrow B$ be a dg algebra homomorphism. Then $B$ is a dg $B$-$A$-bimodule as well as a dg $A$-$B$-bimodule via $f$, and induces a triangle functor $f^*=?\lten_B B=\RHom_B(B,?)\colon\D(B)\rightarrow\D(A)$. The following is a special case of Lemma~\ref{lem:generating-adjoint-triple}.

\begin{Cor}
\label{cor:adjoint-triple-induced-by-a-homomorphism}
There is an adjoint triple of triangle functors
\[
\xymatrixcolsep{5pc}\xymatrix{
\D(B)  \ar[r]|{f^*} &\D(A). \ar@/_1pc/[l]_{?\lten_A B} \ar@/^1pc/[l]^{\RHom_A(B,?)}
  } \]
We call this triple the adjoint triple induced by $f$.
\end{Cor}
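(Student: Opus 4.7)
As the statement itself notes, this is a specialization of Lemma~\ref{lem:generating-adjoint-triple}. The plan is to apply that lemma with the roles of $A$ and $B$ interchanged, taking the chosen bimodule to be $B$ itself equipped with the natural $A$-$B$-bimodule structure induced by $f$ (left $A$-action through $f$, right $B$-action by multiplication).

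Next I would verify the (interchanged) hypotheses: that this bimodule is $\K$-projective and perfect on the right. Both are immediate, since the underlying right dg $B$-module is simply $B_B$, which is free of rank one, hence $\K$-projective, and clearly belongs to $\per(B)=\thick(B_B)$.

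It then remains to match the three functors of the resulting triple with those displayed in the corollary. The relevant transpose is $\cHom_B(B,B)$, which is canonically isomorphic to $B$ regarded as a $B$-$A$-bimodule (left $B$-action the regular one, right $A$-action through $f$). Substituting these identifications into the (interchanged) conclusion of Lemma~\ref{lem:generating-adjoint-triple} yields the adjoint triple $(?\lten_A B)\dashv (?\lten_B B)\dashv \RHom_A(B,?)$, and identifying $?\lten_B B\colon\D(B)\to\D(A)$ with $f^*$ via the canonical isomorphism $N\lten_B B\cong N$ of dg $A$-modules (where the $A$-action is restriction of scalars along $f$) completes the argument. I do not anticipate any genuine obstacle: the proof is pure bookkeeping, the only subtlety being the need to keep the left/right bimodule sides consistent when invoking the previous lemma with its two algebras swapped.
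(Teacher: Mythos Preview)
Your proposal is correct and matches the paper's approach exactly: the paper gives no proof beyond the sentence ``The following is a special case of Lemma~\ref{lem:generating-adjoint-triple},'' and you have simply spelled out that specialization (take $X={}_AB_B$ with the left $A$-action via $f$, note $B_B$ is free hence $\K$-projective and perfect, and identify $X^{\tr}\cong {}_BB_A$). There is nothing to add.
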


The following definition appears right after \cite[Section 4, Lemma]{NicolasSaorin09}. For algebras, the notion was introduced in~\cite{GL91}.

\begin{Def}[{\cite[Section 4]{NicolasSaorin09}}]
The dg algebra homomorphism $f$ is called a \emph{homological epimorphism} if the triangle functor $f^*\colon\D(B)\rightarrow\D(A)$ is fully faithful.
\end{Def}

\begin{Lem}[{\cite[Section 4]{NicolasSaorin09}}]
\label{lem:recollement-induced-by-homological-epimorphism}
Assume that $f$ is a homological epimorphism. Then there is a recollement:
\[ \xymatrixcolsep{4pc}\xymatrix{\D(B) \ar[r]|{i_*} &\D(A) \ar@/_1pc/[l]_{i^*} \ar@/^1pc/[l]^{i^!} \ar[r]  &\Loc(X), \ar@/^-1pc/[l]_{j_!} \ar@/^1pc/[l]
  } \] 
where $j_!$ is the inclusion, the left half is induced by $f$,  and $X$ is defined by the triangle
\[
\xymatrix{
X\ar[r] & A\ar[r]^f & B\ar[r] & X[1].
}
\]
Assume further that $B_A\in\per(A)$ and that $X_A$ is $\K$-projective. Then there is a recollement
\begin{equation}
\label{equ:end(X)}
\xymatrixcolsep{4pc}\xymatrix{\D(B) \ar[r]|{i_*} &\D(A) \ar@/_1pc/[l]_{i^*} \ar@/^1pc/[l]^{i^!} \ar[r]  &\D(\cEnd_A(X)), \ar@/^-1pc/[l]_{j_{!=}?\lten_{\cEnd_A(X)}X} \ar@/^1pc/[l]
  }
\end{equation}
which extends one step downwards. 
\end{Lem}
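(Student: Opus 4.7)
\smallskip
\noindent\textbf{Proof plan.} The plan for the first half is to assemble the adjoint triple supplied by Corollary~\ref{cor:adjoint-triple-induced-by-a-homomorphism} into a recollement using Proposition~\ref{prop:recollement-vs-ses}. Set $(i^*,i_*,i^!):=(?\lten_A B,\,f^*,\,\RHom_A(B,?))$; the hypothesis that $f$ is a homological epimorphism is precisely the statement that $i_*$ is fully faithful. The only real work is to identify $\ke(i^*)$ with $\Loc(X)$, since once this is done Brown representability for the compactly generated category $\D(A)$ produces successive right adjoints to the inclusion $j_!\colon\Loc(X)\hookrightarrow\D(A)$, giving the desired $j^*$ and $j_*$; then Proposition~\ref{prop:recollement-vs-ses}(a) assembles the six functors into a recollement.

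For the inclusion $\Loc(X)\subseteq\ke(i^*)$, I would tensor the defining triangle $X\to A\xrightarrow{f}B\to X[1]$ on the right over $A$ with $B$. Since $f$ is a homological epimorphism, $B\lten_A B\cong B$ with the structure map $f$ becoming an isomorphism, which forces $X\lten_A B=0$. Hence $X\in\ke(i^*)$, and because $\ke(i^*)$ is localising in $\D(A)$, we get $\Loc(X)\subseteq\ke(i^*)$. Conversely, for $Y\in\ke(i^*)$, tensoring the same triangle on the left with $Y$ yields a triangle $Y\lten_A X\to Y\to Y\lten_A B=0$, so that $Y\cong (Y\lten_A X)[1]$. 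Since $A_A$ generates $\D(A)$ as a localising subcategory and $A\lten_A X=X$, the triangle functor $?\lten_A X$ sends all of $\D(A)$ into $\Loc(X)$, giving $Y\in\Loc(X)$ as required.

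For the second half, I would first obtain the equivalence $\Loc(X)\simeq\D(\cEnd_A(X))$ from Lemma~\ref{lem:compact-object-induces-embedding}. The triangle $X\to A\to B$ places $X$ in $\per(A)$ since $\per(A)$ is thick and $B_A\in\per(A)$ by hypothesis; together with $X_A$ being $\K$-projective this makes $?\lten_{\cEnd_A(X)}X\colon\D(\cEnd_A(X))\to\D(A)$ fully faithful, and its essential image is closed under triangles and arbitrary coproducts and contains $X$, hence equals $\Loc(X)$. Under this equivalence the functor $j_!$ of \eqref{equ:end(X)} is the one indicated; $j^*$ and $j_*$ are supplied by the adjoint triple of Lemma~\ref{lem:generating-adjoint-triple} applied to the bimodule ${}_{\cEnd_A(X)}X_A$.

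Finally, to see that the recollement extends one step downwards, I would invoke Lemma~\ref{lem:extending-recollement-downward}, for which it suffices that $i_*=f^*$ sends compact objects to compact objects. Since $\per(B)=\thick(B_B)$, this reduces to the single check that $i_*(B_B)=B_A\in\per(A)$, which is exactly the hypothesis. The main obstacle in the whole argument is the reverse inclusion $\ke(i^*)\subseteq\Loc(X)$ in Part~1; everything else is a routine assembly of the general results recalled earlier in the section.
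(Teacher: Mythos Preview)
Your argument is correct and matches the paper's: the paper simply cites \cite{NicolasSaorin09} for the first recollement (you spell out the standard identification $\ke(i^*)=\Loc(X)$), and for the second statement both you and the paper proceed identically via Lemma~\ref{lem:compact-object-induces-embedding} and Lemma~\ref{lem:extending-recollement-downward}.

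One small imprecision worth cleaning up: Brown representability produces right adjoints to coproduct-preserving functors \emph{out of} a compactly (or well-) generated category, so to get $j^*$ as right adjoint to $j_!\colon\Loc(X)\hookrightarrow\D(A)$ you would need it for $\Loc(X)$, not for $\D(A)$. You can sidestep this entirely: once $\ke(i^*)=\Loc(X)$ is established, the sequence $\Loc(X)\xrightarrow{j_!}\D(A)\xrightarrow{i^*}\D(B)$ is short exact (since $i^*$ has the fully faithful right adjoint $i_*$), and Proposition~\ref{prop:recollement-vs-ses}(a) applied twice---first using that $i^*$ has the right adjoint $i_*$, then that $i_*$ has the right adjoint $i^!$---yields $j^*$ and $j_*$ directly. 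Also, when you tensor the defining triangle on the left with $Y$, you are implicitly using that $X\to A\xrightarrow{f}B$ is a triangle of $A$-$A$-bimodules; this is true since $f$ is a bimodule map, but is worth saying.
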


\begin{proof}
The first statement appears right after the definition of homological epimorphism in \cite[Section 4]{NicolasSaorin09}. If $B_A\in\per(A)$, then $X_A\in\per(A)$. It follows from Lemma~\ref{lem:compact-object-induces-embedding} that $?\lten_{\cEnd_A(X)} X\colon\D(\cEnd_A(X))\to \D(A)$ is fully faithful with essential image $\Loc(X)$. Then the existence of the recollement follows from the first statement. Moreover, in this case $i_*$ sends compact objects to compact objects, so by Lemma~\ref{lem:extending-recollement-downward}, the recollement extends one step downwards.
\end{proof}

\smallskip

\subsubsection{Recollement for a triangular matrix algebra}\

The following generalises \cite[Example 3.4]{AKLY}.

\begin{Lem}
\label{lem:ladder-for-matrix-algebra}
Let $B$ and $C$ be dg $k$-algebras, $M$ be a dg $C$-$B$-bimodule, and let $A$ be the matrix algebra
\[
A=\left( \begin{array}{cc} B & 0\\ M & C\end{array}\right).
\]
Then there is a recollement of derived categories:
\begin{equation} \label{equ:matrix}
\xymatrixcolsep{4pc}
\xymatrix{\D(B) \ar[r]|{i_*} &\D(A) \ar@/_1pc/[l]_{i^*} \ar@/^1pc/[l]^{i^!} \ar[r]|{j^*}  &\D(C) \ar@/^-1pc/[l]_{j_!} \ar@/^1pc/[l]^{j_*}
  } \end{equation}
where the left half is induced by the cononical homomorphism $A\to B$ and the right half is induced by the dg $C$-$A$-bimodule $(M,C)$. Moreover,
\begin{itemize}
\item[(a)]
this recollement extends one step downwards;
\item[(b)] if $M_B\in\per(B)$, then it extends two steps downards;
\item[(c)] if ${}_CM\in\per(C^{\oop})$ and ${}_CM$ is $\K$-projective, then it extends one step upwards.
\end{itemize}
\end{Lem}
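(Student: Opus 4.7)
The plan is to realise the recollement \eqref{equ:matrix} via Lemma~\ref{lem:recollement-induced-by-homological-epimorphism}. Set $e_1=\left(\begin{smallmatrix} 1 & 0\\ 0 & 0\end{smallmatrix}\right)$ and $e_2=\left(\begin{smallmatrix} 0 & 0\\ 0 & 1\end{smallmatrix}\right)$ in $A$. Since the upper-right block of $A$ vanishes, $e_1Ae_2=0$, and the two-sided ideal $Ae_2A$ therefore coincides with the right ideal $e_2A$; the quotient is precisely the canonical projection $f\colon A\to B$. As a right $A$-module, $B$ is isomorphic to the idempotent summand $e_1A$ of $A$, so $B_A$ is $\K$-projective and in $\per(A)$, and the computation $B\lten_A B=e_1A\otimes_A B=e_1\cdot B=B$ (using $f(e_1)=1_B$) shows that $f$ is a homological epimorphism.

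I then apply Lemma~\ref{lem:recollement-induced-by-homological-epimorphism} with $X=e_2A$, noting that $X_A$ is $\K$-projective as a direct summand of $A_A$ and that $\cEnd_A(e_2A)=e_2Ae_2=C$. This produces the recollement \eqref{equ:matrix} with $j_!=\,?\lten_C e_2A$, extending one step downwards. Unwinding the construction, the left half is induced by $f$ in the sense of Corollary~\ref{cor:adjoint-triple-induced-by-a-homomorphism}, and the right half is the adjoint triple induced by the $C$-$A$-bimodule $e_2A=(M,C)$ in the sense of Lemma~\ref{lem:generating-adjoint-triple}. This proves (a).

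For (b), by Lemma~\ref{lem:extending-recollement-downward} applied to the recollement formed by the lower three rows of the extended ladder, the assertion reduces to showing that $i^!$ preserves compact objects. Applying $\RHom_A(-,Y)$ to the triangle $e_2A\to A\to B\to e_2A[1]$ and using $\RHom_A(e_2A,Y)=Ye_2$ gives $i^!(Y)=\RHom_A(B,Y)\cong \mathrm{fib}(Y\to Ye_2)=Ye_1$ for $Y\in\D(A)$; in particular $i^!(A)=Ae_1\cong B\oplus M$ as right $B$-modules. Under the assumption $M_B\in\per(B)$, this lies in $\per(B)$, and since $A$ generates $\per(A)$ as a thick subcategory, $i^!$ maps $\per(A)$ into $\per(B)$.

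For (c), by the upward analogue of Lemma~\ref{lem:extending-recollement-downward}, equivalently by Proposition~\ref{prop:recollement-vs-ses}(a) applied to the short exact sequence formed by the top row of the recollement, it suffices to produce a left adjoint of $j_!=\,?\lten_C e_2A$. As a left $C$-module, $e_2A$ decomposes as $M\oplus C$, so the hypotheses that ${}_CM$ is $\K$-projective and perfect over $C^{\oop}$ pass to ${}_C(e_2A)$, and Lemma~\ref{lem:left-adjoint-of-tensor} supplies the required left adjoint. The main subtlety is conceptual rather than computational: one must spot the identity $Ae_2A=e_2A$ that makes $f$ a homological epimorphism with the transparent fibre $e_2A$, and then align each of (b), (c) with the correct compactness or adjoint-existence criterion on the ladder.
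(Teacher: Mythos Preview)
Your proof is correct. The main difference from the paper is in how you establish the recollement and part (a): you observe that $f\colon A\to B$ is a homological epimorphism with $B_A\cong e_1A\in\per(A)$ and fibre $X=e_2A$, then invoke Lemma~\ref{lem:recollement-induced-by-homological-epimorphism} to obtain the recollement together with its one-step downward extension in one stroke. The paper instead verifies the left-recollement axioms by hand: it checks that $j_!$ and $i_*$ are fully faithful via Lemma~\ref{lem:compact-object-induces-embedding}, builds the canonical triangle $j_!j^*(N)\to N\to i_*i^*(N)\to\cdots$ from the bimodule triangle $Ae\lten_{eAe}eA\to A\to B\to\cdots$, and then appeals to Proposition~\ref{prop:recollement-vs-ses} and Lemma~\ref{lem:extending-recollement-downward} separately for (a). Your route is shorter and makes the structural reason (homological epimorphism with perfect cofibre) more visible; the paper's route is more self-contained and shows explicitly where the canonical triangle comes from. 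For (b) and (c) your argument is essentially identical to the paper's, though your computation of $i^!(A)$ via the triangle is slightly roundabout---one can simply write $i^!(A)=\RHom_A(e_1A,A)\cong Ae_1$ directly, as the paper does.
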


\begin{proof}
We first show that the diagram~\eqref{equ:matrix} is a recollement. By Proposition~\ref{prop:recollement-vs-ses}, it is enough to show that the upper two rows form a left recollement.

Let $e=\begin{pmatrix}\begin{smallmatrix} 0 & 0\\ 0 & 1_C \end{smallmatrix}\end{pmatrix}$,
 an idempotent of $A$. Then there are canonical isomorphisms $eAe\cong C$ and $A/AeA\cong B\cong (1-e)A(1-e)$ of dg $k$-algebras, $(M,C)\cong eA$ of dg $C$-$A$-bimodules, and $B\cong (1-e)A$ of dg $B$-$A$-bimodules. So by Lemma~\ref{lem:compact-object-induces-embedding}, we see that both $j_!$ and $i_*$ are fully faithful.
Since $Ae\cong eAe$ as dg $C$-modules and $eA\cong AeA$ as dg $A$-modules, the canonical morphism $Ae \lten_{eAe}eA=Ae\ten_{eAe}eA \to AeA$ is an isomorphism. Thus there is the following triangle in $\D(A^{\oop}\ten_k A)$:
\[
\xymatrix{
Ae\lten_{eAe}eA\ar[r] & A \ar[r] & B\ar[r] & Ae\lten_{eAe}eA[1]
}
\]
Thus for any dg $A$-module $M$ there is a triangle in $\D(A)$:
\[
\xymatrix@R=0.5pc{
M\lten_A (Ae\lten_{eAe}eA)\ar[r] & M \ar[r] & M\lten_A B\ar[r] & M\lten_A (Ae\lten_{eAe}eA)[1].
}
\]
It is clear that $M\lten_A B=i_*i^*(M)$. Since $Ae\lten_{eAe}eA\cong Ae\ten_{eAe}eA$, it follows that $M\lten_A (Ae\lten_{eAe}eA)\cong (M\lten_A Ae)\lten_{eAe}eA=j_!j^*(M)$.
This shows that the upper two rows of the diagram form a left recollement.

(a) We have $j^{*}(A)=Ae\cong eAe$, implying that $j^*$ sends compact objects to compact objects, so the recollement extends one step downwards
by Lemma~\ref{lem:extending-recollement-downward}.

(b) We have $i^!(A)=\RHom_A(B_A,A)\cong\left(\begin{array}{c}B\\ M\end{array}\right)\in\per(B)$, implying that $i^!$ sends compact objects to compact objects, so the recollement extends two steps downwards, by (a) and  Lemma~\ref{lem:extending-recollement-downward}.

(c) In this case, ${}_C(M,C)\in\per(C^{\oop})$ and ${}_C(M,C)$ is $\K$-projective, so $j_!$ has a left adjoint, by Lemma~\ref{lem:left-adjoint-of-tensor}. It follows that the recollement extends one step upwards, by Proposition~\ref{prop:recollement-vs-ses}.
\end{proof}

\smallskip

\subsubsection{Recollement generated by an idempotent}\

Let $A$ be non-positive dg $k$-algebra (that is, $A^p=0$ for all $p>0$), $\K$-flat as a complex of $k$-modules, and $e\in A$ be an idempotent, \ie $e^2=e$. The first statement of the following result is a generalisation of \cite[Proposition 6.1]{KalckYang18b}.

\vspace{1em}
  \begin{Prop}~\label{id}
  \label{prop:recollement-induced-by-idempotent}
  Let $A$ and $e \in A$ as above. Then there is a dg $k$-algebra $B$ with a homological epimorphism of dg $k$-algebra $f\colon A\rightarrow B$, and a recollement of derived categories:
   \[ \xymatrixcolsep{4pc}\xymatrix{\D(B) \ar[r]|{i_*} &\D(A) \ar@/_1pc/[l]_{i^*} \ar@/^1pc/[l]^{i^!} \ar[r]|{j^*}  &\D(eAe) \ar@/^-1pc/[l]_{j_!} \ar@/^1pc/[l]^{j_*}
  } \] 
such that the following conditions are satisfied:
\begin{itemize}
\item[(a)] the left half is induced by $f$, and the right half is induced by the dg $eAe$-$A$-bimodule $eA$;
\item[(b)] $B$ is a non-positive dg algebra;
\item[(c)] $\h^0(B)$ is isomorphic to $\h^0(A)/\h^0(A)e\h^0(A)$.
\end{itemize}
Moreover, if $Ae_{eAe}\in\per(eAe)$, then this recollement extends one step downwards.
\end{Prop}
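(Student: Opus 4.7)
My plan is to construct $B$ via a Drinfeld-style DG quotient of $A$ by the idempotent $e$, and then assemble the recollement. Concretely, adjoin to $A$ a generator $h$ of degree $-1$ placed in the corner (so that $h = ehe$) with $dh = e$ — freely over $A$ in the category of non-positive dg $k$-algebras — together with whatever further negative-degree null-homotopies are needed to ensure that the resulting map $f\colon A\to B$ is a homological epimorphism. Only negative-degree generators are adjoined, so $B$ is non-positive, giving (b). Passing to $\h^0$, the differential $B^{-1}\to B^0$ additionally sends $xhy\mapsto xey$ for $x,y\in A^0$, whence $\h^0(B)\cong\h^0(A)/\h^0(A)e\h^0(A)$, giving (c).

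For the right half of the recollement, $eA$ is a direct summand of $A_A$, hence $\K$-projective and lies in $\per(A)$, and $\cEnd_A(eA)\cong eAe$ as dg $k$-algebras. Lemma~\ref{lem:generating-adjoint-triple} then supplies the adjoint triple $(j_!, j^*, j_*) = (?\lten_{eAe} eA,\ ?\lten_A Ae,\ \RHom_A(Ae,?))$, with $j_!$ fully faithful by Lemma~\ref{lem:compact-object-induces-embedding}. To verify that $f$ is a homological epimorphism I would show that $B\lten_A X \simeq 0$, where $X$ is the fiber of $f$ in $\D(A^{\oop}\ten_k A)$. By construction $X$ lies in the localising $A^{\oop}\ten_k A$-subcategory generated by $Ae\lten_{eAe} eA$, so this reduces to $B\lten_A eA \simeq 0$; but $B\lten_A eA \cong Be$ as a complex of left $B$-modules, and the equation $dh=e$ in $B$ provides an explicit contracting homotopy for $Be$ (given up to sign by right multiplication by $h$, since $h=ehe$), so $Be\simeq 0$ in $\D(B)$. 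Lemma~\ref{lem:recollement-induced-by-homological-epimorphism} then produces a recollement with $\Loc(X)$ on the right; identifying $\Loc(X) = \Loc(eA) = j_!(\D(eAe))$ via the full faithfulness of $j_!$ yields the recollement claimed in the statement.

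For the moreover clause, suppose $Ae\in\per(eAe)$. Then $j^*(A)=Ae\in\per(eAe)$, so $j^*$ sends the compact generator $A$ of $\D(A)$ to a compact object, and by triangulatedness restricts to $\per(A)\to\per(eAe)$. Lemma~\ref{lem:extending-recollement-downward} then gives the one-step downward extension. I expect the main obstacle to be the rigorous construction of $B$: one must adjoin enough null-homotopies to kill $AeA$ and enforce the homological-epimorphism condition while simultaneously keeping $B$ non-positive and preserving the $\h^0$ computation — a careful Drinfeld DG-quotient argument generalising the ordinary-algebra setup of \cite[Proposition 6.1]{KalckYang18b}.
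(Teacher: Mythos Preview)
Your proposal is correct and follows essentially the same route as the paper. The paper itself omits the construction of $B$, deferring entirely to \cite[Proposition~6.1]{KalckYang18b}; you have sketched exactly that Drinfeld DG-quotient construction and correctly flagged its delicate point (adjoining only non-positive cells while forcing the homological-epimorphism condition and controlling $\h^0$). Your ``moreover'' argument---$j^*(A)=Ae\in\per(eAe)$, hence $j^*$ preserves compacts, hence Lemma~\ref{lem:extending-recollement-downward} applies---is verbatim the paper's.

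One small point worth tightening if you write this out in full: the claim ``by construction $X$ lies in the localising bimodule subcategory generated by $Ae\lten_{eAe}eA$'' is exactly what makes the argument work, but it is not automatic from the bare description ``adjoin $h$ with $dh=e$ plus further null-homotopies''. You need the inductive construction of $B$ to be organised so that each stage $B_n\to B_{n+1}$ has cofibre (as $A$-bimodule) built from copies of $Ae\otimes_k eA$ (or $Ae\lten_{eAe}eA$); this is what the cell-attachment in the Drinfeld quotient of the one-object dg category $\{e\}$ inside $A$ actually gives. Once that is made precise, your reduction to $Be\simeq 0$ via the contracting homotopy $h$ goes through cleanly.
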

\begin{proof} The proof of the first statement is similar to the proof of \cite[Proposition 6.1]{KalckYang18b}, and we omit it.
For the `moreover' part, notice that $j^*(A)=A\otimes_A^{\L} Ae=Ae$, which is compact in $\D(eAe)$ by assumption. Thus $j^*$ sends compact objects to compact objects. By Lemma \ref{lem:extending-recollement-downward}, the recollement extends one step downwards.
\end{proof}

\begin{Rem}[\cite{CPS}]\label{rem:stratifying-ideal}
Assume that $A$ is a $k$-algebra, flat as a $k$-module.
If $AeA$ is a \emph{stratifying ideal}, \ie the canonical morphism $Ae\otimes^\mathbf{L}_{eAe}{eA}\rightarrow A$ induces an isomorphism $Ae\otimes^\mathbf{L}_{eAe}{eA}\cong AeA$, then $B=A/AeA$. For example, if $k$ is a field and $A=kQ$ is the path algebra of an acyclic quiver $Q$, then $AeA$ is a stratifying ideal for any idempotent $e$ of $A$.
\end{Rem}

\smallskip

\subsubsection{Left/right recollements for $\per$, $\thick(D-)$ and $\D_{\fg}$}\

Assume that $k$ is noetherian. Let $A, B$ and $C$ be dg $k$-algebras. Consider a diagram

\[
\xymatrixcolsep{4pc}\xymatrix{\D(B) \ar@<.7ex>[r] \ar@/_1pc/[r]&\D(A) \ar@<.7ex>[l] \ar@/_1pc/[l]  \ar@<.7ex>[r]  \ar@/_1pc/[r] &\D(C). \ar@<.7ex>[l] \ar@/^-1pc/[l]}
\]
{}

\begin{Lem}
\label{lem:half-recollement-for-per-etc}
Assume that in the above diagram the upper three rows and the lower three rows are both recollements (\ie the diagram is a ladder of height 2 in the sense of \cite{BGS,AKLY}).
Then the upper two rows restrict to a left recollement of $\per(A)$ in terms of $\per(B)$ and $\per(C)$, the middle two rows restrict to a right recollement of $\D_{\fg}(A)$ in terms of $\D_{\fg}(B)$ and $\D_{\fg}(C)$. In particular, the second row restricts short exact sequences
\[
\xymatrix@R=0.3pc{
\per(B)\ar[r] & \per(A)\ar[r] & \per(C),\\
\D_{\fg}(B)\ar[r] & \D_{\fg}(A)\ar[r] & \D_{\fg}(C),
}
\]
and the third row restricts to a short exact sequence
\[
\xymatrix@R=0.3pc{
\D_{\fg}(B) & \D_{\fg}(A)\ar[l] & \D_{\fg}(C).\ar[l]
}
\]
If $k$ is a field, then the lower two rows restrict to a left recollement of $\thick(DA)$ in terms of $\thick(DB)$ and $\thick(DC)$. In particular, the third row restricts to a short exact sequence
\[
\xymatrix@R=0.3pc{
\thick(DB) & \thick(DA)\ar[l] & \thick(DC).\ar[l]
}
\]
\end{Lem}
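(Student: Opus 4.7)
The plan is to name the four rows of the ladder as
\[
\text{Row }1\colon (i^*,j_!),\quad \text{Row }2\colon (i_*,j^*),\quad \text{Row }3\colon (i^!,j_*),\quad \text{Row }4\colon (H,K),
\]
where $H$ and $K$ denote the right adjoints of $i^!$ and $j_*$ supplied by the downward extension. Rows 1--3 are the original recollement, while Rows 2--4 form the new recollement produced by the extension (a recollement of $\D(A)$ in terms of $\D(C)$ and $\D(B)$, with middle embedding $j_*$ and top-row embedding $H$). Under this dictionary each of the three restriction statements of the lemma reduces to checking closure of the four functors appearing in the relevant pair of rows on the target subcategory ($\per$, $\D_{\fg}$, or $\thick(D-)$), and then applying Lemma~\ref{lem:restricting-half-recollement}, possibly via the symmetry that a right recollement of $\mathcal{T}$ in terms of $\mathcal{T}',\mathcal{T}''$ is the same data as a left recollement of $\mathcal{T}$ in terms of $\mathcal{T}'',\mathcal{T}'$.

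For the left recollement of $\per(A)$, I would verify that all four functors $i^*,j_!,i_*,j^*$ of the upper two rows preserve compact objects. Lemma~\ref{adjoint} handles $i^*$ and $j_!$ without any extra hypothesis, while Lemma~\ref{lem:extending-recollement-downward} handles $i_*$ and $j^*$ since the ladder has height at least one downward. Lemma~\ref{lem:restricting-half-recollement} then descends the left recollement $(i^*,i_*,j^*,j_!)$ of $\D(A)$ to a left recollement of $\per(A)$ in terms of $\per(B)$ and $\per(C)$, whose top row — the second row of the ladder — is by Proposition~\ref{prop:recollement-vs-ses} the claimed short exact sequence $\per(B)\to\per(A)\to\per(C)$.

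For the right recollement of $\D_{\fg}(A)$, I would invoke Lemma~\ref{FkeepcompactGkeepDbmod} on the four adjoint pairs $i^*\dashv i_*$, $j_!\dashv j^*$, $i_*\dashv i^!$ and $j^*\dashv j_*$: in each case the left adjoint has already been shown to preserve $\per$, so the right adjoint preserves $\D_{\fg}$. Hence the four functors $i_*,j^*,i^!,j_*$ of the middle two rows all restrict, and Lemma~\ref{lem:restricting-half-recollement}, applied via the left--right symmetry recalled above, packages them into a right recollement of $\D_{\fg}(A)$ in terms of $\D_{\fg}(B)$ and $\D_{\fg}(C)$; the asserted $\D_{\fg}$-short-exact-sequences on the second and third rows of the ladder are then two rows of this restricted recollement, via Proposition~\ref{prop:recollement-vs-ses}.

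For the $\thick(D-)$ statement, assuming $k$ is a field, I would apply Lemma~\ref{lem:Nakayama-functor}(b) to each of the four adjoint triples present in the ladder: $(i^*,i_*,i^!)$ restricts $i^!$ to $\thick(DA)\to\thick(DB)$; $(i_*,i^!,H)$ restricts $H$ to $\thick(DB)\to\thick(DA)$; $(j_!,j^*,j_*)$ restricts $j_*$ to $\thick(DC)\to\thick(DA)$; and $(j^*,j_*,K)$ restricts $K$ to $\thick(DA)\to\thick(DC)$. The main bookkeeping point is that the four functors of the lower two rows genuinely assemble into a left recollement of $\D(A)$ in terms of $\D(B)$ (with fully faithful embedding $H$) and $\D(C)$ (with projection $K$); the orthogonality $K\circ H=0$ needed here is deduced via the adjunctions $i^!\dashv H$ and $j_*\dashv K$ from the relation $i^!\circ j_*=0$ holding in the top row of the original recollement. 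Lemma~\ref{lem:restricting-half-recollement} then yields the left recollement of $\thick(DA)$ in terms of $\thick(DB)$ and $\thick(DC)$, and the asserted SES $\thick(DB)\xleftarrow{i^!}\thick(DA)\xleftarrow{j_*}\thick(DC)$ drops out as the bottom row of this left recollement by Proposition~\ref{prop:recollement-vs-ses}.
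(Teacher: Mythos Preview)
Your proposal is correct and follows essentially the same route as the paper: reduce via Lemma~\ref{lem:restricting-half-recollement} and Proposition~\ref{prop:recollement-vs-ses} to checking that the relevant functors preserve $\per$, $\D_{\fg}$, and $\thick(D-)$, and verify these using Lemmas~\ref{adjoint}/\ref{lem:extending-recollement-downward} (equivalently Lemma~\ref{lem:Nakayama-functor}(a)), Lemma~\ref{FkeepcompactGkeepDbmod}, and Lemma~\ref{lem:Nakayama-functor}(b), respectively. One cosmetic slip: the sequence $\thick(DB)\xleftarrow{i^!}\thick(DA)\xleftarrow{j_*}\thick(DC)$ is the \emph{top} row of the restricted left recollement (the row of left adjoints), not the bottom row, though both rows are short exact by Proposition~\ref{prop:recollement-vs-ses}(a) so the conclusion is unaffected.
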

\begin{proof}
In view of Lemma~\ref{lem:restricting-half-recollement} and Proposition~\ref{prop:recollement-vs-ses}, it is enough to show that the upper two rows restrict to $\per$, the middle two rows restrict to $\D_\fd$ and when $k$ is a field, the lower two rows restrict to $\thick(D-)$. This follows from the Lemmas~\ref{lem:Nakayama-functor} and~\ref{FkeepcompactGkeepDbmod}.
\end{proof}

\bigskip

\section{Short exact sequences of triangle quotients}
\label{ss:proof-of-Theorem:recoltoDsg}\

Let $\mathcal{T},~\mathcal{T}'$ and $\mathcal{T}''$ be triangulated $k$-categories, and $\mathcal{S},~\mathcal{S}'$ and $\mathcal{S}''$ be triangulated subcategories of $\mathcal{T},~\mathcal{T}'$ and $\mathcal{T}''$, respectively. Assume that there is a commutative diagram of triangle functors
\begin{eqnarray}
\label{ses-of-triangulated-categories}
\xymatrix{
\mathcal{S}'\ar[r]^{G|_{\mathcal{S}'}} \ar@{_{(}->}[d]& \mathcal{S}\ar[r]^{F|_{\mathcal{S}}}\ar@{_{(}->}[d] & \mathcal{S}''\ar@{_{(}->}[d]\\
\mathcal{T}'\ar[r]^{G} & \mathcal{T}\ar[r]^F & \mathcal{T}'',
}
\end{eqnarray}
where both rows are short exact sequences up to direct summands. We are interested in when the induced sequence
\begin{align}
\label{ses:quotients}
\xymatrix{
\mathcal{T}'/\mathcal{S}'\ar[r]^{\bar{G}} & \mathcal{T}/\mathcal{S}\ar[r]^{\bar{F}} & \mathcal{T}''/\mathcal{S}''
}
\end{align}
is a short exact sequence up to direct summands. By \cite[Lemma 3.2]{KalckYang16}, 
$\bar{F}$ induces a triangle equivalence $(\mathcal{T}/\mathcal{S})/(\thick\bar{G}(\mathcal{T}'/\mathcal{S}'))\simeq \mathcal{T}''/\mathcal{S}''$ up to direct summands. Moreover, $\bar{G}$ sends non-zero objects to non-zero objects. Indeed, 
Let $X\in \mathcal{T}'/\mathcal{S}'$ such that $\bar{G}(X)=0$, that is, $G(X)\in \add\mathcal{S}$. Then $F|_{\mathcal{S}}(G(X))=0$, which shows that $X\in\add\mathcal{S}'$. 
Therefore, \eqref{ses:quotients} is a short exact sequence up to direct summands if and only if $\bar{G}$ is fully faithful if and only if $\bar{G}$ is full. However, in general $\bar{G}$ may not be full, actually it may even not be faithful, see Remark~\ref{rem:counterexample} below.

\begin{Lem}\label{lem:ses-of-triangle-quotients}
If one of the following two conditions is satisfied: 
\begin{itemize}
\item[(a)] both $G|_{\mathcal{S}'}$ and $F|_{\mathcal{S}}$ admit left adjoints $G_\lambda$ and $F_\lambda$ such that $\Hom(F_\lambda(\mathcal{S}''),G(\mathcal{T}'))=0$, 
\item[(b)] both $G|_{\mathcal{S}'}$ and $F|_{\mathcal{S}}$ admit right adjoints $G_\rho$ and $F_\rho$ such that $\Hom(G(\mathcal{T}'),F_\rho(\mathcal{S}''))=0$,
\end{itemize}
then \eqref{ses:quotients} is a short exact sequence up to direct summands. If further the first row of \eqref{ses-of-triangulated-categories} is a short exact sequence, then so is \eqref{ses:quotients}.
\end{Lem}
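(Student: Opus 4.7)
The plan is to exploit the hypothesised adjoints to rewrite roofs representing morphisms in the quotients and thereby lift them from $\mathcal{T}/\mathcal{S}$ to $\mathcal{T}'/\mathcal{S}'$. By the discussion immediately preceding the lemma, the equivalence $(\mathcal{T}/\mathcal{S})/\thick\bar{G}(\mathcal{T}'/\mathcal{S}')\simeq\mathcal{T}''/\mathcal{S}''$ up to summands and the fact that $\bar{G}$ sends nonzero objects to nonzero objects are already in place, so the whole task reduces to showing that $\bar{G}$ is fully faithful. I shall treat case (a); case (b) follows by the dual argument on opposite categories. The first observation is that under (a) the subcategory sequence $\mathcal{S}'\to\mathcal{S}\to\mathcal{S}''$ forms a left recollement, so every $S\in\mathcal{S}$ fits into a canonical triangle
\[
F_\lambda F|_{\mathcal{S}}(S)\to S\to G|_{\mathcal{S}'}G_\lambda(S)\to F_\lambda F|_{\mathcal{S}}(S)[1]
\]
whose third vertex lies in $G(\mathcal{S}')\subset G(\mathcal{T}')$.

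To prove fullness, I would take $\varphi\colon G(X)\to G(Y)$ in $\mathcal{T}/\mathcal{S}$ represented by a left fraction $G(X)\xleftarrow{s}W\xrightarrow{g}G(Y)$, complete $s$ to a triangle $W\to G(X)\xrightarrow{p}S\to W[1]$ with $S\in\mathcal{S}$, and apply the octahedron to the composable pair $G(X)\xrightarrow{p}S\to G|_{\mathcal{S}'}G_\lambda(S)$ to produce an object $V$ sitting in triangles
\[
V\to G(X)\to G|_{\mathcal{S}'}G_\lambda(S)\to V[1],\qquad W\to V\to F_\lambda F|_{\mathcal{S}}(S)\to W[1].
\]
Applying $\Hom(-,G(Y))$ to the second triangle and using the orthogonality $\Hom(F_\lambda(\mathcal{S}''),G(\mathcal{T}'))=0$ produces an isomorphism $\Hom(V,G(Y))\xrightarrow{\sim}\Hom(W,G(Y))$, so $g$ extends uniquely to a morphism $g'\colon V\to G(Y)$, and the new roof $G(X)\leftarrow V\xrightarrow{g'}G(Y)$ still represents $\varphi$, but now its left leg has cone in $G(\mathcal{S}')$. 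Both $G(X)$ and $G|_{\mathcal{S}'}G_\lambda(S)$ lie in the essential image of the fully faithful $G$, so the first triangle shows $V\cong G(V')$ for some $V'\in\mathcal{T}'$, and full faithfulness of $G$ lifts the entire roof to $X\leftarrow V'\to Y$ in $\mathcal{T}'$ with left leg having cone $G_\lambda(S)\in\mathcal{S}'$, producing the desired preimage of $\varphi$.

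For faithfulness, if $\bar{G}(\alpha)=0$ with $\alpha$ represented by $X\xleftarrow{t}Z\xrightarrow{h}Y$, the vanishing supplies a morphism $W\to G(Z)$ in $\mathcal{T}$ with cone in $\mathcal{S}$ whose composition with $G(h)$ is zero; the same construction produces a replacement $V\to G(Z)$ with cone in $G(\mathcal{S}')$ still annihilated by $G(h)$ (the $\Hom$-isomorphism preserves vanishing), and this descends by full faithfulness of $G$ to a witness in $\mathcal{T}'$ that $\alpha=0$. The main obstacle is calibrating the modification so that the right-hand leg of the roof survives it, and this is exactly what the orthogonality hypothesis is engineered to buy: the counit triangle has its fibre in $F_\lambda(\mathcal{S}'')$, which is invisible to $G(\mathcal{T}')$. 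Finally, when the first row of \eqref{ses-of-triangulated-categories} is an honest short exact sequence, $\bar{G}(\mathcal{T}'/\mathcal{S}')$ is thick in $\mathcal{T}/\mathcal{S}$, so the equivalence from \cite[Lemma 3.2]{KalckYang16} upgrades to an honest equivalence and \eqref{ses:quotients} is an honest short exact sequence.
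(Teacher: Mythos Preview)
Your proof is correct and follows essentially the same approach as the paper: use the left-recollement triangle $F_\lambda F(S)\to S\to GG_\lambda(S)$ in $\mathcal{S}$, the octahedral axiom, and the orthogonality hypothesis to replace a given roof by one whose middle object lies in the image of $G$. The only organizational difference is that the paper first lifts the map $G(X)\to GG_\lambda(S)$ along $G$ to obtain an object $Z'\in\mathcal{T}'$ and then applies the octahedron, whereas you apply the octahedron in $\mathcal{T}$ to obtain $V$ and then observe that $V$ lies in the essential image of $G$; your $V$ is exactly the paper's $G(Z')$. Your separate faithfulness paragraph is harmless but unnecessary: as noted in the discussion preceding the lemma, $\bar G$ already detects zero objects, and a full triangle functor that detects zero objects is automatically faithful, so fullness suffices.
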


\begin{proof}
We show that $\bar{G}$ is full.

(a)
For any $X, Y\in \mathcal{T}'/\mathcal{S}'$ and $f\in \Hom_{\mathcal{T}/\mathcal{S}}(\bar{G}(X), \bar{G}(Y))$. We may write $f$ as  $G(X) \xleftarrow{s} Z \xra{g} G(Y)$, such that
 $s$ extends to a triangle $ Z \xra{s} G(X) \xra{h} U \ra Z[1]$ in $\mathcal{T}$ with $U\in\mathcal{S}$. By Proposition~\ref{prop:recollement-vs-ses}(a),
there is a triangle $F_\lambda F(U) \ra U \xra{a} GG_\lambda(U) \ra F_\lambda F(U)[1]$ in $\mathcal{S}$. Since $G$ is fully faithful, there exists $h'\in\Hom_{\mathcal{T}'}(X,G_\lambda(U))$ such that $ah=G(h')$.
Extend $h'$ to a triangle $Z'\xra{s'} X \xra{h'} G_\lambda(U) \ra Z'[1]$ in $\mathcal{T}'$.
Now consider the octahedron
\[{\scriptsize
\begin{xy} 0;<0.35pt,0pt>:<0pt,-0.35pt>::
(300,0) *+{G(Z')[1]} ="0",
(0,160) *+{Z[1]}="1",
(420,160) *+{F_\lambda F(U)[1]}="2",
(180,240) *+{G(X)}="3",
(600,240) *+{GG_\lambda(U)[1]}="4",
(300,400) *+{U}="5",
"0", {\ar@{->>}|{-G(s')[1]} "3"}, "0", {\ar@{.>} "2"},
"1", {\ar|{t} "0"},
"1", {\ar@{->>}|{-s[1]} "3"}, "2", {\ar@{.>>} "1"},
"2", {\ar@{.>>} "5"}, "3", {\ar|{h} "5"},
"3", {\ar|(0.6){G(h')} "4"}, "4", {\ar "0"},
"4", {\ar@{.>} "2"}, "5", {\ar "1"},
"5", {\ar|{a} "4"},
\end{xy}}
\]
By the octahedral axiom, there is a triangle
\[
\xymatrix{
F_\lambda F(U)[-1] \ar[r] & Z\ar[r]^(0.45){t[-1]} & G(Z')\ar[r] & F_\lambda F(U).
}
\]
So $\mathrm{Cone}(t[-1])\cong F_\lambda F(U)\in\mathcal{S}$. Moreover, because $\Hom(F_\lambda F(U)[-1],G(Y))=0$, the morphism $g\colon Z\to G(Y)$ has to factor through $t[-1]$, so there exists $g'\in\Hom(Z',Y)$ such that $g=G(g')\circ t[-1]$.
Now the commutative diagram
\[
\xymatrix@C=4.5pc{
&Z\ar[d]|{t[-1]} \ar[ddl]|{s}\ar[ddr]|g  &\\
&G(Z')\ar[ld]|{G(s')}\ar[rd]|{G(g')}&\\
G(X) && G(Y)
}
\]
shows that $G(X) \xleftarrow{s} Z \xra{g} G(Y)$ is equivalent  to $G(X) \xleftarrow{G(s')} G(Z') \xra{G(g')} G(Y)$, the image of $X\xleftarrow{s'} Z'\xra{g'} Y$ under $\bar{G}$.

(b)
For any $X, Y\in \mathcal{T}'/\mathcal{S}'$ and $f\in \Hom_{\mathcal{T}/\mathcal{S}}(\bar{G}(X), \bar{G}(Y))$. We may write $f$ as  $G(X) \xleftarrow{s} Z \xra{g} G(Y)$, such that
 $s$ extends to a triangle $ Z \xra{s} G(X) \xra{h} U \ra Z[1]$ in $\mathcal{T}$ with $U\in\mathcal{S}$.  by Proposition~\ref{prop:recollement-vs-ses}(a), there is a triangle $G G_\rho(U) \xra{b} U \xra{a} F_\rho F(U) \ra G G_\rho(U)[1]$ in $\mathcal{S}$. Since $\Hom(G(X),F_\rho F(U))=0$ and $G$ is fully faithful, there exists $h'\in\Hom_{\mathcal{T}'}(X,G_\rho(U))$ such that $h=b\circ G(h')$.
Extend $h'$ to a triangle $Z'\xra{s'} X \xra{h'} G_\rho(U) \ra Z'[1]$ in $\mathcal{T}'$.
Now consider the octahedron
\[{\scriptsize
\begin{xy} 0;<0.35pt,0pt>:<0pt,-0.35pt>::
(300,0) *+{Z[1]} ="0",
(0,160) *+{G(Z')[1]}="1",
(420,160) *+{F_\rho F(U)}="2",
(180,240) *+{G(X)}="3",
(600,240) *+{U}="4",
(300,400) *+{GG_\rho(U)}="5",
"0", {\ar@{->>}|{-s[1]} "3"}, "0", {\ar@{.>} "2"},
"1", {\ar|{t} "0"},
"1", {\ar@{->>}|{-G(s')[1]} "3"}, "2", {\ar@{.>>} "1"},
"2", {\ar@{.>>} "5"}, "3", {\ar|{G(h')} "5"},
"3", {\ar|(0.6){h} "4"}, "4", {\ar "0"},
"4", {\ar@{.>} "2"}, "5", {\ar "1"},
"5", {\ar|{b} "4"},
\end{xy}}
\]
By the octahedral axiom, there is a triangle
\[
\xymatrix{
G(Z')\ar[r]^(0.55){t[-1]} & Z\ar[r] & F_\rho F(U) \ar[r] & G(Z')[1].
}
\]
So $\mathrm{Cone}(t[-1])\cong F_\rho F(U)[-1]\in\mathcal{S}$. Moreover, since $G$ is fully faithful, there exists $g'\in\Hom(Z',Y)$ such that $G(g')=g\circ (t[-1])$. 
Now the commutative diagram
\[
\xymatrix@C=4.5pc{
&G(Z')\ar[d]|{t[-1]} \ar[ddl]|{G(s')}\ar[ddr]|{G(g')}  &\\
&Z\ar[ld]|{s}\ar[rd]|{g}&\\
G(X) && G(Y)
}
\]
shows that $G(X) \xleftarrow{s} Z \xra{g} G(Y)$ is equivalent  to $G(X) \xleftarrow{G(s')} G(Z') \xra{G(g')} G(Y)$, the image of $X\xleftarrow{s'} Z'\xra{g'} Y$ under $\bar{G}$.
\end{proof}

We recover \cite[Lemma 2.3]{Lu17}.
\begin{Cor}
\label{cor:half-recollement-of-quotient}
Let
\begin{align*}
\xymatrixcolsep{4pc}\xymatrix{\mathcal{T'} \ar[r]|{G} &\mathcal{T} \ar@/_1pc/[l]_{G_\lambda}  \ar[r]|{F}  &\mathcal{T''} \ar@/^-1pc/[l]_{F_\lambda}
  }
\end{align*}
be a left recollement of $\mathcal{T}$ in terms of $\mathcal{T}'$ and $\mathcal{T}''$ which restricts to a left recollement of $\mathcal{S}$ in terms of $\mathcal{S}'$ and $\mathcal{S}''$.
Then it induces a left recollement
\begin{align*}
\xymatrixcolsep{4pc}\xymatrix{\mathcal{T'}/\mathcal{S}' \ar[r]|{\bar{G}} &\mathcal{T}/\mathcal{S} \ar@/_1pc/[l]_{\bar{G}_\lambda}  \ar[r]|{\bar{F}}  &\mathcal{T''}/\mathcal{S}''. \ar@/^-1pc/[l]_{\bar{F}_\lambda}
  }
\end{align*}
\end{Cor}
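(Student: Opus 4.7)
The plan is to derive the conclusion in two stages: first obtain a short exact sequence of the Verdier quotients by applying Lemma~\ref{lem:ses-of-triangle-quotients}(a), and then upgrade this short exact sequence to a left recollement by descending the left adjoint $G_\lambda$ (or $F_\lambda$) to the quotients and invoking Proposition~\ref{prop:recollement-vs-ses}(a).

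For the first stage, I would verify the two hypotheses of Lemma~\ref{lem:ses-of-triangle-quotients}(a). The existence of left adjoints to $G|_{\mathcal{S}'}$ and $F|_{\mathcal{S}}$ is built into the assumption that the ambient left recollement restricts to a left recollement on $\mathcal{S}$. The orthogonality condition $\Hom(F_\lambda(\mathcal{S}''),G(\mathcal{T}'))=0$ follows at once from $F\circ G=0$ (which holds in any left recollement) together with the ambient $(F_\lambda,F)$-adjunction: for $X\in\mathcal{S}''$ and $Y\in\mathcal{T}'$,
\[
\Hom_{\mathcal{T}}(F_\lambda(X),G(Y))\cong\Hom_{\mathcal{T}''}(X,FG(Y))=0.
\]
Since the top row of the ambient diagram is an honest short exact sequence by Proposition~\ref{prop:recollement-vs-ses}(a), Lemma~\ref{lem:ses-of-triangle-quotients}(a) produces an honest short exact sequence $\mathcal{T}'/\mathcal{S}' \xra{\bar G} \mathcal{T}/\mathcal{S} \xra{\bar F} \mathcal{T}''/\mathcal{S}''$.

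For the second stage, the restriction hypothesis gives $G_\lambda(\mathcal{S})\subseteq\mathcal{S}'$ and $G(\mathcal{S}')\subseteq\mathcal{S}$, so by the universal property of the Verdier quotient both $G$ and $G_\lambda$ descend to functors $\bar G$ and $\bar G_\lambda$ between the quotients, and the unit $\mathrm{id}_{\mathcal{T}}\to GG_\lambda$ and counit $G_\lambda G\to\mathrm{id}_{\mathcal{T}'}$ pass to natural transformations between the descended functors while the triangle identities survive. Thus $(\bar G_\lambda,\bar G)$ is an adjoint pair, and Proposition~\ref{prop:recollement-vs-ses}(a) then automatically supplies a left adjoint $\bar F_\lambda$ of $\bar F$ and arranges all four functors into a left recollement of $\mathcal{T}/\mathcal{S}$ in terms of $\mathcal{T}'/\mathcal{S}'$ and $\mathcal{T}''/\mathcal{S}''$. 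The only substantive ingredient is the descent of adjunctions along Verdier quotients, which is routine from the universal property once both functors respect the thick subcategories; I do not foresee a real obstacle, as the delicate point (that $\bar G$ remains fully faithful) has been packaged into the earlier Lemma~\ref{lem:ses-of-triangle-quotients}.
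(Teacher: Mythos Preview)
Your proposal is correct and follows essentially the same route as the paper's proof: first verify the orthogonality condition via the adjunction $(F_\lambda,F)$ and $FG=0$ to apply Lemma~\ref{lem:ses-of-triangle-quotients}(a), then upgrade the resulting short exact sequence to a left recollement using Proposition~\ref{prop:recollement-vs-ses}(a). The only cosmetic difference is that where you sketch inline the descent of the adjunction $(G_\lambda,G)$ through the Verdier quotients, the paper simply cites \cite[Lemma 1.1]{Orlov04}, which is precisely that descent statement.
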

\begin{proof}
Since $\Hom(F_\lambda(\mathcal{S}''),G(\mathcal{T}))=\Hom(\mathcal{S}'',FG(\mathcal{T}))=0$, it follows from Lemma~\ref{lem:ses-of-triangle-quotients} that the second row of the above diagram is a short exact sequence. The desired result then follows from Proposition~\ref{prop:recollement-vs-ses}(a) and \cite[Lemma 1.1]{Orlov04}.
\end{proof}

We end this section with a remark.

\begin{Rem}
\label{rem:counterexample}
The sequence \eqref{ses:quotients} is in general not a short exact sequence up to direct summands. 
We give a class of examples. Let $Q$ be an acyclic quiver and let $i$ be a vertex of $Q$ which is neither a source nor a sink. Let $Q^1$ be the quiver obtained from $Q$ by removing $i$ and all arrows incident to $i$, and let $Q^2$ be the quiver obtained from $Q^1$ by adding a new arrow $[\alpha\beta]: s(\beta)\to t(\alpha)$ for every pair $(\alpha,\beta)$ of arrows of $Q$ with $t(\beta)=i=s(\beta)$. Here an arrow $\rho$ starts at $s(\rho)$ and ends at $t(\rho)$. Since $i$ is neither a source nor a sink, it follows that $Q^1$ and $Q^2$ are different.

Now let $A=kQ$ be the path algebra of $Q$ and let $e=e_i$ be the trivial path at $i$. Then $eAe\cong k$, $A/AeA=kQ^1$,  $(1-e)A(1-e)=kQ^2$ and $A/A(1-e)A\cong k$. By Remark~\ref{rem:stratifying-ideal} and Lemma~\ref{lem:half-recollement-for-per-etc}, there are short exact sequences
\[
\xymatrix@C=5pc@R=0.8pc{
\K^b(\proj (1-e)A(1-e))\ar[r]^(0.6){?\lten_{(1-e)A(1-e)} (1-e)A} & \K^b(\proj A)\ar[r]^(0.45){?\lten_A A/A(1-e)A} & \K^b(\proj A/A(1-e)A),\\
\K^b(\proj eAe)\ar[r]^{?\lten_{eAe} eA} & \K^b(\proj A)\ar[r]^{?\lten_A A/AeA} & \K^b(\proj A/AeA).
}
\]
It is straightforward to check that the composition 
\[
\xymatrix@C=5pc{\K^b(\proj eAe)\ar[r]^{?\lten_{eAe} eA}&\K^b(\proj A)\ar[r]^(0.45){?\lten_A A/A(1-e)A} & \K^b(\proj A/A(1-e)A)}
\] 
is a triangle equivalence and that the following diagram is commutative:
\[
\xymatrix@C=5pc@R=2pc{
0\ar[r]\ar[d]& \K^b(\proj eAe)\ar[d]^{?\lten_{eAe} eA} \ar[r]^(0.45){\simeq} & \K^b(\proj A/A(1-e)A)\ar@{=}[d]\\
\K^b(\proj (1-e)A(1-e))\ar[r]^(0.6){?\lten_{(1-e)A(1-e)} (1-e)A}\ar@{=}[d] & \K^b(\proj A)\ar[r]^(0.45){?\lten_A A/A(1-e)A} \ar[d]^{?\lten_A A/AeA} &  \K^b(\proj A/A(1-e)A)\ar[d]\\
\K^b(\proj (1-e)A(1-e))\ar[r] & \K^b(\proj A/AeA) \ar[r] & 0,
}
\]
where the left functor in the third row is induced by the algebra homomorphism $(1-e)A(1-e)=kQ^2\to kQ^1=A/AeA$ of killing the $[\alpha\beta]$'s, which is neither full nor faithful.

\end{Rem}

\section{A localisation theorem for the singularity category}

In this section, we consider the short exact sequence of singularity categories induced by a recollement of derived categories.
\smallskip

Assume that $k$ is noetheiran. A dg $k$-algebra $A$ is said to be \emph{proper} if $\h^*(A)$ is finitely generated over $k$, \ie $\per(A)\subset \D_{\fg}(A)$. Finite-dimensional $k$-algebras (when $k$ is a field) are typical examples of proper dg $k$-algebras.
The \emph{singularity category} of a proper dg $k$-algebra $A$ is defined as the triangle quotient
 \[
 \D_{\sg}(A):=\D_{\fg}(A)/\per(A).
 \]
By Lemma~\ref{lem:half-recollement-for-per-etc} and Lemma~\ref{lem:ses-of-triangle-quotients}, we obtain the following theorem, which is the main result of this paper.

\begin{Thm}\label{Thm:recoltoDsg}
Let $A$, $B$ and $C$ be proper dg $k$-algebras together with a recollement
\begin{align}
\label{recol:dg-algebras-unbounded}
\xymatrixcolsep{4pc}\xymatrix{\D(B) \ar[r]|{i_*} &\D(A) \ar@/_1pc/[l]|{i^*} \ar@/^1pc/[l]|{i^!} \ar[r]|{j^*}  &\D(C). \ar@/^-1pc/[l]|{j_!} \ar@/^1pc/[l]|{j_*}}
\end{align}
Assume that the recollement extends one step downwards. Then the second row induces a short exact sequence
\begin{equation}
\xymatrix{
 \D_{\sg}(B) \ar[r]^{\bar{i}_{*}} & \D_{\sg}(A) \ar[r]^{\bar{j}^{*}} & \D_{\sg}(C).
 }\label{equ:sesDsg}
\end{equation}
 \end{Thm}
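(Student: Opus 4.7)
The plan is to deduce the theorem from Lemma~\ref{lem:half-recollement-for-per-etc} together with Lemma~\ref{lem:ses-of-triangle-quotients}, essentially stitching them together.

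First I would use that the given recollement \eqref{recol:dg-algebras-unbounded} extends one step downwards, so by Proposition~\ref{prop:recollement-vs-ses}(b) both the upper three and the lower three of the resulting four rows form recollements; equivalently, the diagram is a ladder of height $2$. Applying Lemma~\ref{lem:half-recollement-for-per-etc}, the second row restricts to short exact sequences
\[
\per(B) \xrightarrow{i_*} \per(A) \xrightarrow{j^*} \per(C), \qquad \D_{\fg}(B) \xrightarrow{i_*} \D_{\fg}(A) \xrightarrow{j^*} \D_{\fg}(C),
\]
which together with the inclusions $\per \hookrightarrow \D_{\fg}$ yield a commutative diagram of the shape \eqref{ses-of-triangulated-categories}, with $\mathcal{S}=\per(A)$, $\mathcal{T}=\D_{\fg}(A)$ (and analogously for $B$ and $C$).

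Next I would verify condition (a) of Lemma~\ref{lem:ses-of-triangle-quotients} for this square. The same Lemma~\ref{lem:half-recollement-for-per-etc} tells us that the upper two rows of the ladder restrict to a \emph{left} recollement of $\per(A)$ in terms of $\per(B)$ and $\per(C)$, which in particular supplies left adjoints $i^*|_{\per(A)}$ and $j_!|_{\per(C)}$ of $i_*|_{\per(B)}$ and $j^*|_{\per(A)}$ respectively. The required Hom-vanishing amounts to
\[
\Hom_{\D_{\fg}(A)}(j_! X, i_* Y) \;\cong\; \Hom_{\D(C)}(X, j^* i_* Y) \;=\; 0
\]
for $X\in\per(C)$ and $Y\in\D_{\fg}(B)$, where the first isomorphism is the $(j_!, j^*)$-adjunction in $\D(A)$ and the final equality is the recollement identity $j^*\circ i_* = 0$.

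With these ingredients assembled, Lemma~\ref{lem:ses-of-triangle-quotients} immediately produces the desired short exact sequence $\D_{\sg}(B)\to\D_{\sg}(A)\to\D_{\sg}(C)$ up to direct summands. Since the $\per$-row in the square is an \emph{honest} short exact sequence (coming from a left recollement, not merely up to summands), the addendum of Lemma~\ref{lem:ses-of-triangle-quotients} upgrades this to a genuine short exact sequence, completing the argument. No serious obstacle arises here: the heavy lifting has already been done in the preparatory lemmas, and the only nontrivial check --- the Hom-vanishing --- is a formal consequence of the recollement axioms. (One could equally well verify condition (b) of Lemma~\ref{lem:ses-of-triangle-quotients} using the right adjoints $i^!$ and $j_*$ together with the middle-two-row right recollement of $\D_{\fg}$; either route works.)
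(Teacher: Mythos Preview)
Your proposal is correct and follows exactly the route the paper takes: the paper's one-line proof simply cites Lemma~\ref{lem:half-recollement-for-per-etc} and Lemma~\ref{lem:ses-of-triangle-quotients}, and you have accurately unpacked how these two lemmas combine, including the Hom-vanishing verification (which is precisely the argument in Corollary~\ref{cor:half-recollement-of-quotient}) and the use of the addendum to Lemma~\ref{lem:ses-of-triangle-quotients} to obtain a genuine short exact sequence rather than one up to direct summands.
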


\medskip

\subsection{Finiteness of global dimension}\

For a proper dg $k$-algebra $A$, we say that $A$ has \emph{finite global dimension} and write \emph{$\gl A<\infty$} if $\per A=\D_{\fg}(A)$. Part (b) of the following result generalises \cite[Theorem 4]{Qin20}, and part (c) generalises \cite[Lemma 2.1]{Wiedemann91} (see also \cite[Corollary 5]{Koenig91}, \cite{Happel93} and \cite[Proposition 2.14]{AKLY}).

\begin{Cor}\label{Cor:gldim}
Let $A$, $B$ and $C$ be proper dg $k$-algebras together with a recollement \eqref{recol:dg-algebras-unbounded}. Then
\begin{itemize}
\item[(a)] $i_*$ induces a triangle equivalence $\D_{\sg}(B) \simeq\D_{\sg}(A)$ if and only if $\gl C<\infty$.
\item[(b)] $j^*$ induces a triangle equivalence $\D_{\sg}(A)\simeq \D_{\sg}(C)$ if and only if $i_*(B)\in \per(A)$ and $\gl B<\infty$.
\item[(c)] $\gl A<\infty$ if and only if $\gl B<\infty$ and $\gl C<\infty$.
\end{itemize}
\end{Cor}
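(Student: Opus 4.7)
The plan is to deduce all three statements directly from Theorem~\ref{Thm:recoltoDsg}, using the tautology $\gl D<\infty\iff\D_{\sg}(D)=0$. The theorem produces a short exact sequence
\[
\D_{\sg}(B)\xrightarrow{\bar{i}_*}\D_{\sg}(A)\xrightarrow{\bar{j}^*}\D_{\sg}(C),
\]
so by the definition recalled in the introduction, $\bar{i}_*$ is fully faithful, $\bar{j}^*\bar{i}_*=0$, and the induced functor $\D_{\sg}(A)/\bar{i}_*(\D_{\sg}(B))\xrightarrow{\sim}\D_{\sg}(C)$ is a triangle equivalence. In particular $\bar{j}^*$ is automatically essentially surjective and $\bar{i}_*$ automatically fully faithful; the three parts of the corollary then each amount to detecting when one of these functors is additionally fully faithful or essentially surjective.

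For~(a), $\bar{i}_*$ is an equivalence iff it is essentially surjective iff the Verdier quotient $\D_{\sg}(A)/\bar{i}_*(\D_{\sg}(B))$ vanishes iff, via the induced equivalence, $\D_{\sg}(C)=0$, \ie $\gl C<\infty$. For~(c), if $\D_{\sg}(A)=0$ then full faithfulness of $\bar{i}_*$ forces $\D_{\sg}(B)=0$ and the induced equivalence gives $\D_{\sg}(C)=0$; conversely, if the two outer terms vanish, the induced equivalence collapses the middle to $\D_{\sg}(A)\simeq\D_{\sg}(C)=0$, \ie $\gl A<\infty$.

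For~(b), $\bar{j}^*$ is an equivalence iff it is fully faithful iff $\bar{i}_*(\D_{\sg}(B))=0$, \ie iff $i_*(\D_{\fg}(B))\subset\per(A)$; the plan is to match this single condition on $\bar{i}_*$ with the pair ``$i_*(B)\in\per(A)$ and $\gl B<\infty$''. In one direction, $i_*(\D_{\fg}(B))\subset\per(A)$ applied to $B\in\D_{\fg}(B)$ gives $i_*(B)\in\per(A)$, and combined with full faithfulness of $\bar{i}_*$ it forces $\D_{\sg}(B)=0$. In the other direction, $\gl B<\infty$ gives $\D_{\fg}(B)=\per(B)=\thick_{\D(B)}(B)$, and since $i_*$ is a triangle functor, $i_*(\D_{\fg}(B))\subset\thick_{\D(A)}(i_*B)\subset\per(A)$, using $i_*(B)\in\per(A)$ for the last inclusion. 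The only step with any content beyond formal SES bookkeeping is this matching in~(b); parts~(a) and~(c) are immediate from the SES, so I do not anticipate a serious obstacle.
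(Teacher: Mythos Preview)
Your argument treats Theorem~\ref{Thm:recoltoDsg} as though it applies under the bare hypotheses of the corollary, but the theorem has an extra hypothesis you never verify: the recollement must extend one step downwards. Corollary~\ref{Cor:gldim} does \emph{not} assume this. Thus before you may write down the short exact sequence of singularity categories, you must in each direction of each part deduce from the hypothesis at hand that $i_*$ (equivalently $j^*$) sends compact objects to compact objects, and then invoke Lemma~\ref{lem:extending-recollement-downward}. As written, your first sentence ``The theorem produces a short exact sequence\ldots'' is simply unjustified.

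This verification is exactly what the paper's proof supplies, and it is essentially the only content beyond the SES bookkeeping you describe. For instance, in the forward direction of~(a) one argues $j^*(\per A)\subset j^*(\D_{\fg}(A))\subset\D_{\fg}(C)=\per(C)$ using properness of~$A$, Lemma~\ref{FkeepcompactGkeepDbmod}, and $\gl C<\infty$; in the forward direction of~(c) one argues $i_*(\per B)\subset i_*(\D_{\fg}(B))\subset\D_{\fg}(A)=\per(A)$ using properness of~$B$ and $\gl A<\infty$. In the converse directions of~(a) and~(b), the very fact that $\bar{i}_*$ or $\bar{j}^*$ is a well-defined functor between singularity categories already forces $i_*$ or $j^*$ to preserve $\per$, hence the recollement extends; you should make this explicit. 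Once these one-line verifications are inserted in each case, your SES manipulations go through, and in particular your matching argument in~(b) is correct and arguably cleaner than the paper's appeal to \cite{Qin20}.
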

\begin{proof}
(a) Assume that $\gl C<\infty$. Then $\per(C)=\D_{\fg}(C)$ and $\D_{\sg}(C)=0$. Since $A$ is proper and $j^{*}(\D_{\fg}(A))\subset \D_{\fg}(C)$, we have $j^{*}(\per A)\subset \per C$. By Lemma~\ref{lem:extending-recollement-downward}, the recollement extends one step downwards.  By Theorem \ref{Thm:recoltoDsg}, there is a short exact sequence \eqref{equ:sesDsg}, and then $i_*$ induces a triangle equivalence $\D_{\sg}(B)\simeq \D_{\sg}(A)$.

Assume that $i_*$ induces a triangle equivalence $\D_{\sg}(B)\simeq \D_{\sg}(A)$. Then it is necessary that $i_*(\per(B))\subset \per(A)$. By Lemma~\ref{lem:extending-recollement-downward}, the recollement extends one step downwards. So by Theorem \ref{Thm:recoltoDsg}, there is a short exact sequence \eqref{equ:sesDsg}, forcing $\D_{\sg}(C)$ to be trivial, \ie $\gl C<\infty$.

(b) Assume that $i_*(B)\in \per(A)$ and $\gl B<\infty$. Then $i_*$ sends compact objects to compact objects, and by Lemma~\ref{lem:extending-recollement-downward}, the recollement extends one step downwards. So by Theorem \ref{Thm:recoltoDsg}, there is a short exact sequence \eqref{equ:sesDsg}. But $\D_{\sg}(B)=0$, so $j^*$ induces a triangle equivalence $\D_{\sg}(A)\simeq \D_{\sg}(C)$.

Assume that $j^*$ induces a triangle equivalence $\D_{\sg}(A)\simeq \D_{\sg}(C)$. As in the proof of \cite[Theorem 4]{Qin20}, we can show that $i_*(B)\in\per(A)$. Then by Lemma~\ref{lem:extending-recollement-downward}, the recollement extends one step downwards, and by Theorem \ref{Thm:recoltoDsg}, there is a short exact sequence \eqref{equ:sesDsg}. This forces $\D_{\sg}(B)$ to be trivial, \ie $\gl B<\infty$.

(c) Assume $\gl A<\infty$. Then $\per A=\D_{\fg}(A)$ and $\D_{\sg}(A)=0$. Since $i_{*}(\D_{\fg}(B))\subset \D_{\fg}(A)$ and $B$ is proper, we know $i_{*}(\per B)\subset \per A$. By Lemma~\ref{lem:extending-recollement-downward}, the recollement extends one step downwards. By Theorem \ref{Thm:recoltoDsg}, there is a short exact sequence \eqref{equ:sesDsg}, and then $\D_{\sg}(B)=0=\D_{\sg}(C)$. So $\gl B<\infty$ and $\gl C<\infty$.

Conversely, assume $\gl B<\infty$ and $\gl C<\infty$. Then by (a) we have $\D_{\sg}(A) \simeq \D_{\sg}(B)$, which is trivial. So $\gl A<\infty$.
\end{proof}

\begin{Rem}
We give a concrete example that $\gl B<\infty$, $i_*(B)\not\in\per(A)$, and $\D_{\sg}(A) \not\simeq \D_{\sg}(C)$. Assume that $k$ is a field and let $A$ be the $k$-algebra with radical square zero associated to the quiver
\[
\xymatrix{
1\ar[r]^\beta \ar@(dl,ul) & 2\ar[r]^\gamma & 3 \ar@(dr,ur)
}
\]
Let $e=e_1+e_3$, the sum of the trivial paths at the vertices $1$ and $3$. Then $eAe\cong k[x]/(x^2)\times k[y]/(y^2)$, $A/AeA\cong k$, and as an $A$-module $A/AeA$ is the simple module
$S_2$ at the vertex $2$. It is easy to see that $\mathrm{Ext}^p_A(S_2,S_2)=0$ for all $p>0$. It follows that $A\to A/AeA$ is a homological epimorphism and hence there is a recollement
\vspace{10pt}
\[
\xymatrixcolsep{4pc}
\xymatrix{
\D(k) \ar[r] &\D(A) \ar@/_1pc/[l] \ar@/^1pc/[l] \ar[r]  &\D(k[x]/(x^2)\times k[y]/(y^2)). \ar@/^-1pc/[l] \ar@/^1pc/[l]
}
\]
\smallskip

\noindent We claim that $\D_{\sg}(A)$ is Hom-infinite. Then it can not be equivalent to $\D(k[x]/(x^2)\times k[y]/(y^2))$, which is equivalent to $\mod k\times \mod k$. To prove the claim, consider the short exact sequence
\[
\xymatrix{
0\ar[r] & S_1\oplus S_2 \ar[r] & e_1A\ar[r] & S_1\ar[r] & 0.
}
\]
This implies that $S_1\cong S_1[1]\oplus S_2[1]$ in $\D_{\sg}(A)$. This is not possible in a Hom-finite triangulated category, because $S_2$ is not zero in $\D_{\sg}(A)$ (it has infinite projective dimension over $A$).
\end{Rem}

\medskip

\subsection{Recollements of singularity categories}\

When $k$ is a field and $A$, $B$ and $C$ are finite-dimensional $k$-algebras, the first statement of the following result is \cite[Proposition 3]{Qin20}.

\begin{Cor}
\label{cor:recollement-of-singularity-categories}
Let $A$, $B$ and $C$ be proper dg $k$-algebras together with a recollement \eqref{recol:dg-algebras-unbounded}. If the recollement extends two steps downwards, then there is a right recollement of $\D_{\sg}(A)$ by $\D_{\sg}(B)$ and $\D_{\sg}(C)$. If it extends three steps downwards, then there is a recollement of $\D_{\sg}(A)$ by $\D_{\sg}(B)$ and $\D_{\sg}(C)$.
\end{Cor}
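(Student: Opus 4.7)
The plan is to apply Theorem~\ref{Thm:recoltoDsg} to suitable sub-recollements of the downward-extended ladder and then assemble the pieces via Proposition~\ref{prop:recollement-vs-ses}.

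For the right recollement under two-step extension, I would first apply Theorem~\ref{Thm:recoltoDsg} to the original recollement (which, \emph{a fortiori}, extends one step downwards) to obtain the short exact sequence
\[
\D_{\sg}(B) \xrightarrow{\bar{i}_*} \D_{\sg}(A) \xrightarrow{\bar{j}^*} \D_{\sg}(C).
\]
I then check that $i^!$ and $j_*$ descend to functors $\bar{i}^!, \bar{j}_*$ on $\D_{\sg}$. Preservation of $\per$ follows from Lemma~\ref{adjoint} applied to the pairs $(i^!, (i^!)^R)$ and $(j_*, (j_*)^R)$, whose second components have right adjoints under two-step extension. Preservation of $\D_{\fg}$ follows from Lemma~\ref{FkeepcompactGkeepDbmod} applied to $(i_*, i^!)$ and $(j^*, j_*)$, since $i_*, j^*$ preserve $\per$ under the assumed one-step extension. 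The adjunctions $i_* \dashv i^!$ and $j^* \dashv j_*$ therefore descend to $\bar{i}_* \dashv \bar{i}^!$ and $\bar{j}^* \dashv \bar{j}_*$ on $\D_{\sg}$, and Proposition~\ref{prop:recollement-vs-ses}(a) promotes the short exact sequence to the desired right recollement.

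For the full recollement under three-step extension, the key move is to apply Theorem~\ref{Thm:recoltoDsg} not to the original recollement but to the sub-recollement formed by rows 2--4 of the ladder. This sub-recollement is a recollement of $\D(A)$ in terms of $\D(C)$ and $\D(B)$ (with $j_*$ playing the role of ``$i_*$'' and $i^!$ playing the role of ``$j^*$''), and it extends one step downwards through row~5 under the two-step hypothesis. Applying Theorem~\ref{Thm:recoltoDsg} therefore yields
\[
\D_{\sg}(C) \xrightarrow{\bar{j}_*} \D_{\sg}(A) \xrightarrow{\bar{i}^!} \D_{\sg}(B).
\]
Under the three-step hypothesis, I further verify that the row-4 functors $(i^!)^R$ and $(j_*)^R$ descend to $\D_{\sg}$: preservation of $\per$ follows from Lemma~\ref{adjoint} (using the existence of row~6), and preservation of $\D_{\fg}$ follows from Lemma~\ref{FkeepcompactGkeepDbmod} (using that $i^!, j_*$ preserve $\per$ under two-step extension). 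Consequently, the adjoint triples $(i_*, i^!, (i^!)^R)$ and $(j^*, j_*, (j_*)^R)$ descend to adjoint triples on $\D_{\sg}(A)$, thereby verifying condition~(R1) of Definition~\ref{defn:recollement} with the SES above serving as the middle row. Proposition~\ref{prop:recollement-vs-ses}(b) then delivers a full recollement of $\D_{\sg}(A)$ involving $\D_{\sg}(B)$ and $\D_{\sg}(C)$.

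The principal subtlety is recognising that one must switch sub-recollements in the full-recollement case. A direct attempt to extend the SES from the original recollement to a full recollement would require $i^*$ and $j_!$ to descend to $\D_{\sg}$, but $i^*$ does not preserve $\D_{\fg}$ under downward extensions alone---such preservation would require an \emph{upward} extension via Lemma~\ref{FkeepcompactGkeepDbmod}. The rows 2--4 sub-recollement circumvents this obstruction because its adjoint triples involve only functors from rows 2, 3, and 4 of the ladder, all of which are controlled by the downward-extension hypothesis.
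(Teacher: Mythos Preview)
Your proposal is correct and follows essentially the same approach as the paper, which records the proof only as ``This follows immediately from Theorem~\ref{Thm:recoltoDsg} and \cite[Lemma 1.1]{Orlov04}.'' You have supplied precisely the details that this one-line proof leaves implicit: applying Theorem~\ref{Thm:recoltoDsg} to obtain a short exact sequence, verifying via Lemmas~\ref{adjoint} and~\ref{FkeepcompactGkeepDbmod} that the adjacent rows preserve both $\per$ and $\D_{\fg}$, and then invoking Proposition~\ref{prop:recollement-vs-ses} once the induced adjunctions on the quotients are in hand. The one step you take for granted---that an adjoint pair in which both functors preserve the relevant thick subcategories descends to an adjoint pair on the Verdier quotients---is exactly the content of \cite[Lemma 1.1]{Orlov04}, so you should cite it (or prove it) where you write ``the adjunctions $\ldots$ descend.''

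Your discussion of the ``principal subtlety'' is apt and clarifies something genuinely hidden in the paper's terse proof: under purely downward extension one cannot make $i^*$ and $j_!$ descend to $\D_{\sg}$, so the full recollement one obtains is the one centred on the third row (with $\D_{\sg}(C)$ on the left and $\D_{\sg}(B)$ on the right), not an extension of the original second-row short exact sequence by \emph{left} adjoints. The paper's phrasing ``a recollement of $\D_{\sg}(A)$ by $\D_{\sg}(B)$ and $\D_{\sg}(C)$'' is deliberately agnostic about which side each sits on, and your argument produces exactly the recollement the hypotheses allow.
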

\begin{proof}
This follows immediately from Theorem~\ref{Thm:recoltoDsg} and \cite[Lemma 1.1]{Orlov04}.
\end{proof}

\medskip

\subsection{The singularity category of a triangular matrix algebra}\

When $k$ is a field and $A$ is a finite-dimensional $k$-algebra, the third statement of the following result is exactly \cite[Theorem 3.2]{LiuLu15}.

\begin{Cor}
Let $B$ and $C$ be proper dg $k$-algebras and $M$ be a dg $C$-$B$-bimodule with finite-dimensional total cohomology. Consider the matrix dg algebra $A=\left( \begin{array}{cc} B & 0\\ M & C\end{array}\right)$. Then there is a short exact sequence of triangulated categories
 \[
 \xymatrix{
 \D_{\sg}(B) \ar[r] &  \D_{\sg}(A) \ar[r] & \D_{\sg}(C).}
 \]
Moreover,
\begin{itemize}
\item[(a)]
If $M_B\in\per(B)$, then there is a right recollement of $\D_{\sg}(A)$ by $\D_{\sg}(B)$ and $\D_{\sg}(C)$.
\item[(b)]
If ${}_{C}M\in\per(C^{\oop})$, then there is a left recollement of $\D_{\sg}(A)$ by $\D_{\sg}(B)$ and $\D_{\sg}(C)$.
\item[(c)]
Assume that $B$ is $\K$-projective over $k$. If $M_B\in\per(B)$ and ${}_C M\in\per(C^{\oop})$, then there is a recollement of $\D_{\sg}(A)$ by $\D_{\sg}(B)$ and $\D_{\sg}(C)$.
\end{itemize}
\end{Cor}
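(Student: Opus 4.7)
The plan is to feed the matrix-algebra recollement of Lemma~\ref{lem:ladder-for-matrix-algebra} into Theorem~\ref{Thm:recoltoDsg} and, in the decorated cases, into Corollary~\ref{cor:recollement-of-singularity-categories} and its dual. The first step is to check that $A$ is proper: as a $k$-module $A\cong B\oplus M\oplus C$, so $\h^{*}(A)=\h^{*}(B)\oplus\h^{*}(M)\oplus\h^{*}(C)$ is finitely generated over $k$ by the hypotheses on $B$, $C$ and $M$. Lemma~\ref{lem:ladder-for-matrix-algebra}(a) then supplies a recollement of $\D(A)$ by $\D(B)$ and $\D(C)$ that always extends one step downwards, and Theorem~\ref{Thm:recoltoDsg} directly yields the short exact sequence of singularity categories.

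For (a), the hypothesis $M_{B}\in\per(B)$ together with Lemma~\ref{lem:ladder-for-matrix-algebra}(b) extends the recollement two steps downwards, and Corollary~\ref{cor:recollement-of-singularity-categories} produces the right recollement. For (b), I invoke Lemma~\ref{lem:ladder-for-matrix-algebra}(c) to obtain an upward extension under the hypothesis ${}_{C}M\in\per(C^{\oop})$, after first replacing $M$ by a quasi-isomorphic bimodule that is $\K$-projective as a left $C$-module (this does not change the quasi-isomorphism class of $A$ and so leaves $\D_{\sg}(A)$ unchanged). I then verify that the left adjoints $i^{*}$ and $j_{!}$ descend to triangle functors $\bar{i}^{*}\colon\D_{\sg}(A)\to\D_{\sg}(B)$ and $\bar{j}_{!}\colon\D_{\sg}(C)\to\D_{\sg}(A)$: both automatically preserve $\per$, and Lemma~\ref{FkeepcompactGkeepDbmod} --- applied to the adjoint pair $(i^{*},i_{*})$, with $i_{*}$ compact-preserving thanks to the downward extension, and to the adjoint pair in which $j_{!}$ is the right adjoint, supplied by the upward extension --- shows that each preserves $\D_{\fg}$ as well. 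These descended left adjoints assemble into the asserted left recollement by Proposition~\ref{prop:recollement-vs-ses}. For (c), the $\K$-projectivity of $B$ over $k$ makes the bimodule replacement in (b) unnecessary, so all the required downward and upward extensions hold simultaneously, and combining (a) and (b) gives the full recollement.

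The main obstacle is in (b): one must ensure that $j_{!}$ sends $\D_{\fg}(C)$ into $\D_{\fg}(A)$, which is precisely the payoff of the upward extension through Lemma~\ref{FkeepcompactGkeepDbmod}. The $\K$-projective bimodule replacement needed to invoke Lemma~\ref{lem:ladder-for-matrix-algebra}(c) in the generality of (b) is standard but worth flagging, since that lemma is stated under a strict $\K$-projectivity hypothesis on ${}_{C}M$ that the corollary does not carry explicitly.
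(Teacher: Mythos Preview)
Your overall strategy matches the paper's: feed Lemma~\ref{lem:ladder-for-matrix-algebra} into Theorem~\ref{Thm:recoltoDsg} for the short exact sequence, into Corollary~\ref{cor:recollement-of-singularity-categories} for (a), and use the upward extension from Lemma~\ref{lem:ladder-for-matrix-algebra}(c) for (b). The paper's route for (b) is slightly more packaged --- it applies Corollary~\ref{cor:recollement-of-singularity-categories} to the \emph{reflected} recollement (the upper three rows after the upward extension, with the roles of $B$ and $C$ swapped), which then extends two steps downwards and yields a right recollement of $\D_{\sg}(A)$ by $\D_{\sg}(C)$ and $\D_{\sg}(B)$, i.e.\ a left recollement by $\D_{\sg}(B)$ and $\D_{\sg}(C)$. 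Your hands-on descent of $i^*$ and $j_!$ is essentially the same argument unpacked.

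There is, however, a genuine slip in your justification that $i^*$ preserves $\D_{\fg}$. Lemma~\ref{FkeepcompactGkeepDbmod} says that if the \emph{left} adjoint preserves $\per$ then the \emph{right} adjoint preserves $\D_{\fg}$. Applying it to the pair $(i^*,i_*)$ with $i^*$ on the left only tells you that $i_*$ preserves $\D_{\fg}$, regardless of whether $i_*$ happens to be compact-preserving. To conclude that $i^*$ preserves $\D_{\fg}$ you must use the adjoint pair $(i^{*L},i^*)$ supplied by the \emph{upward} extension, exactly as you (correctly) do for $j_!$; the downward extension plays no role here. Once this is fixed your argument goes through.

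A smaller point on (c): $\K$-projectivity of $B$ over $k$ does not eliminate the need for the bimodule replacement --- $M$ itself need not be $\K$-projective as a left $C$-module. What it buys is that a $\K$-projective resolution $M'$ of $M$ over $C^{\oop}\otimes_k B$ is automatically $\K$-projective over $C^{\oop}$, so Lemma~\ref{lem:ladder-for-matrix-algebra}(c) applies to the replaced matrix algebra $A'$; since $A'\simeq A$, the left and right recollements for the common $\D_{\sg}(A')\simeq\D_{\sg}(A)$ share their middle row and glue to a full recollement. This is what the paper's terse ``(c) follows from (a) and (b)'' is encoding.
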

\begin{proof}
The first statement follows from Lemma~\ref{lem:ladder-for-matrix-algebra}(a) and Theorem~\ref{Thm:recoltoDsg}. Statement (a) follows from Lemma~\ref{lem:ladder-for-matrix-algebra}(b) and Corollary~\ref{cor:recollement-of-singularity-categories}. To prove (b),
let $M'$ be a $\K$-projective resolution of $M$ over $C^{\oop}\ten_k B$ and let $A'=\left( \begin{array}{cc} B & 0\\ M' & C\end{array}\right)$. Then $A$ is quasi-isomorphic to $A'$, so $\D_{\sg}(A)$ is triangle equivalent to $\D_{\sg}(A')$. Moreover, ${}_C M'\in\per(C^{\oop})$ and ${}_C M'$ is $\K$-projective. Therefore by Lemma~\ref{lem:ladder-for-matrix-algebra}(c) and Corollary~\ref{cor:recollement-of-singularity-categories}, there is a left recollement of $\D_{\sg}(A')$ by $\D_{\sg}(B)$ and $\D_{\sg}(C)$. Replacing $\D_{\sg}(A')$ by $\D_{\sg}(A)$, we obtain (b). Finally, (c) follows from (a) and (b), because the lower row of the left recollement in (b) is exactly the upper row of the right recollement in (a).
\end{proof}

\medskip

\subsection{Singular reduction with respect to idempotents}
\label{ss:singular-reduction-wrt-idem}\

Let $A$ be a non-positive proper dg $k$-algebra, and $e\in A$ be an idempotent. The following is an immediate corollary of Theorem \ref{Thm:recoltoDsg}.

\begin{Cor}\label{prop5.3}
Let $B$ be as in Proposition~\ref{prop:recollement-induced-by-idempotent} and assume that $Ae_{eAe}\in\per(eAe)$. Then there is a short exact sequence of triangulated categories:
\[
\xymatrix{\D_{\sg}(B)\ar[r]^{\w{i_*}} & \D_{\sg}(A) \ar[r]^{\w{j^*}} & \D_{\sg}(eAe).}
\]
\end{Cor}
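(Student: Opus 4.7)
The plan is to verify the hypotheses of Theorem~\ref{Thm:recoltoDsg} applied to the recollement supplied by Proposition~\ref{prop:recollement-induced-by-idempotent}. By the ``moreover'' clause of that proposition, the assumption $Ae_{eAe}\in\per(eAe)$ already guarantees that this recollement extends one step downwards, so the only thing left to check is that $B$ and $eAe$ are proper dg $k$-algebras.

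Properness of $eAe$ is immediate: the Peirce decomposition
\[
A=eAe\oplus eA(1-e)\oplus(1-e)Ae\oplus(1-e)A(1-e)
\]
is valid in the category of dg $k$-modules, so passing to cohomology presents $\h^*(eAe)$ as a $k$-submodule of the finitely generated $k$-module $\h^*(A)$; since $k$ is noetherian, this is finitely generated and $eAe$ is proper.

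For $B$ the argument is slightly less direct, and this is really the only substantive step. I would extract the information from the triangle $j_!j^*(A)\to A\to i_*i^*(A)\to j_!j^*(A)[1]$ in $\D(A)$ supplied by axiom (R4) of the recollement. Unwinding the identifications in Proposition~\ref{prop:recollement-induced-by-idempotent}, we have $i_*i^*(A)\cong B$ (viewed as a dg $A$-module via $f$) and $j^*(A)\cong Ae$, so $j_!j^*(A)\cong Ae\lten_{eAe}eA$. Since $j^*$ has the right adjoint $j_*$, Lemma~\ref{adjoint} implies that $j_!$ preserves compactness, and hence the hypothesis $Ae\in\per(eAe)$ gives $j_!j^*(A)\in\per(A)\subset\D_{\fg}(A)$, the inclusion using properness of $A$. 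Combined with $A\in\D_{\fg}(A)$, the triangle forces $i_*(B)\in\D_{\fg}(A)$, which is precisely the statement that $\h^*(B)$ is finitely generated over $k$, i.e.\ that $B$ is proper.

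With all three of $A$, $B$, $eAe$ proper and the recollement extending one step downwards, Theorem~\ref{Thm:recoltoDsg} directly yields the desired short exact sequence $\D_{\sg}(B)\to\D_{\sg}(A)\to\D_{\sg}(eAe)$. I do not foresee any serious obstacle: the whole argument is a matter of citing Proposition~\ref{prop:recollement-induced-by-idempotent}, Theorem~\ref{Thm:recoltoDsg}, and Lemma~\ref{adjoint}, and using the recollement's defining triangle to transfer properness from $A$ (and $Ae\in\per(eAe)$) to $B$.
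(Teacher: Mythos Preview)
Your proposal is correct and follows the same route as the paper: the paper simply declares the result ``an immediate corollary of Theorem~\ref{Thm:recoltoDsg}'' without further argument, and you supply precisely the details that justification requires, namely that $eAe$ and $B$ are proper so that Theorem~\ref{Thm:recoltoDsg} applies to the recollement of Proposition~\ref{prop:recollement-induced-by-idempotent}. Your argument for the properness of $B$ via the canonical triangle is fine; a slightly shorter variant is to note that once the recollement extends one step downwards, Lemma~\ref{lem:extending-recollement-downward} gives $i_*(\per(B))\subset\per(A)\subset\D_{\fg}(A)$, so in particular $i_*(B)\in\D_{\fg}(A)$.
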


\begin{Cor}[{\cite[Theorem 3.1]{Chen10} and \cite[Theorem 2.1]{Chen09}}]
Assume that $A$ is a finite-dimensional $k$-algebra. Assume that $\mathrm{p.d.}{Ae}_{eAe}<\infty$. Then the \emph{Schur} functor $S_e= ?\otimes_A Ae\colon \mod A\rightarrow \mod eAe$ induces a triangle equivalence
\[
\D_{\sg}(A)/\thick\langle q(\mathcal{B}_e)\rangle \simeq \D_{\sg} (eAe),
\]
where $q\colon\D_{\fd}(A)\rightarrow\D_{\sg}(A)$ is the natural quotient functor and $\mathcal{B}_e$ is the essential kernel of $S_e$. Moreover, if in addition every finitely generated \emph{A/AeA}-module has finite projective dimension over $A$, then there is a singular equivalence $\D_{\sg}(A)\stackrel{\simeq}{\longrightarrow}\D_{\sg}(eAe)$.
\end{Cor}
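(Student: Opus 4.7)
The plan is to deduce this statement from the preceding Corollary~\ref{prop5.3}, applied to the $k$-algebra $A$ (viewed as a dg algebra concentrated in degree zero) and the idempotent $e\in A$. The hypothesis $\mathrm{p.d.}\,Ae_{eAe}<\infty$ is exactly $Ae\in\per(eAe)$, so Corollary~\ref{prop5.3} supplies a short exact sequence $\D_{\sg}(B)\xra{\w{i_*}}\D_{\sg}(A)\xra{\w{j^*}}\D_{\sg}(eAe)$, where $B$ is the non-positive dg $k$-algebra produced by Proposition~\ref{prop:recollement-induced-by-idempotent} with $\h^0(B)\cong A/AeA$. Since $Ae$ is a direct summand of $A$ as a left $A$-module, the derived tensor $?\lten_A Ae$ is just the ordinary tensor, so on $\D_\fg(A)=\D^b(\mod A)$ the functor $j^*$ agrees with (the derived extension of) the Schur functor $S_e$. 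It therefore remains to identify the essential image, up to direct summands, of $\w{i_*}$ with $\thick\langle q(\mathcal{B}_e)\rangle$.

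For this I would use devissage. Given $Y\in\D_\fg(B)$, smart truncation presents $Y$ as an iterated extension of its cohomology modules $\h^p(Y)[-p]$. Each such single-degree dg $B$-module must be annihilated by every element of $B^{<0}$ (for degree reasons) and by the boundary $d(B^{-1})$ (by the Leibniz rule), so its $B$-action factors through $\h^0(B)=A/AeA$, placing $i_*(\h^p(Y)[-p])$ in $\mathcal{B}_e[-p]$. This shows $i_*(\D_\fg(B))\subseteq\thick_{\D_\fg(A)}(\mathcal{B}_e)$. Conversely, any $M\in\mathcal{B}_e$ acquires a dg $B$-module structure via the canonical dg algebra projection $B\to\h^0(B)\cong A/AeA$, and $i_*$ returns $M$ viewed as an $A$-module; hence equality holds. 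Applying the quotient functor $q\colon\D_\fg(A)\to\D_{\sg}(A)$ then identifies, up to direct summands, the essential image of $\w{i_*}$ with $\thick\langle q(\mathcal{B}_e)\rangle$.

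Combining these two observations, the short exact sequence of Corollary~\ref{prop5.3} yields the desired triangle equivalence $\D_{\sg}(A)/\thick\langle q(\mathcal{B}_e)\rangle\simeq\D_{\sg}(eAe)$ induced by $S_e$. For the final assertion, the extra hypothesis says exactly that $\mathcal{B}_e\subseteq\per(A)$, which forces $q(\mathcal{B}_e)=0$, hence $\thick\langle q(\mathcal{B}_e)\rangle=0$, so $\w{j^*}$ already gives a triangle equivalence $\D_{\sg}(A)\simeq\D_{\sg}(eAe)$. The main delicate point in the argument is the identification of the image of $\w{i_*}$; it relies crucially on the non-positivity of $B$ from Proposition~\ref{prop:recollement-induced-by-idempotent}(b), which ensures that single-degree dg $B$-modules are precisely shifted $A/AeA$-modules.
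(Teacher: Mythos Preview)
Your proposal is correct and follows the same overall plan as the paper: apply Corollary~\ref{prop5.3} and then identify, up to direct summands, the essential image of $\w{i_*}$ with $\thick\langle q(\mathcal{B}_e)\rangle$. The difference lies in how this identification is carried out. The paper argues abstractly: by Lemma~\ref{lem:half-recollement-for-per-etc} the second row of the recollement restricts to a short exact sequence on $\D_\fd$, so the essential image of $i_*|_{\D_\fd(B)}$ equals $\ke(j^*|_{\D_\fd(A)})$, and then \cite[Lemma 2.2]{Chen09} identifies this kernel with $\thick(\mathcal{B}_e)$. You instead run a concrete d\'evissage, using the non-positivity of $B$ (Proposition~\ref{prop:recollement-induced-by-idempotent}(b)) to see that every object of $\D_\fg(B)$ is an iterated extension of shifted $\h^0(B)=A/AeA$-modules. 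Your route is more self-contained (no external lemma from \cite{Chen09}) but relies on one unstated compatibility: that the composite $A\xra{f} B\twoheadrightarrow \h^0(B)\cong A/AeA$ is the canonical projection, which is implicit in the construction of $B$. For the ``moreover'' part, the paper shows $\D_\sg(B)=0$ via $i^*i_*\cong\mathrm{id}$, whereas you argue directly that $q(\mathcal{B}_e)=0$; these are equivalent.
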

Note that \cite[Theorem 3.1]{Chen10} and \cite[Theorem 2.1]{Chen09} are stated more generally for left Noetherian rings.
\begin{proof}
First note that $\thick(\mathcal{B}_e)$ is exactly $\ke (j^*\colon \D_{\fd}(A)\to\D_{\fd}(eAe))$ (\cite[lemma 2.2]{Chen09}), thus it is triangle equivalent to $\D_{\fd}(B)$, by Lemma~\ref{lem:half-recollement-for-per-etc}. So $\thick\langle q(\mathcal{B}_e)\rangle$ is triangle equivalent to $\D_{\sg}(B)$ up to taking direct summands. Then the first statement follows from Corollary \ref{prop5.3}.
The second statement follows from the first one, because
 under the additional assumption,
 $i_{*}(\D_{\fd}(B))\subset \per(A)$, and hence, $\D_{\fd}(B)=i^{*}i_{*}(\D_{\fd}(B))\subset \per(B)$. Therefore $\D_{\sg}(B)=0$.
\end{proof}

\medskip

\subsection{Application to homological epimorphisms}\

In view of Lemma~\ref{lem:recollement-induced-by-homological-epimorphism}, the following result is an immediate consequence of Theorem~\ref{Thm:recoltoDsg}.

\begin{Cor}
\label{cor:s.e.s.-of-Dsg-homological-epi}
Let $f\colon A\rightarrow B$ be a homological epimorphism of proper dg $k$-algebras. Assume $B_A\in\per(A)$. Then there is a short exact sequence of triangulated categories:
\[
\xymatrix{
\D_{\sg}(B)\ar[r] & \D_{\sg}(A) \ar[r] & \D_{\sg}(\cEnd_A(X)),
}
\]
where $X$ is a $\K$-projective resolution of $\mathrm{Cone}(f)[-1]$.
\end{Cor}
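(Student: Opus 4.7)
The plan is to deduce this corollary directly from Theorem~\ref{Thm:recoltoDsg} by producing a recollement of unbounded derived categories with the required properness and one-step-downward extension. Since $f\colon A\to B$ is a homological epimorphism of dg algebras and $X$ is a $\K$-projective resolution of $\mathrm{Cone}(f)[-1]$, the hypothesis $B_A\in\per(A)$ immediately gives $X_A\in\per(A)$ (it sits in a triangle with $A$ and $B$ over $A$). Thus the second part of Lemma~\ref{lem:recollement-induced-by-homological-epimorphism} applies and yields a recollement
\[
\xymatrixcolsep{4pc}\xymatrix{\D(B) \ar[r]|{i_*} &\D(A) \ar@/_1pc/[l]_{i^*} \ar@/^1pc/[l]^{i^!} \ar[r]  &\D(\cEnd_A(X)) \ar@/^-1pc/[l] \ar@/^1pc/[l]}
\]
which extends one step downwards.

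The key verification is that the three dg algebras $A$, $B$ and $\cEnd_A(X)$ are all proper over $k$. For $A$ and $B$ this is the assumption. For $\cEnd_A(X)$, note that by construction $\cEnd_A(X)=\cHom_A(X,X)$ with $X$ a $\K$-projective dg $A$-module, so by formula \eqref{eq:morphism-in-derived-category} we have
\[
\h^*(\cEnd_A(X))\cong\bigoplus_{p\in\mathbb{Z}}\Hom_{\D(A)}(X,X[p])\cong\h^*\RHom_A(X,X).
\]
Since $X_A\in\per(A)$ and $A$ is proper (so that $\per(A)\subset \D_{\fg}(A)$, giving $X\in\D_{\fg}(A)$), Lemma~\ref{lem:description-of-Dfd} applied with $P=X$ shows that this $k$-module is finitely generated. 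Hence $\cEnd_A(X)$ is a proper dg $k$-algebra.

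With properness in hand, Theorem~\ref{Thm:recoltoDsg} applies to the recollement above and produces the desired short exact sequence
\[
\xymatrix{
\D_{\sg}(B)\ar[r] & \D_{\sg}(A) \ar[r] & \D_{\sg}(\cEnd_A(X)).
}
\]
I expect no substantial obstacle; the only point requiring a small argument is the properness of $\cEnd_A(X)$, which follows cleanly from the fact that $X$ is both compact and finitely generated over $A$ via Lemma~\ref{lem:description-of-Dfd}. Everything else is a direct citation of already-established results in the paper.
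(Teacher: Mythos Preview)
Your proof is correct and follows exactly the approach the paper intends: apply Lemma~\ref{lem:recollement-induced-by-homological-epimorphism} to obtain the recollement extending one step downwards, then invoke Theorem~\ref{Thm:recoltoDsg}. The paper's proof is the single sentence ``In view of Lemma~\ref{lem:recollement-induced-by-homological-epimorphism}, the following result is an immediate consequence of Theorem~\ref{Thm:recoltoDsg},'' so your additional paragraph verifying that $\cEnd_A(X)$ is proper (via $X\in\per(A)\subset\D_{\fg}(A)$ and Lemma~\ref{lem:description-of-Dfd}) simply makes explicit a check the paper leaves to the reader.
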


\begin{Cor}[{\cite[Theorem]{Chen14}}]
Assume that $k$ is a field. Let $A$ be a finite-dimensional $k$-algebra and $J$ be an ideal of $A$ such that the canonical homomorphism $A\to A/J$ is a homological epimorphism. Assume that $J$ has finite projective dimension as an $A$-$A$-bimodule. Then there is a triangle equivalence $\D_{\sg}(A)\stackrel{\simeq}\to \D_{\sg}(A/J)$.
\end{Cor}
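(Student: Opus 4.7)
The plan is to apply Corollary~\ref{cor:s.e.s.-of-Dsg-homological-epi} to the canonical projection $f\colon A\to A/J$ and to show that the third term of the resulting short exact sequence vanishes, which forces the first arrow to be a triangle equivalence whose quasi-inverse delivers the claim.

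Write $A^{e}:=A\otimes_{k}A^{\oop}$. Because $J$ has finite projective dimension as $A^{e}$-module, I would choose a bounded resolution $X\to J$ by finitely generated projective $A^{e}$-modules, so that $X\in\per(A^{e})$. Every finitely generated projective $A^{e}$-module is, when restricted on the right, a summand of a finite free right $A$-module; in particular such $X$ is $\K$-projective and perfect over $A$ on the right. Hence $X$ is a $\K$-projective resolution of $\mathrm{Cone}(f)[-1]\simeq J$ and $X_{A}\in\per(A)$. The short exact sequence $0\to J\to A\to A/J\to 0$ of right $A$-modules then yields $(A/J)_{A}\in\per(A)$, and Corollary~\ref{cor:s.e.s.-of-Dsg-homological-epi} produces the short exact sequence
\[
\D_{\sg}(A/J)\xrightarrow{\bar{i}_{*}}\D_{\sg}(A)\xrightarrow{\bar{j}^{*}}\D_{\sg}(\Gamma),\qquad \Gamma:=\cEnd_{A}(X).
\]

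The remaining task is to establish $\D_{\sg}(\Gamma)=0$, equivalently $\gl\Gamma<\infty$. Properness of $\Gamma$ is immediate: $X$ is a bounded complex of finite-dimensional $A$-bimodules and $A$ is finite-dimensional, so $\cHom_{A}(X,X)$ has bounded, finite-dimensional cohomology. My strategy for finite global dimension is to prove that $\Gamma$ is smooth, $\Gamma\in\per(\Gamma^{e})$, and then invoke the standard consequence that smoothness plus properness forces $\per(\Gamma)=\D_{\fg}(\Gamma)$: any $M\in\D_{\fg}(\Gamma)$ may be represented by a bounded complex of finite-dimensional $k$-modules, hence $M\otimes_{k}\Gamma\in\per(\Gamma)$, and a perfect $\Gamma^{e}$-resolution of $\Gamma$ writes $M=M\lten_{\Gamma}\Gamma$ as a finite iterated extension of summands of $M\otimes_{k}\Gamma$. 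The key input for smoothness is $X\in\per(A^{e})$: combined with the natural identification $\Gamma\cong X\lten_{A}X^{\vee}$ in $\D(\Gamma^{e})$, where $X^{\vee}:=\RHom_{A}(X,A)$ (which holds because $X_{A}\in\per(A)$), and the canonical dg algebra map $A^{e}\to\Gamma^{e}$, this should exhibit $\Gamma$ as a perfect $\Gamma^{e}$-module.

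The principal obstacle is making the smoothness step rigorous, as one must carefully track how the left and right $\Gamma$-actions interact through the tensor product $X\lten_{A}X^{\vee}$ before the perfectness over $\Gamma^{e}$ falls out of $X\in\per(A^{e})$. With $\D_{\sg}(\Gamma)=0$ in hand, the short exact sequence collapses, making $\bar{i}_{*}$ a triangle equivalence; passing to its quasi-inverse yields the claimed equivalence $\D_{\sg}(A)\xrightarrow{\simeq}\D_{\sg}(A/J)$.
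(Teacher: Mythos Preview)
Your reduction to Corollary~\ref{cor:s.e.s.-of-Dsg-homological-epi} and the goal of showing $\D_{\sg}(\Gamma)=0$ for $\Gamma=\cEnd_A(X)$ is exactly the paper's strategy. The divergence is in how you attack $\D_{\sg}(\Gamma)=0$.

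Your route via smoothness of $\Gamma$ has a genuine gap, and not just a missing detail. The sketch you give invokes only the hypothesis $X\in\per(A^{e})$ together with the identification $\Gamma\cong X\lten_A X^{\vee}$ and the map $A^{e}\to\Gamma^{e}$. But this cannot suffice: take $X=A\otimes_k A$, the free bimodule. Then $X\in\per(A^{e})$ trivially, yet $\Gamma=\cEnd_A(A\otimes_k A)\cong M_{\dim_k A}(A)$ is Morita equivalent to $A$, so $\Gamma$ is smooth only if $A$ is. The homological-epimorphism hypothesis on $A\to A/J$ is doing real work here, and your smoothness outline never uses it. It is not clear how one would feed that hypothesis into a $\Gamma^{e}$-perfectness argument.

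The paper bypasses smoothness entirely and argues inside the recollement. Given $M\in\D_{\fd}(\Gamma)$, Lemma~\ref{lem:half-recollement-for-per-etc} says the middle row restricts to $\D_{\fd}$, so one can choose $N\in\D_{\fd}(A)$ with $j^{*}N\cong M$. Applying $N\lten_A(-)$ to the triangle $J\to A\to A/J\to J[1]$ and comparing with the canonical triangle $j_!j^{*}N\to N\to i_*i^{*}N\to j_!j^{*}N[1]$ gives $j_!j^{*}N\cong N\lten_A J$. Now $J\in\per(A^{e})$ and $N$ is perfect over $k$, so $N\lten_A J\in\thick_{\D(A)}(N\otimes_k A)\subset\per(A)$. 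Finally $j^{*}$ preserves compacts (the recollement extends one step downwards), whence
\[
M\cong j^{*}j_!M\cong j^{*}(N\lten_A J)\in\per(\Gamma).
\]
This is where the bimodule-perfectness of $J$ is actually spent, and it is considerably shorter than establishing smoothness.
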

\begin{proof}
Let $B=A/J$. Since $J$ has finite projective dimension as a right $A$-module, it follows that $B_A\in\per(A)$. By Corollary~\ref{cor:s.e.s.-of-Dsg-homological-epi}, there is a short exact sequence
\[
\xymatrix{
\D_{\sg}(B)\ar[r] & \D_{\sg}(A) \ar[r] & \D_{\sg}(\cEnd_A(\mathbf{p}J)).
}
\]
Here $\mathbf{p}J$ is a projective resolution of $J$ over $A$. It is enough to show $\D_{\sg}(\cEnd_{A}(\mathbf{p}J))$=0.
Since $\mathbf{p}J\in \per(A)$, it follows that $\cEnd_{A}(\mathbf{p}J)$ is proper. For any $M\in \D_{\fd}(\cEnd_{A}(\mathbf{p}J))$, we only need to check that $M \in  \per(\cEnd_{A}(\mathbf{p}J))$. Consider the induced recollement \eqref{equ:end(X)}. The middle row restricts to a short exact sequence of $\D_{\fd}$ by Lemma~\ref{lem:half-recollement-for-per-etc}. In particular,  there exists $N\in \D_{\fd}(A)$ such that $j^{*}(N)=M$.
Applying $N\lten_{A}?$ to the triangle $J\ra A \ra B \ra J[1]$, we obtain a triangle $N\lten_{A}J \ra N \ra N\lten_{A}B \ra N\lten_AJ[1]$ in $\D(A)$.  Note that the morphism $N\ra N\lten_{A}B$ is induced by $f$, so $N\lten_{A}J \cong j_{!}j^{*}(N)$ by \eqref{equ:end(X)}. Also note that $N\lten_{A}J\in \per(A)$ because $J\in\per(A^{\oop}\ten_k A)$. Then $M\cong j^{*}j_{!}(M)\cong j^{*}j_{!}j^{*}(N)\in \per(\cEnd_{A}(\mathbf{p}J))$, because $j^{*}$ sends compact objects to compact objects.
\end{proof}

\medskip

\subsection{A dual statement}
The following is a `dual' of Theorem~\ref{Thm:recoltoDsg}.  

\begin{Thm}\label{Thm:recoltoDsg-dual}
Assume that $k$ is a field. Let $A$, $B$ and $C$ be proper dg $k$-algebras together with a recollement \eqref{recol:dg-algebras-unbounded}. Assume that the recollement extends one step upwards. Then the second row induces a short exact sequence
\begin{align}
\xymatrix{
 \D_{\fd}(B)/\thick(DB) \ar[r]^{\bar{i}_{*}}& \D_{\fd}(A)/\thick(DA)  \ar[r]^{\bar{j}^{*}} &  \D_{\fd}(C)/\thick(DC).
 }\label{equ:sesDD}
 \end{align}
 \end{Thm}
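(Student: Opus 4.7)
The strategy is to mirror the proof of Theorem~\ref{Thm:recoltoDsg} but with $\thick(D{-})$ playing the role of $\per$, the bridge being Nakayama duality.

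\textbf{Step 1.} I would first produce, for the middle row $(i_*,j^*)$ of \eqref{recol:dg-algebras-unbounded}, short exact sequences
\[
\D_\fd(B)\xrightarrow{i_*}\D_\fd(A)\xrightarrow{j^*}\D_\fd(C)
\quad\text{and}\quad
\thick(DB)\xrightarrow{i_*}\thick(DA)\xrightarrow{j^*}\thick(DC).
\]
Let $i^\sharp \colon \D(B)\to\D(A)$ and $j^\sharp\colon\D(A)\to\D(C)$ denote the left adjoints of $i^*$ and $j_!$ furnished by the upward extension. Then $i^*, j_!, i^\sharp, j^\sharp$ all preserve $\per$ (combining Lemma~\ref{adjoint} with the existence of their right adjoints $i_*, j^*, i^*, j_!$), so by Lemma~\ref{FkeepcompactGkeepDbmod} the functors $i^*, j_!, i_*, j^*$ all preserve $\D_\fd$. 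Applying Lemma~\ref{lem:restricting-half-recollement} to the left recollement of $\D(A)$ formed by $\{i^*,j_!\}$ and $\{i_*,j^*\}$ yields the first short exact sequence. Next, Lemma~\ref{lem:Nakayama-functor}(b) applied to each of the adjoint triples $(i^\sharp,i^*,i_*)$, $(j^\sharp,j_!,j^*)$, $(i^*,i_*,i^!)$ and $(j_!,j^*,j_*)$ shows that $i_*, j^*, i^!, j_*$ all preserve $\thick(D{-})$; restricting the right recollement of $\D(A)$ formed by $\{i_*,j^*\}$ and $\{i^!,j_*\}$ to $\thick(D{-})$ (using the dual of Lemma~\ref{lem:restricting-half-recollement}, noting that a right recollement of $\mathcal{T}$ by $\mathcal{T}',\mathcal{T}''$ is a left recollement of $\mathcal{T}$ by $\mathcal{T}'',\mathcal{T}'$) gives the second.

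\textbf{Step 2.} These short exact sequences together with the inclusions $\thick(D{-})\subseteq\D_\fd(-)$ assemble into the commutative diagram
\[
\xymatrix{
\thick(DB) \ar@{_{(}->}[d]\ar[r]^{i_*} & \thick(DA) \ar@{_{(}->}[d]\ar[r]^{j^*} & \thick(DC)\ar@{_{(}->}[d]\\
\D_\fd(B)\ar[r]^{i_*} & \D_\fd(A)\ar[r]^{j^*} & \D_\fd(C),
}
\]
placing us in the setup of Lemma~\ref{lem:ses-of-triangle-quotients}. I would apply part (b): the right adjoints of $i_*|_{\thick(DB)}$ and $j^*|_{\thick(DA)}$ within $\thick(DA)$ are the restrictions of $i^!$ and $j_*$, and the required Hom-vanishing
\[
\Hom_{\D_\fd(A)}(i_*(Y),j_*(Z))\cong\Hom_{\D(B)}(Y,i^!j_*(Z))=0, \qquad Y\in\D_\fd(B),\,Z\in\thick(DC),
\]
is immediate from the standard identity $i^!j_*=0$ in any recollement. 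Lemma~\ref{lem:ses-of-triangle-quotients}(b) then produces the desired short exact sequence.

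\textbf{Main obstacle.} Step~1 is the delicate part. The upward-extension hypothesis is essential precisely because it upgrades $i^*$ and $j_!$ to middle terms of the adjoint triples $(i^\sharp,i^*,i_*)$ and $(j^\sharp,j_!,j^*)$; only then does Lemma~\ref{lem:Nakayama-functor}(b) place $i_*$ and $j^*$ inside $\thick(D{-})$. Without this, $i_*(DB)$ has no reason to lie in $\thick(DA)$ and the upper row of the diagram in Step~2 cannot be written at all. Once Step~1 is secured, Step~2 is formally parallel to the proof of Theorem~\ref{Thm:recoltoDsg}.
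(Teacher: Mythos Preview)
Your argument is correct and follows the same route as the paper: the paper's one-line proof cites Lemma~\ref{lem:half-recollement-for-per-etc} and Lemma~\ref{lem:ses-of-triangle-quotients}, and your Step~1 is precisely an unpacking of Lemma~\ref{lem:half-recollement-for-per-etc} (via Lemmas~\ref{FkeepcompactGkeepDbmod}, \ref{lem:Nakayama-functor}, \ref{lem:restricting-half-recollement}) applied to the four-row ladder obtained from the upward extension, while Step~2 is exactly the application of Lemma~\ref{lem:ses-of-triangle-quotients}(b). Your identification of the needed Hom-vanishing $\Hom(i_*(-),j_*(-))=0$ via $j^*i_*=0$ is the right one, and your remark that the upward extension is what makes $i_*$ and $j^*$ land in $\thick(D{-})$ is also the point of the hypothesis.
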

 \begin{proof}
 This follows from Lemma~\ref{lem:half-recollement-for-per-etc} and Lemma~\ref{lem:ses-of-triangle-quotients}.
 \end{proof}

%

\bigskip

\section{A localisation theorem for the AGK-category}
\label{section:cluster}\

In this section, we establish a result for homologically smooth dg algebras, which is `Koszul dual' to Theorem \ref{Thm:recoltoDsg}.

\medskip

Assume that $k$ is a field and let $A$ be a dg $k$-algebra. Assume that $A$ is homologically smooth, \ie $A\in \per(A^{\rm e})$, where $A^{\rm e}=A^{\oop}\ten_k A$. Then it is shown in the proof of \cite[Lemma 4.1]{Keller10} that $\D_{\fd}(A)\subset \per(A)$. We define the \emph{AGK-category} of $A$ as the triangle quotient
\[
\D_{\agk}(A):=\per(A)/\D_{\fd}(A).
\]
Note that if $A$ satisfies the following conditions
 \begin{enumerate}[\rm(1)]
 \item $A$ is non-positive;
 \item $A$ is homologically smooth;
 \item $\h^{0}(A)$ is finite-dimensional;
 \item   $A$ is \emph{bimodule $(d+1)$-Calabi-Yau} for $d\ge 1$, $i.e.$  there is an isomorphism in $\D(A^{\rm e})$
 \[ \RHom_{A^{\rm e}}(A, A^{\rm e})\simeq A[-d-1]. \]
 \end{enumerate}
Then the AGK-category of $A$ is the \emph{cluster category} $\C_{A}$ (\cite{Amiot09,Guo11}).

\medskip

The main result of this section is

\begin{Thm}\label{Thm:sescluster}
Let $A$, $B$ and $C$ be homologically smooth dg $k$-algebras together with a recollement \eqref{recol:dg-algebras-unbounded} which extends one step downwards. Then the second row induces a short exact sequence
\begin{equation}\label{equ:sesAGK}
\xymatrix{
 \D_{\agk}(B) \ar[r]^{\ov{i_{*}}} & \D_{\agk}(A) \ar[r]^{\ov{j^{*}}} & \D_{\agk}(C).
 }
\end{equation}
\end{Thm}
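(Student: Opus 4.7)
The plan is to mirror the strategy of the proof of Theorem~\ref{Thm:recoltoDsg}, applying Lemma~\ref{lem:ses-of-triangle-quotients} to the framework with $\mathcal{T}_{(\cdot)} = \per(\cdot)$ and $\mathcal{S}_{(\cdot)} = \D_{\fd}(\cdot)$. The crucial preliminary observation, already noted just before the theorem statement, is that homological smoothness gives $\D_{\fd}(-) \subset \per(-)$, so each $\D_{\agk}$ is a genuine triangle quotient of the type fed into Lemma~\ref{lem:ses-of-triangle-quotients}.

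First I would invoke Lemma~\ref{lem:half-recollement-for-per-etc}. The hypothesis that the recollement extends one step downwards provides precisely the ladder of height $2$ required by that lemma; it then delivers simultaneously an honest short exact sequence $\per(B) \to \per(A) \to \per(C)$ (from the upper two rows restricting to $\per$) and an honest short exact sequence $\D_{\fd}(B) \to \D_{\fd}(A) \to \D_{\fd}(C)$ (from the middle two rows restricting to $\D_{\fd}$, using that $k$ is a field, so $\D_{\fg}=\D_{\fd}$). These slot into the commutative diagram~\eqref{ses-of-triangulated-categories} as needed, with the inclusions being the inclusions $\D_{\fd}(-) \hookrightarrow \per(-)$.

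Next I would verify condition (b) of Lemma~\ref{lem:ses-of-triangle-quotients}. The right recollement on $\D_{\fd}$ supplied by Lemma~\ref{lem:half-recollement-for-per-etc} says exactly that $i^!$ and $j_*$ restrict to $\D_{\fd}$ and are right adjoints of $i_*|_{\D_{\fd}(B)}$ and $j^*|_{\D_{\fd}(A)}$ respectively. The required orthogonality $\Hom_{\per(A)}(i_*X, j_*Y) = 0$ for $X \in \per(B)$ and $Y \in \D_{\fd}(C)$ is immediate from adjunction and the recollement relation $j^* \circ i_* = 0$: $\Hom_{\D(A)}(i_*X, j_*Y) \cong \Hom_{\D(C)}(j^* i_* X, Y) = 0$.

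Finally, Lemma~\ref{lem:ses-of-triangle-quotients}(b) delivers a short exact sequence of quotients up to direct summands, and since the top row of \eqref{ses-of-triangulated-categories} in our setting (the $\D_{\fd}$-row) is an honest short exact sequence, the last clause of that lemma upgrades the conclusion to an honest short exact sequence $\D_{\agk}(B) \to \D_{\agk}(A) \to \D_{\agk}(C)$, with the connecting functors induced by $i_*$ and $j^*$. I do not anticipate any real obstacle here: the argument is essentially the ``Koszul dual'' of that of Theorem~\ref{Thm:recoltoDsg}, obtained by interchanging the roles of $\per$ and $\D_{\fd}$, with all substantial technical work already performed in Lemmas~\ref{lem:half-recollement-for-per-etc} and~\ref{lem:ses-of-triangle-quotients}.
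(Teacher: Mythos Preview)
Your proposal is correct and follows exactly the same approach as the paper, which simply cites Lemma~\ref{lem:half-recollement-for-per-etc} and Lemma~\ref{lem:ses-of-triangle-quotients}. You have unpacked the one-line proof faithfully: the ladder of height~$2$ yields honest short exact sequences on $\per$ and $\D_{\fd}$ via Lemma~\ref{lem:half-recollement-for-per-etc}, the right recollement on $\D_{\fd}$ supplies the right adjoints needed for condition~(b) of Lemma~\ref{lem:ses-of-triangle-quotients}, and the orthogonality follows from $j^*\circ i_*=0$ exactly as you wrote.
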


\begin{proof}
This follows from Lemma~\ref{lem:half-recollement-for-per-etc} and Lemma~\ref{lem:ses-of-triangle-quotients}.
\end{proof}


\smallskip

Similar to Corollary \ref{Cor:gldim}, we have
\begin{Cor}\label{Cor:per/Dfd}
Let $A$, $B$ and $C$ be homologically smooth dg $k$-algebras together with a recollement \eqref{recol:dg-algebras-unbounded}. Then
\begin{itemize}
\item[(a)] $j^*$ induces a triangle equivalence $\D_{\agk}(A) \simeq \D_{\agk}(C)$ if and only if $B$ is proper .
\item[(b)] $i_*$ induces a triangle equivalence $\D_{\agk}(B)\simeq \D_{\agk}(A)$ if and only if $j^*(A)\in\per(C)$ and $C$ is proper.
\item[(c)] $A$ is proper if and only if $B$ and $C$ are proper.
\end{itemize}
\end{Cor}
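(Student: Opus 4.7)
The plan is to mirror the proof of Corollary~\ref{Cor:gldim} with the roles of $\per$ and $\D_{\fd}$ swapped, using Theorem~\ref{Thm:sescluster} as the engine. In each of the three parts the pattern is identical: from the local hypothesis derive that the recollement extends one step downwards, apply the short exact sequence \eqref{equ:sesAGK}, and then read off the triviality (or non-triviality) of one of the three AGK-categories.

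Before starting I would record two background facts. First, for a homologically smooth dg $k$-algebra $R$,
\[
\D_{\agk}(R)=0\iff\per(R)=\D_{\fd}(R)\iff R\in\D_{\fd}(R)\iff R\text{ is proper},
\]
where the last equivalence uses smoothness ($\D_{\fd}(R)\subset\per(R)$) and d\'evissage via Lemma~\ref{lem:description-of-Dfd}. Second, by Lemma~\ref{lem:extending-recollement-downward} the condition ``extends one step downwards'' is equivalent to $j^*(A)\in\per(C)$, and also to $i_*(B)\in\per(A)$. Together these reduce each implication below to a short check about where a single distinguished object lives.

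For (a)~$(\Leftarrow)$, properness of $B$ gives $i_*(B)\in\D_{\fd}(A)\subset\per(A)$ via Lemma~\ref{FkeepcompactGkeepDbmod} (applied to $(i^*,i_*)$, with $i^*$ preserving compacts by Lemma~\ref{adjoint}) and smoothness of $A$; then Theorem~\ref{Thm:sescluster} yields \eqref{equ:sesAGK}, which collapses because $\D_{\agk}(B)=0$. Conversely, the existence of the induced functor $\overline{j^*}$ forces $j^*(\per A)\subset\per C$, so the extension holds, and then its being an equivalence forces $\D_{\agk}(B)=0$, hence $B$ is proper. Part~(b) is entirely symmetric, with the condition $j^*(A)\in\per(C)$ entering as the explicit form of the extension hypothesis. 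For (c)~$(\Rightarrow)$, $A$ proper gives $\per(A)=\D_{\fd}(A)$, so $j^*(\per A)\subset\D_{\fd}(C)\subset\per(C)$ delivers the extension; then \eqref{equ:sesAGK} combined with $\D_{\agk}(A)=0$ forces $\D_{\agk}(C)=0$ on the right and, since $\overline{i_*}$ is fully faithful up to summands into the zero category, $\D_{\agk}(B)=0$ on the left. The converse of~(c) is immediate from (a). I anticipate no serious obstacle; the only bookkeeping point is invoking Lemma~\ref{FkeepcompactGkeepDbmod} consistently in both $F$- and $G$-directions across the three functors $i_*$, $j^*$, $j_!$, and handling the ``up to direct summands'' clause in~(c) when reading the short exact sequence backwards.
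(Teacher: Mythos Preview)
Your proposal is correct and, for parts (a), (b), and (c)$(\Leftarrow)$, tracks the paper's argument essentially line for line.

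For (c)$(\Rightarrow)$ you take a genuinely different route. The paper argues the two halves separately and directly: first $C$ is proper because $C\cong j^*j_!(C)$ and $j_!(C)\in\per(A)\subset\D_{\fd}(A)$; then $B$ is proper by examining the canonical triangle $j_!j^*(A)\to A\to i_*i^*(A)$, observing that both outer terms lie in $\D_{\fd}(A)$ so that $i_*i^*(A)\in\D_{\fd}(A)$, and then invoking the detection clause of Lemma~\ref{FkeepcompactGkeepDbmod} (with $i^*(A)$ a classical generator of $\per(B)$) to conclude $i^*(A)\in\D_{\fd}(B)$. Your argument instead first secures the downward extension via $j^*(\per A)\subset\D_{\fd}(C)\subset\per(C)$ and then reads both $\D_{\agk}(B)=0$ and $\D_{\agk}(C)=0$ off the short exact sequence \eqref{equ:sesAGK} with vanishing middle term. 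This is a cleaner use of the machinery: it avoids the explicit triangle manipulation and the generator-detection step, at the cost of invoking Theorem~\ref{Thm:sescluster} once more. The paper's route has the minor advantage of showing $C$ proper without using smoothness of $C$, but in the stated setting that is moot.
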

\begin{proof}
(a) Assume that $B$ is proper. Then $\per(B)=\D_{\fd}(B)$, so
\[
i_*(\per(B))=i_*(\D_{\fd}(B))\subset \D_{\fd}(A)\subset \per(A),
\]
namely, $i_*$ sends compact objects to compact objects. By Lemma~\ref{lem:extending-recollement-downward}, the given recollement extends one step downwards. But $\D_{\agk}(B)=0$, which, together with the short exact sequence in Theorem~\ref{Thm:sescluster}, implies that $j^*$ induces a triangle equivalence $\D_{\agk}(A) \simeq \D_{\agk}(C)$.

Assume that $j^*$ induces a triangle equivalence $\D_{\agk}(A)\simeq \D_{\agk}(C)$. Then $j^*$ necessarily restricts to $\per$, so the recollement extends one step downwards by Lemma~\ref{lem:extending-recollement-downward}. Therefore by Theorem~\ref{Thm:sescluster}, there is a short exact sequence \eqref{equ:sesAGK}, implying that $\D_{\agk}(B)=0$, \ie $B$ is proper.

(b) Assume that $i_*$ induces a triangle equivalence $\D_{\agk}(B)\simeq \D_{\agk}(A)$. Then necessarily $i_*$ restricts to $\per$, so by Lemma~\ref{lem:extending-recollement-downward}, $j^*(A)\in\per(C)$ and the recollement extends one step downwards. By Theorem~\ref{Thm:sescluster}, there is a short exact sequence \eqref{equ:sesAGK}, forcing $\D_{\agk}(C)$ to be trivial, \ie $C$ is proper.

Assume that $j^*(A)\in\per(C)$ and $C$ is proper, \ie $\D_{\agk}(C)=0$. Then $j^*$ sends compact objects to compact objects, so by Lemma~\ref{lem:extending-recollement-downward} the recollement extends one step downwards, and by  Theorem~\ref{Thm:sescluster}, there is a short exact sequence \eqref{equ:sesAGK}, showing that $i_*$ induces a triangle equivalence $\D_{\agk}(B)\simeq \D_{\agk}(A)$.

(c) If $B$ and $C$ are proper, then by (a) we have $\D_{\agk}(A)\simeq\D_{\agk}(C)\simeq 0$. If $A$ is proper, then $C$ is also proper, because  $C\cong j^{*}j_{!}(C)\subset \D_{\fd}(C)$.
Moreover, there is a triangle in $\D(A)$
\[
\xymatrix{
i_*i^*(A)\ar[r] & A \ar[r] & j_!j^*(A) \ar[r] & i_*i^*(A)[1].
}
\]
Since both $A$ and $C$ are proper, it follows that both $A$ and $j_!j^*(A)$ belong to $\D_{\fd}(A)$, so does $i_*i^*(A)$. Therefore $i^*(A)$ belongs to $\D_{\fd}(B)$, by Lemma~\ref{FkeepcompactGkeepDbmod} and the fact that $i^*(A)$ is a classical generator of $\per(B)$. So $\per(B)\subset \D_{\fd}(B)$, that is, $B$ is proper.
\end{proof}

\begin{Ex}\label{Ex:cycle}
Let $A$ be the graded path algebra of a graded cyclic quiver
\begin{center}
 \begin{tikzpicture}
 \node (1) at (60:1.8) {$\circ$};
 \node (2) at (30:1.8) {$\circ$};
 \node (3) at (360:1.8) {$\circ$};
 \node (5) at (300:1.8) {$\circ$};
 \node (6) at (270:1.8) {$\circ$};
 \node (7) at (240:1.8) {$\circ$};
 \node (9) at (180:1.8) {$\circ$};
 \node (10) at (150:1.8) {$\circ$};
 \node (11) at (120:1.8) {$\circ$};
 \node (12) at (90:1.8) {$\circ$};
 \node at (45:2) {\scriptsize$\alpha_{1}$};
 \node at (15:2) {\scriptsize$\alpha_{2}$};
 \node at (75:2) {\scriptsize$\alpha_{n}$};
 \node at (107:2.0) { \scriptsize$\alpha_{n-1}$};
 \draw[<-] (2) -- (1);
 \draw[<-] (1) -- (12);
 \draw[<-] (3) -- (2);
 \draw[<-] (330:1.8) -- (3);
 \draw[dotted] (5) -- (330:1.8);
 \draw[<-] (6) -- (5);
 \draw[<-] (7) -- (6);
 \draw[dotted] (210:1.8) -- (7);
 \draw[<-] (9) -- (210:1.8);
 \draw[<-] (10) to(9);
 \draw[<-] (11) -- (10);
 \draw[<-] (12) -- (11);

 \node at (60:2.1) {\scriptsize$1$};
 \node at (30:2.1) {\scriptsize$2$};
 \node at (360:2.1) {\scriptsize$3$};
 \node at (90:2.1) {\scriptsize$n$};
 \node at (130:2.2) {\scriptsize $n-1$};
 \node at (155:2.3) {\scriptsize$n-2$};
 \node[left=2pt] at (9) {\scriptsize$n-3$};
\end{tikzpicture}
\end{center}
Let $d=\sum_{i=1}^{n}|\za_{i}|$.  Let $C=e_{1}Ae_{1}$ and $B=A/Ae_1A$. Then $A$, $B$ and $C$ are all homologically smooth and moreover, there is a recollement  \eqref{recol:dg-algebras-unbounded} by \cite[Lemma 7.2]{KalckYang18b}. Notice that $B$ is the graded path algebra of a graded quiver of type $\mathbb{A}_{n-1}$ with linear orientation, in particular, it is proper. So
by Corollary \ref{Cor:per/Dfd}, there a triangle equivalence
\[ \D_{\agk}(A)  \simeq \D_{\agk}(k[x]),  \]
where $|x|:=d$. If $d=0$, then $\per(k[x])\simeq\K^b(\proj k[x])$ and $\D_{\agk}(k[x])\simeq \D^b(\mod k)$. 
Next, assume that $d\neq 0$. Then $\per(k[x])$ is a Krull--Schmidt category and as in \cite[Theorem 4.1]{KellerYangZhou09}, a complete set of indecomposable objects of $\per(k[x])$ is given by
\[
k[x][p],~~(k[x]/(x^n))[p],~~n\in\mathbb{N},~p\in\mathbb{Z},
\]
and a complete set of indecomposable objects of $\D_{\fd}(k[x])$ is given by
\[
(k[x]/(x^n))[p],~~n\in\mathbb{N},~p\in\mathbb{Z}.
\]
Moreover,
\[
\Hom_{\per(k[x])}(k[x],k[x][p])=\h^p(k[x])=\begin{cases} k & \text{if $p\in\mathbb{N}$ is a multiple of $d$,}\\ 0 & \text{otherwise.}\end{cases}
\]
Direct computation shows that $\D_{\agk}(k[x])$ is a semisimple category with simple objects $k[x][p]$, $p=0,\ldots,|d|-1$. To summarise, there are triangle equivalences
\[  \D_{\agk}(A)  \simeq \D_{\agk}(k[x])\simeq \D^{\rm b}({\rm mod} k)/[d].\]
\end{Ex}

\bigskip

\section{A generalisation of Theorem \ref{Thm:recoltoDsg}}

Let  $\mathcal{T}$, $\mathcal{T}'$ and $\mathcal{T}''$ be compactly generated triangulated $k$-categories together with a recollement
\begin{align}
\label{recol:abstract-cat}
\xymatrixcolsep{4pc}
\xymatrix{
\mathcal{T}' \ar[r]|-{i_*} &\mathcal{T} \ar@/_1pc/[l]|-{i^*} \ar@/^1pc/[l]|-{i^!} \ar[r]|-{j^*}  &\mathcal{T}''. \ar@/^-1pc/[l]|-{j_!} \ar@/^1pc/[l]|-{j_*}  }
\end{align}
Assume that $\mathcal{T}$ is algebraic.
Let $\cal S$, $\cal S'$ and $\cal S''$ be skeletally small triangulated subcategories of $\mathcal{T}$, $\mathcal{T}'$ and $\mathcal{T}''$, which   contain $\mathcal{T}^c$, $\mathcal{T}'^c$ and $\mathcal{T}''^c$, respectively.

The main result of this section is:
\begin{Thm}
\label{thm:s.e.s.-of-quotients-in-the-first-row}
Assume further that the first row of the above recollement restricts to a short exact sequence (respectively, a short exact sequence up to direct summands)
\[
\xymatrix{
\cal S' & \cal S\ar[l]_{i^*} & \cal S''\ar[l]_{j_!}.
}
\]
Then there is a short exact sequence (respectively, a short exact sequence up to direct summands):
\[
\xymatrix{
\mathcal{S}'/\mathcal{T}'^c &  \mathcal{S}/\mathcal{T}^c \ar[l]_{\bar{i}^*} & \mathcal{S}''/\mathcal{T}''^c \ar[l]_{\bar{j}_!}.
}
\]
\end{Thm}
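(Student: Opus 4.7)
The plan is to apply Lemma~\ref{lem:ses-of-triangle-quotients} to the commutative square
\[
\xymatrix@R=2pc@C=3pc{
\mathcal{T}''^c \ar[r]^{j_!} \ar@{^{(}->}[d] & \mathcal{T}^c \ar[r]^{i^*} \ar@{^{(}->}[d] & \mathcal{T}'^c \ar@{^{(}->}[d] \\
\mathcal{S}'' \ar[r]^{j_!} & \mathcal{S} \ar[r]^{i^*} & \mathcal{S}'
}
\]
in which the verticals are the given inclusions. The bottom row is a short exact sequence up to direct summands by hypothesis. For the top row, the functors $j_!$ and $i^*$ are left adjoints of functors preserving infinite direct sums, and hence send compact objects to compact objects by Lemma~\ref{adjoint}; the restricted sequence is then a short exact sequence up to direct summands by Neeman's localisation theorem for compactly generated triangulated categories (see the discussion preceding Lemma~\ref{lem:extending-recollement-downward}).

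The goal is to verify the hypotheses of condition (b) of Lemma~\ref{lem:ses-of-triangle-quotients}, with $G=j_!$ and $F=i^*$. The required Hom-vanishing $\Hom(j_!\mathcal{S}'', i_*\mathcal{T}'^c) = 0$ is immediate from the recollement identity $j^*\circ i_* = 0$: by adjunction, $\Hom_{\mathcal{T}}(j_!X, i_*Y) \cong \Hom_{\mathcal{T}''}(X, j^* i_* Y) = 0$. The composition $\bar{i}^* \circ \bar{j}_! = 0$ holds since $i^* \circ j_! = 0$, and the identification $(\mathcal{S}/\mathcal{T}^c)/\thick(\bar{j}_!(\mathcal{S}''/\mathcal{T}''^c)) \simeq \mathcal{S}'/\mathcal{T}'^c$ up to direct summands follows by iterating Verdier quotients and using the SES for the $\mathcal{S}$'s together with the SES on compacts.

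The main obstacle is the existence of right adjoints of the restricted top-row functors $j_!|_{\mathcal{T}''^c}$ and $i^*|_{\mathcal{T}^c}$. The natural candidates $j^*$ and $i_*$ land inside $\mathcal{T}''^c$ and $\mathcal{T}^c$ precisely when the recollement extends one step downwards (Lemma~\ref{lem:extending-recollement-downward})---an assumption deliberately not made in the theorem. This is where the algebraic hypothesis on $\mathcal{T}$ is essential: it allows one to realise the whole recollement via dg algebras, and, following the factorisation strategy in the proof of Lemma~\ref{lem:ses-of-triangle-quotients}(b), to use the recollement triangle $j_!j^*(Z)\to Z \to i_*i^*(Z) \to j_!j^*(Z)[1]$ in the ambient $\mathcal{T}$ (which always exists) rather than inside $\mathcal{T}^c$. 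The Hom-vanishing then guarantees that the required factorisations of morphisms descend modulo $\mathcal{T}^c$ to equivalent roofs inside $\mathcal{S}''/\mathcal{T}''^c$, which establishes fullness of $\bar{j}_!$ and hence completes the proof.
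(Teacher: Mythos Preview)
Your diagnosis of the obstacle is exactly right, but your proposed fix in the final paragraph does not work, and this is a genuine gap.

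Concretely, in your application of Lemma~\ref{lem:ses-of-triangle-quotients}(b), the outer row is $\mathcal{S}''\xrightarrow{j_!}\mathcal{S}\xrightarrow{i^*}\mathcal{S}'$ and the inner row is $\mathcal{T}''^c\xrightarrow{j_!}\mathcal{T}^c\xrightarrow{i^*}\mathcal{T}'^c$. To prove that $\bar{j}_!$ is full, take $X,Y\in\mathcal{S}''$ and a roof $j_!(X)\xleftarrow{s}Z\xrightarrow{g}j_!(Y)$ in $\mathcal{S}$ with $U=\mathrm{Cone}(s)\in\mathcal{T}^c$. The relevant triangle is $j_!j^*(U)\to U\to i_*i^*(U)\to j_!j^*(U)[1]$; following the proof of Lemma~\ref{lem:ses-of-triangle-quotients}(b), the replacement roof $j_!(X)\xleftarrow{j_!(s')}j_!(Z')\xrightarrow{j_!(g')}j_!(Y)$ has $\mathrm{Cone}(j_!(s'))\cong i_*i^*(U)$. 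For this new roof to represent a morphism in $\mathcal{S}/\mathcal{T}^c$, you need $i_*i^*(U)\in\mathcal{T}^c$. But that is precisely the statement that $i_*$ preserves compacts, which by Lemma~\ref{lem:extending-recollement-downward} is equivalent to the recollement extending one step downwards. Working in the ambient $\mathcal{T}$ does not help: the Hom-vanishing $\Hom(j_!\mathcal{S}'',i_*\mathcal{T}'^c)=0$ lets you factor morphisms, but it does not force $i_*i^*(U)$ to lie in $\mathcal{T}^c$, so the factorisation you produce is not a morphism in the quotient category $\mathcal{S}/\mathcal{T}^c$. Your last sentence asserts that ``the required factorisations descend'', but there is no argument for this and it is false in general.

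The paper's proof uses the algebraic hypothesis in a much heavier way and follows an entirely different route. One first replaces the recollement by one of derived categories of dg categories $\D\mathcal{A}',\D\mathcal{A},\D\mathcal{A}''$. One then takes standard lifts $(\mathcal{B}',X'),(\mathcal{B},X),(\mathcal{B}'',X'')$ of $\mathcal{S}',\mathcal{S},\mathcal{S}''$, so that $\per\mathcal{B}\simeq\thick(\mathcal{S})$, etc. The key construction is a \emph{new} recollement $\D\mathcal{B}'\leftarrow\D\mathcal{B}\leftarrow\D\mathcal{B}''$ lifting the given short exact sequence on the $\mathcal{S}$'s (this uses the converse to Neeman's theorem, Lemma~\ref{lem:converse-of-Neeman's-localisation-theorem}). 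Combined with the vertical recollements $\D\mathcal{C}\to\D\mathcal{B}\to\D\mathcal{A}$ coming from Proposition~\ref{prop:quotient==>recollement}, one obtains a recollement $\D\mathcal{C}'\leftarrow\D\mathcal{C}\leftarrow\D\mathcal{C}''$, and Neeman's localisation theorem applied to \emph{this} recollement yields the short exact sequence on $\per\mathcal{C}',\per\mathcal{C},\per\mathcal{C}''$, which are identified (up to summands) with the desired quotients $\mathcal{S}'/\per\mathcal{A}'$, etc. The point is that the obstruction you found is circumvented not by a clever roof argument, but by manufacturing a genuinely new recollement in which the required adjoints exist automatically.
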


\medskip

\subsection{Neeman's localisation theorem}\

The following is a well-known reformulation of Neeman's localisation theorem \cite[Theorem  2.1]{Neeman92a}.

\begin{Thm}[{\cite[Theorem  2.1]{Neeman92a}}]
\label{thm:Neeman's-localisation-theorem}
Assume that there is a recollement of compactly generated triangulated $k$-categories:
\[
 \xymatrixcolsep{4pc}\xymatrix{\mathcal{T'} \ar[r]|{i_*} &\mathcal{T} \ar@/_1pc/[l]|{i^*} \ar@/^1pc/[l]|{i^!} \ar[r]|{j^*}  &\mathcal{T''}. \ar@/^-1pc/[l]|{j_!} \ar@/^1pc/[l]|{j_*}
  }
\]
Then the first row restricts to a short exact sequence up to  direct summands
\[
\xymatrix{
\mathcal{T'}^c & \mathcal{T}^c \ar[l]_{i^*} & \mathcal{T''}^c\ar[l]_{j_!}.
}
\]
\end{Thm}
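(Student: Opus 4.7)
The plan is to apply Lemma~\ref{lem:ses-of-triangle-quotients} to a suitable commutative diagram of short exact sequences built from the hypothesis together with Neeman's localisation theorem. Applying Theorem~\ref{thm:Neeman's-localisation-theorem} to the recollement \eqref{recol:abstract-cat} yields that the first row restricts to a short exact sequence (up to direct summands) on the compacts $\mathcal{T}'^c\xleftarrow{i^*}\mathcal{T}^c\xleftarrow{j_!}\mathcal{T}''^c$. Combined with the hypothesised restriction to $\mathcal{S}'\xleftarrow{i^*}\mathcal{S}\xleftarrow{j_!}\mathcal{S}''$ and the inclusions $\mathcal{T}^c\subseteq\mathcal{S}$, $\mathcal{T}'^c\subseteq\mathcal{S}'$, $\mathcal{T}''^c\subseteq\mathcal{S}''$, this yields the commutative diagram
\[
\xymatrix@R=1.5pc{
\mathcal{T}''^c \ar@{^{(}->}[d] \ar[r]^{j_!} & \mathcal{T}^c \ar@{^{(}->}[d] \ar[r]^{i^*} & \mathcal{T}'^c \ar@{^{(}->}[d]\\
\mathcal{S}'' \ar[r]^{j_!} & \mathcal{S} \ar[r]^{i^*} & \mathcal{S}',
}
\]
in which both horizontal rows are short exact sequences (up to direct summands), supplying precisely the data required by Lemma~\ref{lem:ses-of-triangle-quotients}.

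I would then invoke Lemma~\ref{lem:ses-of-triangle-quotients}(b) applied to this diagram, with $\mathcal{S},\mathcal{S}',\mathcal{S}''$ in the role of the ambient categories and $\mathcal{T}^c,\mathcal{T}'^c,\mathcal{T}''^c$ in the role of the subcategories. The restriction $j_!|_{\mathcal{T}''^c}\colon\mathcal{T}''^c\to\mathcal{T}^c$ admits the right adjoint $j^*|_{\mathcal{T}^c}$, which is well defined because $j^*$ preserves compactness (Lemma~\ref{adjoint}). The required orthogonality $\Hom_{\mathcal{T}}(j_!(\mathcal{S}''),i_*i^*(\mathcal{T}^c))=0$ is automatic from $i^*\circ j_!=0$, computed in the ambient category $\mathcal{T}$. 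To identify the final quotient, I would compute via iterated Verdier quotients: $(\mathcal{S}/\mathcal{T}^c)/\thick(\bar{j}_!(\mathcal{S}''/\mathcal{T}''^c))\simeq \mathcal{S}/\thick(\mathcal{T}^c\cup j_!(\mathcal{S}''))$; the bottom-row equivalence $\mathcal{S}/j_!(\mathcal{S}'')\simeq\mathcal{S}'$ up to direct summands (via $i^*$), together with the identity $i^*(\mathcal{T}^c)=\mathcal{T}'^c$ up to direct summands (from the top row), then identifies this with $\mathcal{S}'/\mathcal{T}'^c$ up to direct summands.

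The main obstacle is the existence of the right adjoint of $i^*|_{\mathcal{T}^c}\colon\mathcal{T}^c\to\mathcal{T}'^c$ as a functor valued in $\mathcal{T}^c$. The natural candidate is $i_*|_{\mathcal{T}'^c}$, but this requires $i_*$ to preserve compactness, which is equivalent to the recollement extending one step downwards (Lemma~\ref{lem:extending-recollement-downward}) and is not assumed here. This is precisely where the algebraic hypothesis on $\mathcal{T}$ is decisive: by Keller's theorem one can replace $\mathcal{T}$ by a dg model $\D(A)$ with $\mathcal{T}^c\simeq\per(A)$, and re-run the octahedral-axiom construction from the proof of Lemma~\ref{lem:ses-of-triangle-quotients}(b) using the adjunction triangle $j_!j^*(U)\to U\to i_*i^*(U)\to j_!j^*(U)[1]$ in $\D(A)$, for $U\in\mathcal{T}^c$ (the third term lies in $\mathcal{S}$ automatically, as the cone of two objects of $\mathcal{S}$, but need not itself be compact), and exploit the ``up to direct summands'' clause to absorb the possible non-compactness of $i_*i^*(U)$ when lifting roofs in $\mathcal{S}/\mathcal{T}^c$ to roofs in $\mathcal{S}''/\mathcal{T}''^c$.
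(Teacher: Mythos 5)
Your proposal does not address Theorem~\ref{thm:Neeman's-localisation-theorem} at all. That theorem is the classical Neeman localisation result: a recollement of compactly generated triangulated categories restricts, on compact objects, to a short exact sequence up to direct summands. The paper simply cites it from \cite[Theorem 2.1]{Neeman92a} and gives no proof, so there is nothing in the paper to compare against. Your argument instead targets Theorem~\ref{thm:s.e.s.-of-quotients-in-the-first-row}: you introduce triangulated subcategories $\mathcal{S}$, $\mathcal{S}'$, $\mathcal{S}''$ containing the compacts, invoke the ``algebraic hypothesis on $\mathcal{T}$'', and aim for the conclusion $\mathcal{S}'/\mathcal{T}'^c \leftarrow \mathcal{S}/\mathcal{T}^c \leftarrow \mathcal{S}''/\mathcal{T}''^c$ --- none of which appear in the statement you were asked to prove. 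Worse, your very first sentence reads ``Applying Theorem~\ref{thm:Neeman's-localisation-theorem} to the recollement\ldots'', so as a proof of Theorem~\ref{thm:Neeman's-localisation-theorem} the argument would be circular. A genuine proof of that theorem must establish, without recourse to it, that after Verdier-quotienting $\mathcal{T}^c$ by $j_!(\mathcal{T}''^c)$ the induced functor to $\mathcal{T}'^c$ is fully faithful and dense up to direct summands; that is precisely Neeman's Theorem~2.1 and it rests on a nontrivial analysis of compactness under Bousfield localisation.

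Even read as a sketch for Theorem~\ref{thm:s.e.s.-of-quotients-in-the-first-row}, the argument has a real gap exactly where you concede you are hand-waving. You correctly note that Lemma~\ref{lem:ses-of-triangle-quotients}(b) would need $i^*|_{\mathcal{T}^c}$ to have a right adjoint landing in $\mathcal{T}^c$, which fails because $i_*$ need not preserve compactness. But the proposed fix --- re-running the octahedron with the adjunction triangle $j_!j^*(U)\to U\to i_*i^*(U)$ for $U\in\mathcal{T}^c$ and ``absorbing'' non-compactness via direct summands --- does not go through: that construction produces $\mathrm{Cone}(t[-1])\cong i_*i^*(U)[-1]$, which must lie in the subcategory being inverted (here $\mathcal{T}^c$) for $t[-1]$ to become invertible in $\mathcal{S}/\mathcal{T}^c$, yet it only lies in $\mathcal{S}$ and need not be a direct summand of any compact, so ``up to direct summands'' does not rescue the step. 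The paper's actual proof of Theorem~\ref{thm:s.e.s.-of-quotients-in-the-first-row} avoids this obstruction by a genuinely different route: it chooses dg lifts $\mathcal{B}$, $\mathcal{B}'$, $\mathcal{B}''$ of $\mathcal{S}$, $\mathcal{S}'$, $\mathcal{S}''$, assembles the $3\times3$ grid of recollements of derived categories of dg categories in diagram~\eqref{diag:big-diagram} (using Proposition~\ref{prop:quotient==>recollement} and Lemma~\ref{lem:converse-of-Neeman's-localisation-theorem}), identifies $\mathcal{S}/\mathcal{T}^c$, etc., with $\per\mathcal{C}$, etc., and only then quotes Theorem~\ref{thm:Neeman's-localisation-theorem} one level up; the whole point of the dg lift is to manufacture an honest recollement whose restriction to compacts yields the desired exact sequence, rather than trying to coax a half-recollement on compacts out of one that does not exist.
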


We have the following converse.

\begin{Lem}
\label{lem:converse-of-Neeman's-localisation-theorem}
Assume that there is a diagram of triangle functors between compactly generated triangulated $k$-categories\emph{:}
\[
 \xymatrixcolsep{4pc}\xymatrix{\mathcal{T'} \ar[r]|{i_*=i_!} &\mathcal{T} \ar@/_1pc/[l]|{i^*} \ar@/^1pc/[l]|{i^!} \ar[r]|{j^*=j^!}  &\mathcal{T''} \ar@/^-1pc/[l]|{j_!} \ar@/^1pc/[l]|{j_*}
  }
\]
satisfying (R1) in Definition~\ref{defn:recollement}. If the first row restricts to a short exact sequence up to direct summands
\[
\xymatrix{
\mathcal{T'}^c & \mathcal{T}^c \ar[l]_{i^*} & \mathcal{T''}^c\ar[l]_{j_!},
}
\]
then the above diagram is a recollement.
\end{Lem}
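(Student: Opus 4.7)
The plan is to reduce the statement to showing that the top row $\mathcal{T}' \xleftarrow{i^*} \mathcal{T} \xleftarrow{j_!} \mathcal{T}''$ of the diagram is a short exact sequence of triangulated categories; by Proposition~\ref{prop:recollement-vs-ses}(b), this will be enough to deduce that the whole diagram is a recollement.

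First I would promote the compact-level hypotheses to global ones. The functors $i^*$, $j_!$ and $j^*$ all preserve arbitrary direct sums (as left adjoints, with $j^*$ being the left adjoint of $j_*$). Hence the class of $Y\in\mathcal{T}''$ on which the unit $Y\to j^*j_!Y$ is an isomorphism is closed under shifts, triangles and direct sums, and contains $\mathcal{T}''^c$ because the hypothesis forces $j_!|_{\mathcal{T}''^c}$ to be fully faithful. By compact generation of $\mathcal{T}''$ this class equals $\mathcal{T}''$, so $j_!$ is globally fully faithful. The same devissage lifts $i^*\circ j_!=0$ from $\mathcal{T}''^c$ to $\mathcal{T}''$; combined with the adjunction identity $\Hom(j_!Y,i_*X')\cong \Hom(i^*j_!Y,X')\cong \Hom(Y,j^*i_*X')$, this also yields (R3), i.e.\ $j^*\circ i_*=0$.

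The key step is to identify $\im(j_!)$ with a Bousfield kernel and to show that $i^*$ induces an equivalence from the Bousfield quotient to $\mathcal{T}'$. Set $\mathcal{L}:=\Loc_{\mathcal{T}}(j_!(\mathcal{T}''^c))$. Since $\mathcal{T}''$ is generated by its compacts and $j_!$ preserves triangles, shifts and direct sums, we get $\mathcal{L}=j_!(\mathcal{T}'')$, and full faithfulness of $j_!$ upgrades this to a triangle equivalence $\mathcal{T}''\stackrel{\simeq}{\to}\mathcal{L}$. By Theorem~\ref{thm:Neeman's-localisation-theorem}, $\mathcal{T}/\mathcal{L}$ is compactly generated and the canonical functor $\mathcal{T}^c/(\mathcal{L}\cap\mathcal{T}^c)\to (\mathcal{T}/\mathcal{L})^c$ is an equivalence up to direct summands; moreover, full faithfulness of $j_!$ on compacts forces $\mathcal{L}\cap\mathcal{T}^c = j_!(\mathcal{T}''^c)$. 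The vanishing $i^*\circ j_!=0$ means $i^*$ factors through $\pi\colon \mathcal{T}\to\mathcal{T}/\mathcal{L}$ as a triangle functor $\bar{i}^*\colon \mathcal{T}/\mathcal{L}\to\mathcal{T}'$, and on compacts the composition $\mathcal{T}^c/j_!(\mathcal{T}''^c)\to (\mathcal{T}/\mathcal{L})^c\xrightarrow{\bar{i}^*}\mathcal{T}'^c$ is, by hypothesis, an equivalence up to direct summands onto $\mathcal{T}'^c$. Since both $(\mathcal{T}/\mathcal{L})^c$ and $\mathcal{T}'^c$ are idempotent complete, they both agree with the idempotent completion of $\mathcal{T}^c/j_!(\mathcal{T}''^c)$, and $\bar{i}^*|_{(\mathcal{T}/\mathcal{L})^c}$ becomes a genuine triangle equivalence.

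To globalise, observe that each $i_*X'$ is $\mathcal{L}$-local: $\Hom_{\mathcal{T}}(j_!Y,i_*X')=\Hom_{\mathcal{T}'}(i^*j_!Y,X')=0$ for $Y\in\mathcal{T}''$. Consequently $\pi\circ i_*$ is a right adjoint to $\bar{i}^*$, so $\bar{i}^*$ preserves direct sums; a standard homotopy-colimit argument (write every object as a homotopy colimit of compacts and use that $\bar{i}^*$ is an equivalence on compacts) upgrades the compact-level equivalence to $\bar{i}^*\colon \mathcal{T}/\mathcal{L}\stackrel{\simeq}{\to}\mathcal{T}'$. Combined with the equivalence $\mathcal{T}''\simeq\mathcal{L}$, this shows that the top row is a short exact sequence of triangulated categories, and Proposition~\ref{prop:recollement-vs-ses}(b) then delivers the full recollement. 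The main obstacle is the passage from the compact equivalence to a global one, which is controlled entirely by Neeman's theorem on compacts of a Bousfield quotient and by the idempotent completeness of the subcategories of compact objects.
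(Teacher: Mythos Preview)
Your proof is correct and follows essentially the same approach as the paper: show $j_!$ is fully faithful and $i^*\circ j_!=0$ by passing from compacts to all objects, factor $i^*$ through the Verdier quotient $\mathcal{T}/j_!(\mathcal{T}'')$, use Neeman's localisation theorem together with idempotent completeness of compacts to identify the induced functor on compacts as an equivalence, globalise via preservation of coproducts, and conclude with Proposition~\ref{prop:recollement-vs-ses}(b). The only cosmetic differences are that the paper cites \cite[Lemma~4.2~b),~c)]{Keller94} in place of your explicit d\'evissage arguments, and obtains coproduct-preservation of $\overline{i^*}$ from the density and coproduct-preservation of $\pi$ rather than by exhibiting $\pi\circ i_*$ as a right adjoint.
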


\begin{proof}
Since $j_!|_{\mathcal{T}''^c}\colon \mathcal{T}''^c\to\mathcal{T}^c$ is fully faithful, it follows by \cite[Lemma 4.2 b)]{Keller94} that $j_!$ is fully faithful. Notice that the composition $i^*\circ j_!\colon \mathcal{T}'' \ra \mathcal{T}'$ commutes with infinite direct sums and sends compact objects to zero, so it is the zero functor. Therefore there exists a unique triangle functor $\overline{i^*}\colon\mathcal{T}/j_!(\mathcal{T}'') \ra \mathcal{T}'$ such that $\overline{i^*}\circ \pi=i^*$
\[
\xymatrix{
&\mathcal{T}\ar[rd]^{i^*} \ar[ld]_{\pi}&\\
\mathcal{T}/j_!(\mathcal{T}'')\ar[rr]^{\overline{i^*}} && \mathcal{T}'
}
\]
where $\pi$ is the quotient functor.
Since $\pi$ is dense and commutes with infinite direct sums, it follows that $\overline{i^*}$ also commutes with infinite direct sums. By our assumption and by Theorem~\ref{thm:Neeman's-localisation-theorem}, there is a commutative diagram
\[
\xymatrix{
& \mathcal{T}^c/j_!(\mathcal{T}''^c)\ar[ld]\ar[rd]&\\
(\mathcal{T}/j_!(\mathcal{T}''))^c \ar[rr]^{\overline{i^*}|} && \mathcal{T}'^c
}
\]
such that both the slanted functors are triangle equivalences up to direct summands. It follows that $\overline{i^*}|_{(\mathcal{T}/j_!(\mathcal{T}''))^c}\colon (\mathcal{T}/j_!(\mathcal{T}''))^c\to \mathcal{T}'^c$ is a triangle equivalence. By \cite[Lemma 4.2 c)]{Keller94}, $\overline{i^*}$ itself is a triangle equivalence. So the first row of the given diagram is a short exact sequence and the diagram is a recollement by Proposition~\ref{prop:recollement-vs-ses}.
\end{proof}

\medskip

\subsection{Derived categories of dg categories}\

DG $k$-categories can be considered as dg $k$-algebras with several objects. For a small dg $k$-category $\mathcal{A}$, we will denote by $\D\mathcal{A}$ the derived category of dg modules over $\mathcal{A}$ and by $\per\mathcal{A}$ the perfect derived category of $\mathcal{A}$. We will adopt the notation in Section~\ref{ss:derived-category-of-dg-algebras} and in \cite{Keller94,Keller06}.

Following \cite[Appendix B]{Drinfeld04}, we say that a dg $k$-category $\A$ is \emph{semi-free} if for any $X,Y\in\A$, the complex $\Hom_\A(X,Y)$ is the union of an increasing sequence of subcomplexes $M_i$, $i\in \mathbb{N}\cup \{0\}$, such that each quotient $M_{i+1}/M_i$ is free over $k$. By \cite[Lemma B.5]{Drinfeld04}, for any dg $k$-category $\A$, there is a semi-free dg $k$-category $\tilde{\A}$ together with a quasi-isomorphism $F\colon\tilde{\A}\to\A$, which we call a \emph{semi-free resolution} of $\A$. Let $X$ be the dg $\tilde{\A}$-$\A$-bimodule defined by $X(\tilde{A},A):=\A(A,F(\tilde{A}))$. Then the induction functor $\mathbf{L}T_X\colon \D(\tilde{\A})\to \D(\A)$ is a triangle equivalence with quasi-inverse the restriction functor $\mathbf{L}T_{Y}\colon\D(\A)\to\D(\tilde{\A})$, by \cite[Lemma 6.1 a) and Lemma 6.2 b)]{Keller94}, where $Y$ is the dg $\A$-$\tilde{\A}$-bimodule defined by $Y(A,\tilde{A})=\A(F(\tilde{A}),A)$.

Let $\mathcal{U}$ be a full subcategory of $\D\mathcal{A}$. A \emph{standard lift} of $\mathcal{U}$ is a pair $(\mathcal{B},X)$, where $\mathcal{B}$ is a full dg subcategory of $\C_{\dg}\mathcal{A}$ consisting of precisely one $\K$-projective resolution for each object of $\mathcal{U}$, and $X$ is the dg $\mathcal{B}$-$\mathcal{A}$-bimodule defined by
$X(A, B):=B(A)$ for $A\in\mathcal{A}$ and $B\in\mathcal{B}$. Then the triangle functor $\mathbf{L}T_{X}\colon \D\mathcal{B}\to\D\mathcal{A}$ restricts to an equivalence $\per\mathcal{B}\to\thick_{\D\mathcal{A}}(\mathcal{U})$. See \cite[Section 7]{Keller94}. Moreover, if $\A$ is semi-free, then we may assume that $X$ is $\K$-projective over $\B^{\oop}\ten_k \A$, in this case, $X$ is then $\K$-projective over $\B$ and $\mathbf{L}T_X\cong T_X$, by \cite[Lemma 6.2 a)]{Keller94}.

\begin{Prop}[{\cite[Theorem 1]{Yang12}}]
\label{prop4.2}
\label{prop:quotient==>recollement}
Let $\mathcal{A}$ be a dg $k$-category. Let $\mathcal{S}$ be a full subcategory of $\D\mathcal{A}$, such that $\per\mathcal{A} \subset \thick(\mathcal{S})$.
Let $(\mathcal{B},X)$ be a standard lift of $\mathcal{S}$. Then there is a dg $k$-category $\mathcal{C}$, a dg functor $F\colon\mathcal{B}\to\mathcal{C}$ and a recollement:
 \[
 \xymatrixcolsep{6pc}\xymatrix{\D\mathcal{C} \ar[r]|{i_*} &\D\mathcal{B} \ar@/_1pc/[l]_{i^*} \ar@/^1pc/[l]^{i^!} \ar[r]|{\mathbf{R}H_{X^T}\cong \mathbf{L}T_X}  &\D\mathcal{A}, \ar@/^-1pc/[l]_{\mathbf{L}T_{X^T}} \ar@/^1pc/[l]^{\mathbf{R}H_{X}}
} \]
where the triple $(i^*,i_*,i^!)$ is induced by the dg functor $F$.
\end{Prop}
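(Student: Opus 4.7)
My plan is to realise the middle functor $\LT_X\colon \D\mathcal{B}\to \D\mathcal{A}$ as the quotient $j^*$ of a recollement by first identifying it as a Bousfield localisation and then producing a dg enhancement of its kernel. First I would note that $\LT_X={?}\lten_\mathcal{B}X$ has right adjoint $\RH_X=\RHom_\mathcal{A}(X,?)$ by general nonsense. The standard-lift property recalled just before the proposition says exactly that $\LT_X(B^\wedge)\cong B$ in $\D\mathcal{A}$ for each $B\in\mathcal{B}$, so $\LT_X$ restricts to a triangle equivalence $\per\mathcal{B}\xrightarrow{\sim}\thick_{\D\mathcal{A}}(\mathcal{S})$. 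Combined with the hypothesis $\per\mathcal{A}\subset\thick(\mathcal{S})$, this shows that $\LT_X(\per\mathcal{B})$ contains a set of compact generators of $\D\mathcal{A}$, hence the essential image of $\LT_X$ generates $\D\mathcal{A}$ as a localising subcategory.

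Next I would verify that $\RH_X$ is fully faithful, i.e.\ that the counit $\eta\colon \LT_X\RH_X\to \mathrm{id}_{\D\mathcal{A}}$ is an isomorphism. Because both functors preserve coproducts and $\D\mathcal{A}$ is compactly generated by $\per\mathcal{A}$, it suffices to show that $\Hom_{\D\mathcal{A}}(P,\eta_M)$ is an isomorphism for every $P\in\per\mathcal{A}$ and every $M\in\D\mathcal{A}$. Writing $P\cong \LT_X(Q)$ with $Q\in\per\mathcal{B}$ (possible by the previous step) and applying adjunction together with the full faithfulness of $\LT_X|_{\per\mathcal{B}}$, I would check that the chain of isomorphisms $\Hom_{\D\mathcal{A}}(P,\LT_X\RH_X(M))\cong \Hom_{\D\mathcal{B}}(Q,\RH_X(M))\cong \Hom_{\D\mathcal{A}}(P,M)$ is compatible with $\eta$. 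Together with the previous step this yields a short exact sequence of triangulated categories $\ke(\LT_X)\hookrightarrow \D\mathcal{B}\twoheadrightarrow \D\mathcal{A}$.

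It then remains to exhibit a dg category $\mathcal{C}$ and a dg functor $F\colon\mathcal{B}\to\mathcal{C}$ such that $F$ induces a triangle equivalence $\D\mathcal{C}\xrightarrow{\sim}\ke(\LT_X)$ and supplies the claimed adjoint triple $(i^*,i_*,i^!)$. For this I would first replace $\mathcal{B}$ by a semi-free resolution $\tilde{\mathcal{B}}$ (cf.\ \cite[Lemma B.5]{Drinfeld04}), so that the bimodule constructions can be carried out strictly. Since $\ke(\LT_X)$ is a compactly generated triangulated category in its own right, Keller's dg-Morita theorem (\cite[Theorem 4.3]{Keller94}) produces a dg category $\mathcal{C}$ with $\D\mathcal{C}\simeq \ke(\LT_X)$. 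Concrete generators of $\ke(\LT_X)$ can be produced as cones of morphisms coming from a candidate left adjoint to $\LT_X$, and the desired dg functor $F$ is extracted via Drinfeld's dg-quotient construction as the universal enhancement realising the inclusion $\ke(\LT_X)\hookrightarrow \D\tilde{\mathcal{B}}$. With $F$ in hand, the adjoint triple $(i^*,i_*,i^!)$ arises from the general extension-restriction-coinduction adjointness along a dg functor, and Proposition~\ref{prop:recollement-vs-ses}(b) then assembles the six functors into the desired recollement.

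The main obstacle is the construction of $\mathcal{C}$ in the third step. Because $\LT_X$ does not in general preserve compactness --- $\LT_X(B^\wedge)=B$ lies in $\per\mathcal{A}$ only when the corresponding object of $\mathcal{S}$ is itself perfect --- the compact generators of $\ke(\LT_X)$ are not inherited from $\per\mathcal{B}$ and have to be built abstractly. Producing an explicit dg enhancement of these generators, and then verifying that the resulting $F$ realises the full adjoint triple on the left of the recollement (rather than merely a fully faithful inclusion of triangulated categories), is the technical heart of the proof.
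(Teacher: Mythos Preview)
The paper does not give its own proof of this proposition; it is quoted verbatim from \cite{Yang12} and used as a black box. So your proposal should be compared with the argument in \cite{Yang12}, whose overall shape is indeed the one you outline: verify that $\RH_X$ is fully faithful (your steps 1--4 are fine and coincide with the standard argument), and then produce a homological epimorphism $F\colon\mathcal{B}\to\mathcal{C}$ realising the left half of the recollement.

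There is, however, a genuine gap in your step 5. You assert that ``$\ke(\LT_X)$ is a compactly generated triangulated category in its own right'' and then invoke Keller's Morita theorem. This is not justified, and in the situations of interest it is typically \emph{false}: since $\LT_X$ need not preserve compactness (as you yourself note), compact objects of $\D\mathcal{B}$ do not restrict to $\ke(\LT_X)$, and there is no reason for $\ke(\LT_X)$ to have any nonzero compact objects at all. For instance, when $\mathcal{S}=\D_{\fg}(A)$ for a proper dg algebra $A$, the category $\ke(\LT_X)$ is a model for the ``big'' singularity category, which is well known not to be compactly generated in general. Consequently the appeal to \cite[Theorem~4.3]{Keller94} fails, and the subsequent sentence about ``concrete generators of $\ke(\LT_X)$ produced as cones coming from a candidate left adjoint'' is circular: the left adjoint $j_!$ is precisely what has not yet been constructed.

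The way \cite{Yang12} (following \cite{NicolasSaorin09}) circumvents this is to build $F\colon\mathcal{B}\to\mathcal{C}$ \emph{first}, without ever analysing $\ke(\LT_X)$ directly. One observes that $\RH_X(A^\wedge)=(X^T)^A$ lies in $\per\mathcal{B}$ for every $A\in\mathcal{A}$ (this is exactly the hypothesis $\per\mathcal{A}\subset\thick(\mathcal{S})$ combined with the lift equivalence $\per\mathcal{B}\simeq\thick(\mathcal{S})$); hence $\LT_{X^T}$ sends compact objects to compact objects, so $\RH_{X^T}$ preserves coproducts and is isomorphic to $\LT_X$. This already yields the adjoint triple $(\LT_{X^T},\LT_X,\RH_X)$, and in particular $j_!$, \emph{before} any dg quotient is taken. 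Now one applies \cite[Theorem~4]{NicolasSaorin09}: given a fully faithful $j_!$ with two right adjoints into the derived category of a small dg category $\mathcal{B}$, there exists a homological epimorphism $F\colon\mathcal{B}\to\mathcal{C}$, bijective on objects, whose induced triple $(i^*,i_*,i^!)$ completes the recollement. The upshot is that the construction of $\mathcal{C}$ is an \emph{output} of the existence of $j_!$, not a prerequisite for it, which is the reverse of the order you attempt.
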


\medskip

\subsection{Main result in terms of dg categories}\

In this subsection we show that Theorem~\ref{thm:s.e.s.-of-quotients-in-the-first-row} is equivalent to
\begin{Thm}\label{thm4.1}
\label{thm:s.e.s.-of-quotients-in-the-first-row-in-terms-of-derived-categories}
Let  $\A$, $\A'$ and $\A''$ be dg $k$-categories together with a recollement
\xymatrixcolsep{4pc}
\[ \xymatrix{\D(\A') \ar[r]|-{i_*} &\D(\A) \ar@/_1pc/[l]|-{i^*} \ar@/^1pc/[l]|-{i^!} \ar[r]|-{j^*}  &\D(\A''), \ar@/^-1pc/[l]|-{j_!} \ar@/^1pc/[l]|-{j_*}  }\]
where $i^*=T_V$ for a dg $\A$-$\A'$-bimodule $V$ which is $\K$-projective over $\A$, and $j_!=T_U$ for a dg $\A''$-$\A$-bimodule $U$ which is $\K$-projective over $\A''$.

Let $\cal S$, $\cal S'$ and $\cal S''$ be triangulated subcategories of $\D(\A)$, $\D(\A')$ and $\D(\A'')$, which contain $\per(\A)$, $\per(\A')$ and $\per(\A'')$, respectively, such that the first row of the above recollement restricts to a short exact sequence (respectively, a short exact sequence up to direct summands)
\begin{align}
\label{ses:algebraic-case-larger}
\xymatrix{
\cal S' & \cal S\ar[l]_{i^*} & \cal S''\ar[l]_{j_!}.
}
\end{align}
Then there is a short exact sequence (respectively, a short exact sequence up to direct summands):
\[ 
\xymatrix{
\cal S'/\per(\A') & \cal S/\per(\A) \ar[l]_{\bar{i}^*} & \cal S''/\per (\A'')\ar[l]_{\bar{j}_!}.
} 
\]
\end{Thm}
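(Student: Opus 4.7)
The plan is to use Proposition~\ref{prop:quotient==>recollement} to replace the triangulated subcategories $\mathcal{S}$, $\mathcal{S}'$ and $\mathcal{S}''$ by the perfect derived categories of auxiliary dg categories $\mathcal{C}$, $\mathcal{C}'$ and $\mathcal{C}''$, then to use the tensor descriptions of $i^*$ and $j_!$ to lift the given recollement between $\D(\A), \D(\A'), \D(\A'')$ to an analogous recollement between $\D(\mathcal{C}), \D(\mathcal{C}'), \D(\mathcal{C}'')$, and finally to extract the desired short exact sequence on compact objects via Neeman's localisation theorem and its converse, Lemma~\ref{lem:converse-of-Neeman's-localisation-theorem}.

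Concretely, first I would take standard lifts $(\mathcal{B}, X)$, $(\mathcal{B}', X')$ and $(\mathcal{B}'', X'')$ of $\mathcal{S}$, $\mathcal{S}'$ and $\mathcal{S}''$ and apply Proposition~\ref{prop:quotient==>recollement} to obtain recollements $\D(\mathcal{C})\to\D(\mathcal{B})\to\D(\A)$ (with their full adjoints) and the two primed analogues. By Theorem~\ref{thm:Neeman's-localisation-theorem}, the first row of each such recollement restricts on compact objects to a short exact sequence up to direct summands $\per(\mathcal{C})\xleftarrow{i^*}\per(\mathcal{B})\xleftarrow{j_!}\per(\A)$. Combined with the standard-lift identification $\per(\mathcal{B})\simeq\thick(\mathcal{S})$ induced fully faithfully by $\mathbf{L}T_X$ (under which $j_!(\per(\A))$ is matched with $\per(\A)\subset\thick(\mathcal{S})$, since $\mathbf{L}T_X\circ j_!\cong\mathrm{id}$), this yields a triangle equivalence $\per(\mathcal{C})\simeq\thick(\mathcal{S})/\per(\A)\simeq\mathcal{S}/\per(\A)$ up to summands, and likewise for the primed versions. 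Thus it suffices to produce a recollement of $\D(\mathcal{C})$ in terms of $\D(\mathcal{C}')$ and $\D(\mathcal{C}'')$ whose first row restricts on compact objects to the claimed short exact sequence $\per(\mathcal{C}')\leftarrow\per(\mathcal{C})\leftarrow\per(\mathcal{C}'')$ up to summands.

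To construct this recollement, I would first lift the given recollement to the $\mathcal{B}$-level. Since $i^* = T_V$ for a dg $\A$-$\A'$-bimodule $V$ that is $\K$-projective over $\A$, after replacing $V$ by a bimodule resolution $\K$-projective on both sides (which does not affect $T_V$ on derived categories), $T_V$ preserves $\K$-projectivity of dg modules; enlarging the standard lifts within their quasi-equivalence class if necessary, $T_V$ then induces a dg quasi-functor $\mathcal{B}\to\mathcal{B}'$, and an analogous construction based on $j_! = T_U$ yields a dg quasi-functor $\mathcal{B}''\to\mathcal{B}$. Composing with the dg functors $F, F', F''$ from Proposition~\ref{prop:quotient==>recollement} and passing to derived categories produces triangle functors $\D(\mathcal{C})\to\D(\mathcal{C}')$ and $\D(\mathcal{C}'')\to\D(\mathcal{C})$, both of which are derived tensor functors with bimodules and hence admit adjoints on both sides by Lemmas~\ref{lem:generating-adjoint-triple} and~\ref{lem:left-adjoint-of-tensor}. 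One then checks directly that the resulting diagram satisfies condition (R1) of Definition~\ref{defn:recollement} and, using the hypothesis on the restriction $\mathcal{S}'\xleftarrow{i^*}\mathcal{S}\xleftarrow{j_!}\mathcal{S}''$, that its first row restricts on compact objects to a short exact sequence $\per(\mathcal{C}')\leftarrow\per(\mathcal{C})\leftarrow\per(\mathcal{C}'')$ up to direct summands. Lemma~\ref{lem:converse-of-Neeman's-localisation-theorem} then promotes the diagram to a genuine recollement of compactly generated triangulated categories, and its first row, transported back through the identifications $\per(\mathcal{C}_\bullet)\simeq\mathcal{S}_\bullet/\per(\A_\bullet)$, is the required short exact sequence.

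The main obstacle is the coherence of the lifting step: one must arrange the dg quasi-functors between the $\mathcal{B}$'s and $\mathcal{C}$'s so that they are compatible with the full recollement data (and not merely with $i^*$ and $j_!$ taken individually), so that the induced diagram between $\D(\mathcal{C}'), \D(\mathcal{C}), \D(\mathcal{C}'')$ genuinely satisfies condition (R1) and not only the weaker property that its component functors come in adjoint pairs. This demands careful bookkeeping with $\K$-projective bimodule resolutions in the spirit of \cite[Sections 6--7]{Keller94}, but introduces no essentially new ingredient.
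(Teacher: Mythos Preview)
Your overall architecture matches the paper's: pass to standard lifts $(\mathcal{B},X)$, $(\mathcal{B}',X')$, $(\mathcal{B}'',X'')$, invoke Proposition~\ref{prop:quotient==>recollement} to manufacture dg categories $\mathcal{C},\mathcal{C}',\mathcal{C}''$ with $\per\mathcal{C}\simeq\mathcal{S}/\per\mathcal{A}$ up to summands, produce a recollement at the $\mathcal{C}$-level, and read off the compact-object short exact sequence via Neeman. The difference lies in \emph{where} you apply Lemma~\ref{lem:converse-of-Neeman's-localisation-theorem}, and this is where your argument has a gap.

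You propose to apply Lemma~\ref{lem:converse-of-Neeman's-localisation-theorem} directly at the $\mathcal{C}$-level: build adjoint triples between $\D\mathcal{C}',\D\mathcal{C},\D\mathcal{C}''$ satisfying (R1), then ``check directly, using the hypothesis on $\mathcal{S}'\leftarrow\mathcal{S}\leftarrow\mathcal{S}''$, that the first row restricts on compact objects to a short exact sequence $\per\mathcal{C}'\leftarrow\per\mathcal{C}\leftarrow\per\mathcal{C}''$.'' But under the identifications $\per\mathcal{C}_\bullet\simeq\mathcal{S}_\bullet/\per\mathcal{A}_\bullet$, that compact-object sequence \emph{is} the sequence $\mathcal{S}'/\per\mathcal{A}'\leftarrow\mathcal{S}/\per\mathcal{A}\leftarrow\mathcal{S}''/\per\mathcal{A}''$ whose short-exactness is the theorem's conclusion. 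The hypothesis gives you two nested short exact sequences (the $\mathcal{S}$-row and the $\per\mathcal{A}$-row), but as Remark~\ref{rem:counterexample} illustrates, the induced sequence of quotients need not be short exact in general, so there is no ``direct check'' here. A secondary issue: your appeal to Lemmas~\ref{lem:generating-adjoint-triple} and~\ref{lem:left-adjoint-of-tensor} to get two-sided adjoints for the tensor functors between the $\D\mathcal{C}$'s presupposes perfectness conditions on the relevant bimodules that are not available.

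The paper's fix is to apply Lemma~\ref{lem:converse-of-Neeman's-localisation-theorem} one level up, at the $\mathcal{B}$-level. There the compact-object sequence is $\per\mathcal{B}'\leftarrow\per\mathcal{B}\leftarrow\per\mathcal{B}''$, which under the lift identifications is $\thick(\mathcal{S}')\leftarrow\thick(\mathcal{S})\leftarrow\thick(\mathcal{S}'')$; this \emph{is} short exact up to summands by the hypothesis (via Lemma~\ref{up}). Lemma~\ref{lem:converse-of-Neeman's-localisation-theorem} then yields a genuine recollement of $\D\mathcal{B}$ in terms of $\D\mathcal{B}'$ and $\D\mathcal{B}''$ (Proposition~\ref{prop:recollement-of-kosuzl-dual}). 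From this, the paper checks enough commutativities so that the $\mathcal{B}$-level functors restrict to the kernels $\ke(\mathbf{L}T_X)\simeq\D\mathcal{C}$ etc., giving a left recollement there; the missing right adjoints then come for free from Lemma~\ref{adjoint} since $\mathbf{R}H_{\widetilde{U}}\cong\mathbf{L}T_{\widetilde{U}^T}$ and $\mathbf{R}H_{\widetilde{V}}\cong\mathbf{L}T_{\widetilde{V}^T}$ commute with coproducts (Proposition~\ref{prop4.3}). Only \emph{then} does Neeman's theorem, applied in the forward direction to the $\mathcal{C}$-level recollement, deliver the desired compact-object short exact sequence. Your plan becomes correct once you move the use of Lemma~\ref{lem:converse-of-Neeman's-localisation-theorem} from the $\mathcal{C}$-level to the $\mathcal{B}$-level and descend to $\D\mathcal{C}$ via kernels rather than by composing dg functors.
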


It is clear that Theorem~\ref{thm:s.e.s.-of-quotients-in-the-first-row} implies Theorem~\ref{thm:s.e.s.-of-quotients-in-the-first-row-in-terms-of-derived-categories}. Now we prove that Theorem~\ref{thm:s.e.s.-of-quotients-in-the-first-row-in-terms-of-derived-categories} implies Theorem~\ref{thm:s.e.s.-of-quotients-in-the-first-row}. 
Keep the notation and assumptions in Theorem~\ref{thm:s.e.s.-of-quotients-in-the-first-row}. 

First, by \cite[Theorem 4.3]{Keller94}, there is a dg $k$-category $\A$ such that $\mathcal{T}\simeq \D\A$. Replacing $\A$ with a semi-free resolution of $\A$, we may assume that $\A$ is semi-free and that $\mathcal{T}=\D\A$. 

Secondly, let $\mathcal{W}$ be a set of compact generators of $\mathcal{T}''$.  Consider the standard lift $(\mathcal{A}'',U)$ of $\mathcal{U}=j_!(\mathcal{W})$. Since $\mathcal{U} \subset \per(\A)$, by the property of lift, there is a triangle equivalence $\mathbf{L}T_{U}\colon\D\mathcal{\A''}\rightarrow \Loc(\mathcal{U})$. Since $j_!$ is fully faithful, it induces a triangle equivalence $j_!\colon\mathcal{T}''\rightarrow \Loc(\mathcal{U})$.
Therefore, we can replace $\mathcal{T}''$ by $\D\A''$, and we may assume $j_!=\mathbf{L}T_{U}$. Then by replacing $U$ with a $\K$-projective resolution over $\A''^{\oop}\ten_k \A$, we may assume that $U$ is $\K$-projective over $\A''^{\oop}\ten_k \A$, and hence it is $\K$-projective over $\A''$ and $j_!=T_U$. 

Thirdly, according to \cite[Theorem 4]{NicolasSaorin09},
there is a recollement which is equivalent to the recollement \eqref{recol:abstract-cat}
\xymatrixcolsep{7pc}
\begin{eqnarray*} 
\label{recol:dg-cat}
\xymatrix{\D(\A'_1) \ar[r] &\D(\A) \ar@/_1pc/[l]|-{\mathbf{L}T_{V_1}} \ar@/^1pc/[l] \ar[r]  &\D(\A''). \ar@/^-1pc/[l]|{T_U} \ar@/^1pc/[l]  }
\end{eqnarray*}
\vspace{3pt}

\noindent Here the dg $\A$-$\A'_1$-bimodule $V_1$ is induced by a homological eipmorphism $F\colon\A \rightarrow \A'_1$
which is bijective on objects, namely, $V_1(A,A'_1):=\mathcal{A}'_1(A'_1,FA)$. Further, let $G\colon \A'\to\A'_1$ be a semi-free resolution of $\A'$ and let $V_2$ be the dg $\A'_1$-$\A'$-bimodule defined by  $V_2(A'_1,A')=\A'_1(G(A'),A'_1)$. Then $\mathbf{L}T_{V_2}$ is a triangle equivalence, and by \cite[Lemma 6.3 b)]{Keller94}, there is a dg $\A$-$\A'$-bimodule $V$ such that $\mathbf{L}T_V\cong \mathbf{L}T_{V_2}\circ\mathbf{L}T_{V_1}$. Finally, replacing $V$ with its $\K$-projective resolution over $\A^{\oop}\ten_k \A'$, we have that $V$ is $\K$-projective over $\A$ and $\mathbf{L}T_V\cong T_V$, and thus we obtain a recollement satisfying the conditions in the beginning of Theorem~\ref{thm:s.e.s.-of-quotients-in-the-first-row-in-terms-of-derived-categories}.

\subsection{Proof of Theorem~\ref{thm:s.e.s.-of-quotients-in-the-first-row-in-terms-of-derived-categories}}\

We first remark that it is enough to prove the case when \eqref{ses:algebraic-case-larger} is a short exact sequence up to direct summands.

The following observation will be useful.
\begin{Lem}\label{up}
Assume we have the following commutative diagram of triangulated categories and triangle functors:

\[  \xymatrixcolsep{4pc}\xymatrix{\mathcal{T'} & \mathcal{T} \ar[l]_{F} & \mathcal{T''} \ar[l]_{G}
 \\ \mathcal{P'} \ar[u]^{\beta'} & \mathcal{P} \ar[l]_{H} \ar[u]^{\beta} & \mathcal{P''} \ar[l]_{K} \ar[u]^{\beta''}
}  \]
such that the functors $\beta$, $\beta'$ and $\beta''$ are fully faithful, moreover they are dense up to taking direct summands. Then the upper row is a short exact sequence up to taking direct summands if and only if the bottom row is a short exact sequence up to taking direct summands.
\end{Lem}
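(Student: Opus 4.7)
\medskip

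\noindent\textbf{Proof sketch.} My plan is to reduce to the case of honest short exact sequences by passing to idempotent completions. The first observation is that a triangle functor $\beta\colon\mathcal{P}\to\mathcal{T}$ which is fully faithful and dense up to direct summands induces a triangle equivalence $\beta^{\natural}\colon\mathcal{P}^{\natural}\to\mathcal{T}^{\natural}$ between the idempotent completions: full faithfulness transfers unchanged, and density up to direct summands yields essential surjectivity of $\beta^{\natural}$ upon splitting, in $\mathcal{P}^{\natural}$, the idempotent of $\mathrm{End}_{\mathcal{P}}(P)=\mathrm{End}_{\mathcal{T}}(\beta(P))$ that cuts out any prescribed direct summand of $\beta(P)$. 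Applying this to each of $\beta, \beta', \beta''$, the diagram of the lemma becomes, upon idempotent completion, a commutative diagram with three vertical triangle equivalences.

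The second ingredient is that a sequence $\mathcal{T}'\xleftarrow{F}\mathcal{T}\xleftarrow{G}\mathcal{T}''$ is a short exact sequence up to direct summands if and only if its idempotent completion $\mathcal{T}'^{\natural}\xleftarrow{F^{\natural}}\mathcal{T}^{\natural}\xleftarrow{G^{\natural}}\mathcal{T}''^{\natural}$ is an honest short exact sequence. Full faithfulness of $G$ and the vanishing of $F\circ G$ are plainly invariant under idempotent completion in both directions. For the condition on the induced functor $\bar F\colon\mathcal{T}/\mathcal{T}''\to\mathcal{T}'$, I would invoke the Balmer--Schlichting theorem on the compatibility of Verdier quotients with idempotent completion: it identifies $(\mathcal{T}/\mathcal{T}'')^{\natural}$ with the idempotent completion of $\mathcal{T}^{\natural}/\mathrm{thick}(\mathcal{T}''^{\natural})$, so that $\bar F$ being a triangle equivalence up to direct summands is equivalent to the analogous functor $\bar F^{\natural}$ for the completed sequence being a triangle equivalence.

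Combining these two observations, the upper row is a short exact sequence up to direct summands iff its idempotent completion is a short exact sequence; by the vertical equivalences from the first paragraph, this holds iff the idempotent completion of the lower row is a short exact sequence; which in turn holds iff the lower row is a short exact sequence up to direct summands.

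The main technical obstacle is the second step---the interaction between Verdier quotients and idempotent completion. An alternative route, avoiding any appeal to Balmer--Schlichting, would be to verify the three defining properties of a SES up to direct summands one at a time (full faithfulness of $G$ versus $K$; vanishing of $FG$ versus $HK$; and $\bar F$ versus $\bar H$ being an equivalence up to direct summands). The easy parts use only the density up to direct summands and faithfulness of the $\beta$'s; the delicate step is to check that the triangle functor $\bar\beta\colon\mathcal{P}/K(\mathcal{P}'')\to\mathcal{T}/G(\mathcal{T}'')$ induced by $\beta$ is again fully faithful and dense up to direct summands, after which a 2-out-of-3 style argument closes the case.
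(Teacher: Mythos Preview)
Your proposal is correct, and both routes you sketch are viable. The paper, however, takes your second ``alternative route'' rather than the idempotent-completion argument, and does so quite tersely.

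The paper's proof works directly with the Verdier quotients. For the forward implication, the key point (which you correctly flag as the delicate step) is handled by observing that density-up-to-summands of $\beta''$ forces $\thick_{\mathcal T}(G\beta''(\mathcal P''))=\thick_{\mathcal T}(G(\mathcal T''))$, so the functor $\mathcal T/\beta K(\mathcal P'')\to\mathcal T/G(\mathcal T'')$ is an equivalence on the nose. Combined with the full faithfulness of $\overline F$ and the properties of $\beta,\beta'$, this gives that $\overline H$ is fully faithful and dense up to summands. The backward implication is argued symmetrically. The paper does not spell out why the induced functor $\overline\beta\colon\mathcal P/K(\mathcal P'')\to\mathcal T/G(\mathcal T'')$ is again fully faithful and dense up to summands, but this follows from the calculus of fractions once one knows the thick subcategories match up as above.

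Your idempotent-completion approach is more conceptual and makes the two directions formally symmetric, at the cost of invoking the compatibility of Verdier quotients with idempotent completion. One caution: your claim that the completed sequence is an \emph{honest} short exact sequence is slightly too strong, since $\mathcal T^\natural/\thick(\mathcal T''^\natural)$ need not be idempotent complete; but you only need that the completed sequence is a short exact sequence \emph{up to direct summands}, and this weaker statement is what your argument actually uses and what Balmer--Schlichting delivers. The paper's direct route avoids this subtlety entirely.
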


\begin{proof}
Assume the upper row is a short exact sequence up to taking direct summands. It is obvious that $K$ is fully faithful and $H\circ K=0$. By our assumption on $\beta''$, the natural functor $\mathcal{T}/\mathcal{P''}\ra\mathcal{T}/\mathcal{T''}$ is an equivalence. Since $\overline{F}: \mathcal{T}/\mathcal{T''}\ra\mathcal{T'}$ is fully faithful, we have $\overline{H}: \mathcal{P}/\mathcal{P''}\ra\mathcal{P'}$ is also fully faithful and is dense up to taking direct summands.

On the other hand, assume the bottom row is a short exact sequence up to taking direct summands. It is easy to show $G$ is fully faithful and $F\circ G=0$. The equivalence $\mathcal{T}/\mathcal{P''}\ra\mathcal{T}/\mathcal{T''}$ and the fact that $\beta$ is an equivalence up to taking direct summands imply $\overline{\beta}: \cal{P}/\cal{P''}\ra\cal{T}/\cal{T''}$ is an equivalence up to taking direct summands. Since $\beta'$ and $\overline{F}: \cal{T}/\cal{T''}\ra\cal{T}$ are both equivalences up to taking direct summands, we are done.
\end{proof}

To prove Theorem~\ref{thm:s.e.s.-of-quotients-in-the-first-row-in-terms-of-derived-categories}, we will identify $\mathcal{S'}/\per \mathcal{A'}$, $~\mathcal{S}/\per \mathcal{A}$~and~$\mathcal{S''}/\per\mathcal{A''}$ with the subcategories of compact objects of the derived categories of certain dg categories. Then we will construct a recollement of these derived categories. First, we can choose nice lifts of $\mathcal{S'}$, $\mathcal{S}$ and $\mathcal{S''}$.

\begin{Lem}\label{lemma4.2}
There are standard lifts of $\mathcal{S'}$,~$\mathcal{S}$ and $\mathcal{S''}$, denoted by $(\mathcal{B'},X')$, $(\mathcal{B},X)$ and $(\mathcal{B''},X'')$, respectively, such that    $T_U(\mathcal{B''})\subset \mathcal{B}$ and $T_V(\mathcal{B})\subset \mathcal{B'}$~.
\end{Lem}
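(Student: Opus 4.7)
I plan to construct the three lifts iteratively: first $\B''$, then $\B$ containing $T_U(\B'')$, then $\B'$ containing $T_V(\B)$. The essential technical ingredient is that, thanks to the reductions carried out in the paragraph preceding Theorem~\ref{thm4.1}, one may assume $U$ is $\K$-projective over $\A''^{\oop}\ot_k\A$ and $V$ is $\K$-projective over $\A^{\oop}\ot_k\A'$. By \cite[Lemma~6.2]{Keller94} this bimodule $\K$-projectivity ensures that the dg functors $T_U$ and $T_V$ send $\K$-projective dg modules to $\K$-projective dg modules, not merely to dg modules quasi-isomorphic to $\K$-projectives.

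First I would choose $\B''$ to be any standard lift of $\cal S''$. For each $pM''\in\B''$ the dg $\A$-module $T_U(pM'')=pM''\ot_{\A''}U$ is then $\K$-projective and represents $j_!(M'')\in\cal S$. Since $j_!$ is fully faithful, $T_U(\B'')$ contains precisely one $\K$-projective resolution per isomorphism class in $j_!(\cal S'')\subseteq\cal S$. I would then enlarge $T_U(\B'')$ to a standard lift $\B$ of $\cal S$ by adjoining, for each remaining isomorphism class of $\cal S$, an arbitrary $\K$-projective resolution; by construction $T_U(\B'')\subset\B$. The lift $\B'$ is built in the same spirit: include $T_V(M)$ for every $M\in\B$ (each of these being $\K$-projective over $\A'$ and representing $i^*(M)\in\cal S'$), then adjoin $\K$-projective resolutions for the isomorphism classes of $\cal S'$ not yet covered.

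The main obstacle, and the only place where the two steps really differ, is that $i^*=T_V$ is not injective on isomorphism classes: for instance $i^*\circ j_!=0$, so the distinct acyclic dg $\A'$-modules $T_V(T_U(pM''))$ (for varying $pM''\in\B''$) all represent the zero object of $\cal S'$. A strict reading of ``standard lift''---exactly one resolution per isomorphism class---would forbid keeping all of them in $\B'$. I plan to handle this by adopting the mildly more flexible notion of standard lift in the spirit of \cite[Section~7]{Keller94}, in which $\B'$ may carry several distinct $\K$-projective resolutions of the same object of $\cal S'$, provided every isomorphism class is represented at least once. This flexibility leaves intact the essential property that $T_{X'}\colon\D\B'\to\D\A'$ restricts to an equivalence $\per\B'\simeq\thick(\cal S')$, which is what the subsequent construction of the recollement of $\D\B'$, $\D\B$, $\D\B''$ will require.
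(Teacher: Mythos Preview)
Your proposal is correct and takes the same iterative approach as the paper: choose $\B''$, extend $T_U(\B'')$ to $\B$, then $T_V(\B)$ to $\B'$, using that $T_U$ and $T_V$ preserve $\K$-projectivity (the paper cites \cite[Lemma~6.1]{Keller94} here rather than Lemma~6.2). Your worry about $i^*$ collapsing isomorphism classes is over-cautious under the paper's definition, which asks for one resolution per \emph{object} of $\cal S'$ (and objects of $\D\A'$ are literally dg $\A'$-modules, so distinct acyclic modules $T_V(T_U(pM''))$ are already distinct objects of $\cal S'$ and may each be assigned to themselves); the paper accordingly just writes ``similarly'' for the second step without further comment.
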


\begin{proof}
Let $(\mathcal{B''},X'')$ be a standard lift of $\mathcal{S''}$.
For $S''\in \mathcal{S''}$, let $\mathbf{p}S''\rightarrow S''$ be a $\mathcal{K}$-projective resolution. Since $T_U$ preserves acyclicity and $\K$-projectivity (see \cite[Lemma 6.1]{Keller94}), it follows that $T_U(\mathbf{p}S'')\rightarrow T_U(S'')$ is a $\K$-projective resolution of $T_U(S'')\in \mathcal{S}$. We can extend $\{T_U(\mathbf{p}S'')\mid S''\in\mathcal{S}''$ to a standard lift of $\mathcal{B}$. Then clearly $T_U(\mathcal{B''})\subset \mathcal{B}$. Similarly, one can find a standard lift $(\mathcal{B}',X')$ of $\mathcal{S}'$ such that $T_V(\mathcal{B})\subset \mathcal{B'}$.
\end{proof}

Now we have two dg functors: $T_U:\mathcal{B''}\rightarrow\mathcal{B}$ and $T_V:\mathcal{B}\rightarrow\mathcal{B'}$. They induce a $\mathcal{B''}^{op}\otimes \mathcal{B}$-module $\widetilde{\mathcal{U}}$ and a $\mathcal{B}^{op}\otimes \mathcal{B'}$-module $\widetilde{V}$
which are defined by $\widetilde{U}(B'', B)=\mathcal{B}(B, T_U(B''))$ and $\widetilde{V}(B,~B')=\mathcal{B'}(B',~T_V(B))$ respectively.

For any $\mathbf{p}S''\in\mathcal{B''} $, we have $T_U\circ \mathbf{L}T_{X''}((\mathbf{p}S'')^{\wedge})=T_U(\mathbf{p}S'') $ and $\mathbf{L}T_X\circ \mathbf{L}T_{\widetilde{U}}((\mathbf{p}S'')^{\wedge})=\mathbf{L}T_X((T_U(\mathbf{p}S''))^{\wedge})=T_U(\mathbf{p}S'')$. Thus $T_U\circ \mathbf{L}T_{X''}=\mathbf{L}T_X\circ \mathbf{L}T_{\widetilde{U}} $. Similarly, $T_V\circ \mathbf{L}T_X=\mathbf{L}T_{X'}\circ \mathbf{L}T_{\widetilde{V}}$. Then we have the following commutative diagram:

\begin{equation}
\label{equ:commutative_diagram_lift}
\xymatrixcolsep{4pc}\xymatrix{\D\mathcal{A'} & \D\mathcal{A} \ar[l]_{T_V} & \D\mathcal{A''} \ar[l]_{T_U}
 \\ \D\mathcal{B'} \ar[u]^{\mathbf{L}T_{X'}} & \D\mathcal{B} \ar[l]_{\mathbf{L}T_{\widetilde{V}}} \ar[u]^{\mathbf{L}T_X} & \D\mathcal{B''}. \ar[l]_{\mathbf{L}T_{\widetilde{U}}} \ar[u]^{\mathbf{L}T_{X''}}
}  \end{equation}

\

We will show that the bottom row of the diagram above can be extended to a recollement. Notice that $\mathbf{L}T_{\widetilde{U}}(\per \ \mathcal{B}'')\subset \per \ \mathcal{B}$ and $\mathbf{L}T_{\widetilde{V}}(\per \ \mathcal{B}^{})\subset \per \ \mathcal{B}'$, then by \cite[Lemma 6.4]{Keller94}, $\mathbf{R}H_{\widetilde{U}}=\mathbf{L}T_{\widetilde{U}^T}$,~$\mathbf{R}H_{\widetilde{V}}=\mathbf{L}T_{\widetilde{V}^T}$, thus there is a diagram of triangle functors:

\begin{align}
\label{recol:Kosuzl-dual}
\xymatrixcolsep{6pc}\xymatrix{\D\mathcal{B'} \ar[r]^{\mathbf{R}H_{\widetilde{V}}=\mathbf{L}T_{\widetilde{V}^T}} &\D\mathcal{B} \ar@/_2pc/[l]_{\mathbf{L}T_{\widetilde{V}}} \ar@/^2pc/[l]^{\mathbf{R}H_{\widetilde{V}^T}}
\ar[r]^{\mathbf{R}H_{\widetilde{U}}=\mathbf{L}T_{\widetilde{U}^T}}  &\D\mathcal{B''} \ar@/^-2pc/[l]_{\mathbf{L}T_{\widetilde{U}}}
\ar@/^2pc/[l]^{\mathbf{R}H_{\widetilde{U}^T}}
  }
\end{align}

\begin{Prop}
\label{prop:recollement-of-kosuzl-dual}
The diagram \eqref{recol:Kosuzl-dual} is a recollement.
\end{Prop}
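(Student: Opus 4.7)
My plan is to apply the converse of Neeman's localisation theorem, namely Lemma~\ref{lem:converse-of-Neeman's-localisation-theorem}, to the diagram \eqref{recol:Kosuzl-dual}. For this I must verify two things: (i) the diagram satisfies condition (R1) of Definition~\ref{defn:recollement}, and (ii) its first row restricts to a short exact sequence up to direct summands on the subcategories of compact objects $\per\B' \leftarrow \per\B \leftarrow \per\B''$.

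Part (i) is immediate from the construction. The tensor--Hom adjunctions provide adjoint pairs $(\mathbf{L}T_{\widetilde{V}},\mathbf{R}H_{\widetilde{V}})$ and $(\mathbf{L}T_{\widetilde{U}},\mathbf{R}H_{\widetilde{U}})$; combined with the identifications $\mathbf{R}H_{\widetilde{V}}\cong\mathbf{L}T_{\widetilde{V}^T}$ and $\mathbf{R}H_{\widetilde{U}}\cong\mathbf{L}T_{\widetilde{U}^T}$ already noted (via \cite[Lemma~6.4]{Keller94} once one observes that $\mathbf{L}T_{\widetilde{V}}$ and $\mathbf{L}T_{\widetilde{U}}$ preserve perfectness), one obtains further right adjoints $\mathbf{R}H_{\widetilde{V}^T}$ and $\mathbf{R}H_{\widetilde{U}^T}$. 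Thus the two adjoint triples demanded in (R1) are in place, with the coincidence $i_*=i_!$, $j^*=j^!$ required for the hypothesis of Lemma~\ref{lem:converse-of-Neeman's-localisation-theorem}.

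For Part (ii) I would exploit the commutative diagram \eqref{equ:commutative_diagram_lift}. By the basic property of standard lifts recalled above Proposition~\ref{prop:quotient==>recollement}, each of $\mathbf{L}T_{X'}$, $\mathbf{L}T_X$, $\mathbf{L}T_{X''}$ restricts to a fully faithful triangle functor from $\per\B'$, $\per\B$, $\per\B''$ onto $\thick_{\D\A'}(\mathcal{S}')$, $\thick_{\D\A}(\mathcal{S})$, $\thick_{\D\A''}(\mathcal{S}'')$, that is, onto $\mathcal{S}'$, $\mathcal{S}$, $\mathcal{S}''$ up to direct summands. The assumed short exact sequence up to direct summands
\[
\xymatrix{\mathcal{S}' & \mathcal{S}\ar[l]_{i^*} & \mathcal{S}''\ar[l]_{j_!}}
\]
is precisely the restriction of the top row of \eqref{equ:commutative_diagram_lift} to these subcategories. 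Applying Lemma~\ref{up} to the commutative square obtained by restricting \eqref{equ:commutative_diagram_lift} to the compact subcategories (with vertical arrows the standard-lift equivalences up to direct summands), I conclude that the first row of \eqref{recol:Kosuzl-dual} also restricts to a short exact sequence up to direct summands on compacts.

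Combining (i) and (ii), Lemma~\ref{lem:converse-of-Neeman's-localisation-theorem} upgrades \eqref{recol:Kosuzl-dual} to a recollement. The only subtle point is the passage across the vertical arrows of \eqref{equ:commutative_diagram_lift}, where the possible failure of $\mathcal{S}, \mathcal{S}', \mathcal{S}''$ to be closed under direct summands forces the ``up to direct summands'' qualifier throughout; this is exactly what Lemma~\ref{up} is tailored to handle, so the argument goes through cleanly.
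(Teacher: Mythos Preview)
Your proof is correct and follows essentially the same route as the paper: verify (R1) from the adjunctions already recorded, then use Lemma~\ref{up} together with the standard-lift equivalences $\per\B\simeq\thick(\mathcal{S})$ (and similarly for $\B',\B''$) to transport the assumed short exact sequence up to direct summands on $\mathcal{S}',\mathcal{S},\mathcal{S}''$ to one on $\per\B',\per\B,\per\B''$, and conclude via Lemma~\ref{lem:converse-of-Neeman's-localisation-theorem}. The paper makes this transport explicit by writing out the three-row diagram~\eqref{diag:perfectisation-of-S} (with the intermediate row $\thick(\mathcal{S}')\leftarrow\thick(\mathcal{S})\leftarrow\thick(\mathcal{S}'')$), whereas you describe it more verbally; the content is the same.
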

\begin{proof}
By the construction of $\mathcal{B}',\mathcal{B},\mathcal{B}''$, we have the following commutative diagram
\begin{align}
\label{diag:perfectisation-of-S}
\xymatrixcolsep{4pc}\xymatrix{
\mathcal{S}' \ar@{^{(}->}[d]& \mathcal{S}\ar[l]_{T_V}\ar@{^{(}->}[d] & \mathcal{S}''\ar[l]_{T_U}\ar@{^{(}->}[d]\\
\thick(\mathcal{S'}) & \thick(\mathcal{S}) \ar[l]_{T_V} & \thick(\mathcal{S''}) \ar[l]_{T_U}
 \\ \per\mathcal{B'} \ar[u]^{\mathbf{L}T_{X'}}_{\simeq} & \per\mathcal{B} \ar[l]_{\mathbf{L}T_{\widetilde{V}}} \ar[u]^{\mathbf{L}T_X}_{\simeq} & \per\mathcal{B''} \ar[l]_{\mathbf{L}T_{\widetilde{U}}} \ar[u]^{\mathbf{L}T_{X''}}_{\simeq}
}
\end{align}
Since the first row is a short exact sequence up to direct summand, so is the third row by Lemma \ref{up}. Then the desired result follows immediately from Lemma~\ref{lem:converse-of-Neeman's-localisation-theorem}.
\end{proof}

By Propositions~\ref{prop:quotient==>recollement}~and~\ref{prop:recollement-of-kosuzl-dual}, we obtain a diagram whose rows and columns are recollemnets:
\begin{align}
\label{diag:big-diagram}
\xymatrixrowsep{5pc} \xymatrixcolsep{8pc}\xymatrix{
\D\mathcal{A'} \ar[r]^{\mathbf{R}H_{V} \simeq \mathbf{L}T_{V^T}}  \ar@/^-1pc/[d]_{\mathbf{L}T_{X'^{T}}} \ar@/^1pc/[d]^{\mathbf{R}H_{X'}}
&\D\mathcal{A} \ar@/^-2pc/[l]_{\mathbf{L}T_{V}} \ar@/^2pc/[l]_{\mathbf{R}H_{V^T}} \ar[r]^{\mathbf{R}H_{U} \simeq \mathbf{L}T_{U^T}} \ar@/^-1pc/[d]_{\mathbf{L}T_{X^{T}}} \ar@/^1pc/[d]^{\mathbf{R}H_{X}}
&\D\mathcal{A''} \ar@/^-2pc/[l]_{\mathbf{L}T_{U}} \ar@/^2pc/[l]_{\mathbf{R}H_{U^T}} \ar@/^-1pc/[d]_{\mathbf{L}T_{X''^{T}}} \ar@/^1pc/[d]^{\mathbf{R}H_{X''}}  \\
\D\mathcal{B'} \ar[u] \ar[r]^{\mathbf{R}H_{\widetilde{V}} \simeq \mathbf{L}T_{\widetilde{V}^T}} \ar@/^-1pc/[d]_{\LT_{Y'}} \ar@/^1pc/[d]
&\D\mathcal{B} \ar@/^-2pc/[l]_{\mathbf{L}T_{\widetilde{V}}} \ar@/^2pc/[l]_{\mathbf{R}H_{\widetilde{V}^T}} \ar[u] \ar[r]^{\mathbf{R}H_{\widetilde{U}} \simeq \mathbf{L}T_{\widetilde{U}^T}} \ar@/^-1pc/[d]_{\LT_{Y}} \ar@/^1pc/[d]
&\D\mathcal{B''} \ar[u] \ar@/^-2pc/[l]_{\mathbf{L}T_{\widetilde{U}}} \ar@/^2pc/[l]_{\mathbf{R}H_{\widetilde{U}^T}} \ar@/^-1pc/[d]_{\LT_{Y''}} \ar@/^1pc/[d] \\
\D\mathcal{C'} \ar[u]
&\D\mathcal{C}  \ar[u]
&\D\mathcal{C''} \ar[u]
}
\end{align}
Here $\LT_{Y}$ is induced by the dg functor $F\colon\mathcal{B}\rightarrow\mathcal{C}$. $\LT_{Y'}$ and $\LT_{Y''}$ are induced in similar ways.
The following proposition implies our main theorem.

\begin{Prop}\label{prop4.3}
There is a recollement
\[
 \xymatrixrowsep{4pc} \xymatrixcolsep{5pc}\xymatrix{
\D\mathcal{C'}  \ar[r]^{\sigma'}
&\D\mathcal{C} \ar@/^-1pc/[l]_{\sigma} \ar@/^1pc/[l] \ar[r]^{\tau'}
&\D\mathcal{C''} \ar@/^-1pc/[l]_{\tau} \ar@/^1pc/[l]
}  \]
such that the following diagram
\[
\xymatrixcolsep{4pc}\xymatrix{
  \D\mathcal{B'}  \ar@/^-1pc/[d]_<<<{\mathbf{L}T_{Y'}}
    &\D\mathcal{B} \ar@/^-1pc/[l]_{\mathbf{L}T_{\w{V}}}  \ar@/^-1pc/[d]_<<<{\mathbf{L}T_{Y}}
    &\D\mathcal{B''} \ar@/^-1pc/[l]_{\mathbf{L}T_{\w{U}}}  \ar@/^-1pc/[d]_<<<{\mathbf{L}T_{Y''}}   \\
 \D\mathcal{C'}
   &\D\mathcal{C} \ar@/^-1pc/[l]_{\sigma}
   &\D\mathcal{C''} \ar@/^-1pc/[l]_{\tau}
}
\]
commutates.
\end{Prop}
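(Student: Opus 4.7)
The strategy is to identify each of $\D\mathcal{C},\D\mathcal{C}',\D\mathcal{C}''$ with the kernel $\mathcal{K}:=\ke(\LT_X)\subset\D\mathcal{B}$, $\mathcal{K}':=\ke(\LT_{X'})\subset\D\mathcal{B}'$, $\mathcal{K}'':=\ke(\LT_{X''})\subset\D\mathcal{B}''$ respectively, via the fully faithful functor $i_*$ of the corresponding column recollement from Proposition~\ref{prop:quotient==>recollement}. The bottom-row recollement is then obtained by restricting the middle-row recollement~\eqref{recol:Kosuzl-dual} to these kernels.

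The first step is to check that the middle-row recollement restricts to a recollement of $\mathcal{K},\mathcal{K}',\mathcal{K}''$. By Lemma~\ref{lem:restricting-half-recollement} and its right-recollement counterpart, it suffices to verify that each of the six middle-row functors maps the appropriate kernel into the target kernel. The commutativities $\LT_{X'}\circ\LT_{\w V}\cong T_V\circ\LT_X$ and $\LT_X\circ\LT_{\w U}\cong T_U\circ\LT_{X''}$ from~\eqref{equ:commutative_diagram_lift} immediately yield invariance under the left-adjoint-side functors $\LT_{\w V}$ and $\LT_{\w U}$. Taking right adjoints of these identities produces Beck--Chevalley isomorphisms
\[
\mathbf{R}H_X\circ\LT_{V^T}\cong\LT_{\w V^T}\circ\mathbf{R}H_{X'},\qquad \mathbf{R}H_{X''}\circ\LT_{U^T}\cong\LT_{\w U^T}\circ\mathbf{R}H_X;
\]
combined with the recollement triangles of the middle row and the semi-orthogonal decompositions coming from the column recollements, these deliver the remaining four invariances.

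The second step is to transport the restricted recollement back to $\D\mathcal{C}',\D\mathcal{C},\D\mathcal{C}''$ via the equivalences $i_*\colon\D\mathcal{C}\xrightarrow{\sim}\mathcal{K}$ (and likewise for $\mathcal{C}'$ and $\mathcal{C}''$), with quasi-inverses given by $\LT_Y|_{\mathcal{K}}$, $\LT_{Y'}|_{\mathcal{K}'}$, $\LT_{Y''}|_{\mathcal{K}''}$. Explicitly, define $\sigma:=\LT_{Y'}\circ\LT_{\w V}\circ i_*$ and $\tau:=\LT_{Y}\circ\LT_{\w U}\circ i_*$, and analogously $\sigma':=\LT_Y\circ\LT_{\w V^T}\circ i_*$ and $\tau':=\LT_{Y''}\circ\LT_{\w U^T}\circ i_*$, together with the right adjoints obtained by stacking the three layers of adjunctions (column, middle row, column). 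The required commutativity $\sigma\circ\LT_Y\cong\LT_{Y'}\circ\LT_{\w V}$ then follows from the column triangle $\LT_{X^T}\LT_X(M)\to M\to i_*\LT_Y(M)\to$ applied to $M\in\D\mathcal{B}$, together with the vanishing of $\LT_{Y'}\circ\LT_{\w V}$ on $\im(\LT_{X^T})$: this vanishing is a consequence of the Beck--Chevalley relations above, since they force $\LT_{\w V}(\LT_{X^T}(\D\mathcal{A}))\subset\im(\LT_{X'^T})=\ke(\LT_{Y'})$. The verification of $\tau\circ\LT_{Y''}\cong\LT_Y\circ\LT_{\w U}$ is symmetric.

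The principal obstacle is establishing the kernel invariance in the first step for the right-adjoint-side functors $\LT_{\w V^T},\LT_{\w U^T},\mathbf{R}H_{\w V^T},\mathbf{R}H_{\w U^T}$; these do not follow directly from~\eqref{equ:commutative_diagram_lift}, and require a careful use of the Beck--Chevalley isomorphisms together with the semi-orthogonal decompositions of $\D\mathcal{B},\D\mathcal{B}',\D\mathcal{B}''$ induced by the column recollements.
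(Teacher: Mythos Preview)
Your overall strategy matches the paper's: identify $\D\mathcal{C},\D\mathcal{C}',\D\mathcal{C}''$ with the kernels $\mathcal{K},\mathcal{K}',\mathcal{K}''$ and obtain the bottom recollement by restricting the middle row. But there is a gap in the first step. The Beck--Chevalley isomorphisms you obtain by taking right adjoints of the squares in~\eqref{equ:commutative_diagram_lift} are
\[
\LT_{\w V^T}\circ\mathbf{R}H_{X'}\cong\mathbf{R}H_X\circ\LT_{V^T},\qquad\LT_{\w U^T}\circ\mathbf{R}H_X\cong\mathbf{R}H_{X''}\circ\LT_{U^T}.
\]
These control $\LT_{\w V^T}$ and $\LT_{\w U^T}$ on the images of $\mathbf{R}H_{X'}$ and $\mathbf{R}H_X$ (that is, on $\im j_*$ of the column recollements), not on the kernels $\mathcal{K}'=\ke\LT_{X'}$ and $\mathcal{K}=\ke\LT_X$ (which are $\im i_*$). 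These are different subcategories, so the relations do not give $\LT_{\w V^T}(\mathcal{K}')\subset\mathcal{K}$ or $\LT_{\w U^T}(\mathcal{K})\subset\mathcal{K}''$. Your appeal to ``recollement triangles and semi-orthogonal decompositions'' does not close the gap: using the middle-row triangle $\LT_{\w U}\LT_{\w U^T}(N)\to N\to\LT_{\w V^T}\LT_{\w V}(N)$ for $N\in\mathcal{K}$ makes the two invariances mutually dependent.

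What the paper does, and what you are missing, is to establish a \emph{second} pair of commutative squares
\[
\LT_{X^T}\circ T_U\cong\LT_{\w U}\circ\LT_{X''^T},\qquad\LT_{X'^T}\circ T_V\cong\LT_{\w V}\circ\LT_{X^T},
\]
which are \emph{not} formal mates of~\eqref{equ:commutative_diagram_lift}; they are verified directly on free modules using that $\LT_{X^T}|_{\per\A}=\mathbf{R}H_X|_{\per\A}$ for a standard lift. Taking right adjoints of \emph{these} yields $\LT_X\circ\LT_{\w V^T}\cong\LT_{V^T}\circ\LT_{X'}$ and $\LT_{X''}\circ\LT_{\w U^T}\cong\LT_{U^T}\circ\LT_X$, from which the kernel invariances are immediate. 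These second squares are also exactly what you need in your second step for $\LT_{\w V}(\im\LT_{X^T})\subset\im\LT_{X'^T}$; this does not follow from your stated Beck--Chevalley relations. Finally, note that the paper never shows that $\mathbf{R}H_{\w V^T}$ and $\mathbf{R}H_{\w U^T}$ restrict to the kernels: it only obtains a \emph{left} recollement on $\mathcal{K}',\mathcal{K},\mathcal{K}''$ and then uses that $\LT_{\w V^T},\LT_{\w U^T}$ preserve coproducts together with Lemma~\ref{adjoint} to produce the missing right adjoints abstractly. Trying to restrict all six functors directly, as you propose, is more than is needed and likely more than is true.
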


\

\begin{proof}

We first show that the following diagram is commutative:
\[
\xymatrixcolsep{4pc}\xymatrix{
  \D\mathcal{A'}  \ar@/^-1pc/[d]_<<<{\mathbf{L}T_{X'^{T}}}
    &\D\mathcal{A} \ar@/^-1pc/[l]_{\mathbf{L}T_{V}}  \ar@/^-1pc/[d]_<<<{\mathbf{L}T_{X^{T}}}
    &\D\mathcal{A''} \ar@/^-1pc/[l]_{\mathbf{L}T_{U}}  \ar@/^-1pc/[d]_<<<{\mathbf{L}T_{X''^{T}}}   \\
 \D\mathcal{B'}
   &\D\mathcal{B} \ar@/^-1pc/[l]^{\mathbf{L}T_{\widetilde{V}}}
   &\D\mathcal{B''}. \ar@/^-1pc/[l]^{\mathbf{L}T_{\widetilde{U}}}
}
\]
By the definition of lift, $\mathbf{L}T_X$ induces an equivalence between $\per\mathcal{B}$ and $\thick(\mathcal{S})$ and the quasi-inverse of the equivalence is given by $\mathbf{R}H_X$ (see  \cite[Section 7.3]{Keller94}).
For any $A''\in \A''$, we have $\L T_{X^{T}}\circ \L T_{U}(A''^{\wedge})=\bf{R} H_{X}(T_U(A''^{\wedge}))=(T_U(A''^{\wedge}))^{\wedge}$, because $\L T_{X^{T}}|_{\per \A''}=\bf{R}H_{X}|_{\per\A''}$ by \cite[Lemma 6.4]{Keller94}. Similarly, $\L T_{\w{U}} \circ \L T_{X''^{T}}(A''^{\wedge})=\L T_{\w{U}} \circ \bf{R} H_{X''}(A''^{\wedge})=\mathbf{L}T_{\widetilde{U}}((A''^{\wedge})^{\wedge}) =(T_U(A''^{\wedge}))^{\wedge}$. Then $\mathbf{L}T_{X^T}\circ \mathbf{L}T_{U}=\mathbf{L}T_{\widetilde{U}} \circ \mathbf{L}T_{X''^{T}}$. So the right part of the diagram above is commutative, and the commutativity of left part can be shown similarly.
By adjunction, we have a new commutative diagram:

\[
\xymatrixcolsep{4pc}\xymatrix{
\D\mathcal{A'} \ar[r]^{\mathbf{R}H_{V} }
&\D\mathcal{A}  \ar[r]^{\mathbf{R}H_{U} }
&\D\mathcal{A''}   \\
\D\mathcal{B'} \ar[u]^{\mathbf{L}T_{X'}} \ar[r]^{\mathbf{R}H_{\widetilde{V}} }
&\D\mathcal{B}  \ar[u]^{\mathbf{L}T_{X}} \ar[r]^{ \mathbf{R}H_{\widetilde{U}} }
&\D\mathcal{B''}. \ar[u]^{\mathbf{L}T_{X''}}
}
\]
Combining this diagram and \eqref{equ:commutative_diagram_lift}, we obtain the following left recollement
\begin{align}
\label{leftrecollementker}
\xymatrixcolsep{4pc}\xymatrix{\ke(\L T_{X'}) \ar[r]|{\bf{R} H_{\w{V}}} &\ke(\L T_{X}) \ar@/_1.5pc/[l]_{\L T_{\w{V}}}  \ar[r]|{\bf{R} H_{\w{U}}}  &\ke(\L T_{X''}). \ar@/^-1.5pc/[l]_{\L T_{\w{U}}}
  }
\end{align}
Note that $\ke(\LT_{X})$, $\ke(\LT_{X'})$ and $\ke(\LT_{X''})$ are triangle equivalent to $\D\cal{C}$,   $\D\cal{C'}$ and $\D\cal{C''}$, respectively. Since $\bf{R} H_{\w{V}}\cong \L T_{\w{V}^{T}} $ and  $\bf{R} H_{\w{U}}\cong \L T_{\w{U}^{T}}$ commutate with infinite direct sums, $\bf{R} H_{\w{V}}|_{\ke(\L T_{X'})}$ and $\bf{R} H_{\w{U}}|_{\ke(\L T_{X})}$
admit right adjoint by Lemma \ref{adjoint}.
Thus \eqref{leftrecollementker} extends to a recollement by Proposition \ref{prop:recollement-vs-ses}.
It is equivalent to a recollement of $\D\cal{C}$ in terms of $\D\cal{C'} $ and $\D\cal C''$, as deserved.
The second assertion obtained by the following commutative diagram
\[
\xymatrixcolsep{4pc}\xymatrix{
\D\mathcal{B'} \ar[r]^{\mathbf{R}H_{\w{V}} }
&\D\mathcal{B}  \ar[r]^{\mathbf{R}H_{\w{U}} }
&\D\mathcal{B''}   \\
\D\mathcal{C'} \ar[u]^{\mathbf{L}T_{Y'^{T}}} \ar[r]^{\sigma'}
&\D\mathcal{C}  \ar[u]^{\mathbf{L}T_{Y^{T}}} \ar[r]^{ \tau'}
&\D\mathcal{C''}. \ar[u]^{\mathbf{L}T_{Y''^{T}}}
}
\]
\end{proof}

\

Now we are ready to prove Theorem~\ref{thm:s.e.s.-of-quotients-in-the-first-row-in-terms-of-derived-categories}.
\begin{proof}[Proof of Theorem~\ref{thm:s.e.s.-of-quotients-in-the-first-row-in-terms-of-derived-categories}]

By Proposition \ref{prop4.3}, we have the following commutative diagram
\[\xymatrixrowsep{3pc} \xymatrixcolsep{5pc}\xymatrix{
 \per \mathcal{A'}  \ar@/^-1pc/[d]_<<<<<{\mathbf{L}T_{X^{'T}}}
    &\per\mathcal{A} \ar@/^-1pc/[l]_{\mathbf{L}T_{V}}  \ar@/^-1pc/[d]_<<<<<{\mathbf{L}T_{X^{T}}}
    &\per\mathcal{A''} \ar@/^-1pc/[l]_{\mathbf{L}T_{U}}  \ar@/^-1pc/[d]_<<<<<{\mathbf{L}T_{X^{''T}}}   \\
 \per\mathcal{B'}   \ar@/^-1pc/[d]_<<<<<{\mathbf{L}T_{Y'}}
   &\per\mathcal{B} \ar@/^-1pc/[l]_{\mathbf{L}T_{\widetilde{V}}}  \ar@/^-1pc/[d]_<<<<<{\mathbf{L}T_{Y}}
   &\per\mathcal{B''} \ar@/^-1pc/[l]_{\mathbf{L}T_{\widetilde{U}}} \ar@/^-1pc/[d]_<<<<<{\mathbf{L}T_{Y''}}
  \\
 \per\mathcal{C'}
 & \per\mathcal{C} \ar@/^-1pc/[l]_{\sigma}
 & \per\mathcal{C''} \ar@/^-1pc/[l]_{\tau}
}  \]
This diagram induces a commutative diagram

\[\xymatrixrowsep{3pc} \xymatrixcolsep{4pc}\xymatrix{
 \per\mathcal{B'}/\mathbf{L}T_{X^{'T}}(\per\mathcal{A'})   \ar@/^-1pc/[d]_<<<<<{\overline{\mathbf{L}T_{Y'}}}
   &\per\mathcal{B}/\mathbf{L}T_{X^{T}}(\per\mathcal{A}) \ar@/^-1pc/[l]_{\overline{\mathbf{L}T_{\widetilde{V}}}}  \ar@/^-1pc/[d]_<<<<<{\overline{\mathbf{L}T_{Y}}}
   &\per\mathcal{B''}/\mathbf{L}T_{X^{''T}}(\per\mathcal{A''}) \ar@/^-1pc/[l]_{\overline{\mathbf{L}T_{\widetilde{U}}}} \ar@/^-1pc/[d]_<<<<<{\overline{\mathbf{L}T_{Y''}}}
  \\
 \per\mathcal{C'}
 & \per\mathcal{C} \ar@/^-1pc/[l]_{\sigma}
 & \per\mathcal{C''}, \ar@/^-1pc/[l]_{\tau}
}  \]
where the induced functor $\overline{\LT_{Y}}$, $\overline{\LT_{Y'}}$ and $\overline{\LT_{Y''}}$ are fully faithful and are dense up to direct summands. The bottom row is an exact sequence up to direct summands, so is the upper row by Lemma \ref{up}.
The commutative diagram~\eqref{diag:perfectisation-of-S} induces a commutative diagram
\[
 \xymatrix@C=4pc{\per\mathcal{B'}/\mathbf{L}T_{X^{'T}}(\per\mathcal{A'}) & \per\mathcal{B}/\mathbf{L}T_{X^{T}}(\per\mathcal{A}) \ar[l]_{\overline{\LT_{\w{V}}}} &\per\mathcal{B''}/\mathbf{L}T_{X^{''T}}(\per\mathcal{A''}) \ar[l]_{\overline{\LT_{\w{U}}}} \\
\mathcal{S'}/\per\mathcal{A'} \ar[u]_{\mathbf{L}T_{X^{'T}}} & \mathcal{S}/\per\mathcal{A} \ar[l]_{i^*} \ar[u]_{\mathbf{L}T_{X^{T}}} & \mathcal{S''}/\per\mathcal{A''} \ar[l]_{j_!} \ar[u]_{\mathbf{L}T_{X^{''T}}}
}
\]
where the vertical functors are all triangle equivalences up to direct summands.
By Lemma \ref{up}, the bottom row is an exact sequence up to direct summands.
\end{proof}

\bigskip

\section{An application  to algebraic geometry}\label{Sect: geometrical application}

In this section, we present an application to algebraic geometry.

Let $X$ be a scheme. Denote by $O_X$-Mod (resp. $\mathrm{Qcoh}(X)$    ) the category of $O_X$-modules (resp. quasi-coherent sheaves) on $X$. When $X$ is noetherian, $\mathrm{Coh}(X)$ denotes the category of coherent sheaves.   We will denote $\D(X)=\D_{\mathrm{Qcoh}}(O_X$-Mod) to be    the unbounded derived category of  $O_X$-Mod with quasi-coherent cohomology, which is equivalent to the unbounded derived category $\D(\mathrm{Qcoh}(X))$  of quasi-coherent sheaves only when $X$ is quasi-compact and separated \cite{ATJLL97, BN93, Keller01}.

Recall that for a noetherian scheme $X$, a bounded complex of coherent sheaves will be called a perfect complex if it is locally quasi-
isomorphic to a bounded complex of locally free sheaves of finite type. Denote by $\per(X)$ the full subcategory of $\D^b(\mathrm{Coh}(X))$ consisting of perfect complexes.

 Following \cite{Orlov04}, we will say that a scheme $X$ over a field $k$
satisfies the condition (ELF) if it is
  separated, noetherian, of finite Krull dimension, and the category of coherent sheaves
$\mathrm{Coh}(X)$ has enough locally free sheaves.
\begin{Def}[\cite{Orlov04}] Let $X$ be a scheme satisfying (ELF).  The bounded singularity category of $X$ is  defined to be
$$\D_{\sg}(X)=\D^b(\mathrm{Coh}(X))/\per(X).$$

\end{Def}
Let  $U\stackrel{i}{\hookrightarrow} X$ an open subscheme and write  $Z=X\backslash
U$. Let $$\D_Z^b(\mathrm{Coh}(X))=\{C\in \D^b(\mathrm{Coh}(X))\ |\ H^i(C) \ \mathrm{has \ its \ support \ in}\ Z, \forall i\in \mathbb{Z} \},$$
and $\per_Z(X)$ be the full triangulated category of $\per(X)$ consisting of perfect complexes supported in $Z$.
Then the bounded singularity category of $X$ supported in $Z$ is by definition the Verdier quotient
$$\D_{\sg, Z} (X):=\D_Z^b(\mathrm{Coh}(X))/\per_Z(X).$$
Let $i^*: \D_{\sg}(X)\to \D_{\sg}(U)$ be induced by the pullback functor $\mathrm{Coh}(X)\to \mathrm{Coh}(U)$ and $j_!: \D_{\sg, Z}^b(X)\to \D_{\sg}(X)$ be  induced by the inclusion functor $\D_Z^b(\mathrm{Coh}(X))\hookrightarrow \D^b(\mathrm{Coh}(X))$.

The following result relates these three singularity categories  $\D_{\sg}(X), \D_{\sg}(U)$ and $\D_{\sg, Z}^b(X)$ generalising
\cite[Proposition 2.7]{Orlov04} and \cite[Proposition 2.7]{Orlov10}.
\begin{Thm}[{\cite[Theorem 1.3]{Chen10}}]\label{Thm: Ses of Chen}
      There exists a short exact sequence   of    triangulated categories:
\[ \xymatrix{\D_{\sg}(U) & \D_{\sg}(X) \ar[l]|{i^*} & \D_{\sg, Z} (X) \ar[l]|{j_!}.  } 
\]

  \end{Thm}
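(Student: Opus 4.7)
The plan is to realise this statement as a direct application of Theorem~\ref{Thm:First Main Result}. The ambient recollement will be the classical open/closed recollement at the level of unbounded derived categories of $O_X$-modules with quasi-coherent cohomology:
\[
\xymatrixcolsep{4pc}\xymatrix{\D(U) \ar[r]|{i_*} & \D(X) \ar@/_1pc/[l]|{i^*} \ar@/^1pc/[l]|{i^!} \ar[r]|{R\Gamma_Z} & \D_Z(X), \ar@/_1pc/[l]|{j_!} \ar@/^1pc/[l]|{j_*}}
\]
where $\D_Z(X)=\ke(i^*)\subset \D(X)$, $i_*$ and $i^*$ are the usual geometric pushforward and restriction, and $j_!$ is the inclusion. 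Under the condition (ELF), each of $\D(U)$, $\D(X)$ and $\D_Z(X)$ is compactly generated with compact objects $\per(U)$, $\per(X)$ and $\per_Z(X)$ respectively (by the Thomason--Trobaugh--Neeman localisation theorem); moreover $\D(X)$ is algebraic, being the derived category of the Grothendieck abelian category of quasi-coherent sheaves.

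Next I take
\[
\mathcal{S}'=\D^b(\mathrm{Coh}(U)),\qquad \mathcal{S}=\D^b(\mathrm{Coh}(X)),\qquad \mathcal{S}''=\D^b_Z(\mathrm{Coh}(X)),
\]
viewed as triangulated subcategories of $\D(U)$, $\D(X)$ and $\D_Z(X)$ respectively. Each of these contains the corresponding category of compact objects, since under (ELF) every perfect complex is a bounded complex of coherent sheaves.

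The crux is to check that the first row of the recollement restricts to a short exact sequence up to direct summands
\[
\xymatrix{\D^b(\mathrm{Coh}(U)) & \D^b(\mathrm{Coh}(X)) \ar[l]_{i^*} & \D^b_Z(\mathrm{Coh}(X)) \ar[l]_{j_!}.}
\]
The easy parts are automatic: the restriction $i^*$ is exact and preserves coherence and boundedness, $j_!$ is the literal inclusion, and $i^*\circ j_!=0$ by the definition of $\mathcal{S}''$. The substantive content is that the induced functor $\D^b(\mathrm{Coh}(X))/\D^b_Z(\mathrm{Coh}(X))\to \D^b(\mathrm{Coh}(U))$ is an equivalence up to direct summands; this is the classical localisation theorem for bounded coherent derived categories over a noetherian scheme, whose essential input is Grothendieck's theorem that every coherent sheaf on $U$ extends to a coherent sheaf on $X$.

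With these data verified, Theorem~\ref{Thm:First Main Result} immediately yields the desired short exact sequence
\[
\xymatrix{\D_{\sg}(U) & \D_{\sg}(X) \ar[l]_{\bar{i}^*} & \D_{\sg,Z}(X) \ar[l]_{\bar{j}_!}}
\]
(up to direct summands), because $\mathcal{S}'/\mathcal{T}'^c=\D_{\sg}(U)$, $\mathcal{S}/\mathcal{T}^c=\D_{\sg}(X)$ and $\mathcal{S}''/\mathcal{T}''^c=\D_{\sg,Z}(X)$. The main obstacle I anticipate is the careful assembly of the geometric recollement and the identification of its compact objects in the generality of an (ELF) scheme, together with the verification that the recollement indeed restricts to the bounded coherent subcategories; each individual ingredient is classical, but marshalling them uniformly requires some care.
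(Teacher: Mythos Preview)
Your approach is essentially the same as the paper's: the paper also uses the open/closed recollement of $\D(X)$ in terms of $\D(U)$ and $\D_Z(X)$ (attributed there to J{\o}rgensen), identifies compact objects as $\per(U)$, $\per(X)$, $\per_Z(X)$, cites Orlov's bounded coherent localisation, and then applies Theorem~\ref{thm:s.e.s.-of-quotients-in-the-first-row} (the precise form of Theorem~\ref{Thm:First Main Result}).

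There is one small gap. You conclude only with a short exact sequence \emph{up to direct summands}, whereas the statement of Theorem~\ref{Thm: Ses of Chen} asserts a genuine short exact sequence. The paper closes this gap by observing that $i^*\colon \D^b(\mathrm{Coh}(X))\to\D^b(\mathrm{Coh}(U))$ is actually dense (essentially surjective), not merely dense up to summands --- this is exactly the consequence of Grothendieck's extension theorem that you already invoked --- and hence so is $\bar{i}^*\colon\D_{\sg}(X)\to\D_{\sg}(U)$. Equivalently, since the bounded coherent sequence is a genuine short exact sequence, you may apply the stronger (non-parenthetical) case of Theorem~\ref{thm:s.e.s.-of-quotients-in-the-first-row} directly. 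Either way, add one line to upgrade ``up to direct summands'' to an honest short exact sequence.
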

The rest of this section is to give a proof of the above result using   Theorem~\ref{thm:s.e.s.-of-quotients-in-the-first-row}. 

\medskip

Let $X$ be a quasi-compact quasi-separated scheme with a quasi-compact open subscheme $i: U\hookrightarrow X$ with the closed complement $Z=X\backslash U$.
    A new  recollement is discovered in  \cite{Jorgensen09}, which has
   the form
\begin{equation}\label{equ:Jorgensen}  \xymatrix{\D(U) \ar[r]|{i_*} &\D(X) \ar@/_1pc/[l]|{i^*} \ar@/^1pc/[l]|{i^!} \ar[r]|{j^*}
&\D_Z(X) \ar@/^-1pc/[l]|{j_!} \ar@/^1pc/[l]|{j_*},
  }   \end{equation}
where  $i^*$ is the pull-back functor, $i_*$ is the push-forward functor, $j_!$ is the inclusion functor, and $\D_Z(X)=\{C\in \D(X)\ |\ H^i(C) \ \mathrm{has \ its \ support \ in}\ Z, \forall i\in \mathbb{Z} \}$.

Recall that when $X$ is quasi-compact and quasi-separated, $\D(X)$ is compactly generated whose full subcategory of compact objects is exactly $\per(X)$ \cite[Theorem 3.1.1]{BVB03}.
Thus the above recollement will induce a short exact sequence  of triangulated categories:
 \[ \xymatrix{\per(U) & \per(X) \ar[l]|{i^*} & \per_Z(X) \ar[l]|{j_!} },
\]
where the full subcategory of compact objects in $\D_Z(X)$ is identified with $\per_Z(X)$ by \cite[Theorem 6.8]{Rouquier08}.

Assume now our scheme $X$ satisfies the condition (ELF). By \cite[Lemma 2.2]{Orlov10},  there exists  a short exact sequence   of triangulated categories:
\[ \xymatrix{\D^b(\mathrm{Coh}(U)) & \D^b(\mathrm{Coh}(X)) \ar[l]|{i^*} &  \D^b_Z(\mathrm{Coh}(X)) \ar[l]|{j_!}. } 
\]

Now it is ready to see that the hypothesis of Theorem~\ref{thm:s.e.s.-of-quotients-in-the-first-row} holds and as a consequence, we get   a short exact sequence, up to direct summands,  of   of triangulated categories:
\[ \xymatrix{\D_{\sg}(U) & \D_{\sg}(X) \ar[l]|{i^*} & \D_{\sg, Z}(X) \ar[l]|{j_!}. } 
\]
 Since $i^*: \D^b(\mathrm{Coh}(X))\to \D^b(\mathrm{Coh}(U))$ is dense, so is    $i^*: \D_{\sg}(X)\to \D_{\sg}(U)$  and the above sequence is a short exact sequence.

We are done.

\smallskip

We conclude this section by a remark.
\begin{Rem}
\begin{itemize}
 \item[(a)] It is obvious that the essential image of $ j_!: \D_{\sg, Z}(X) \to \D_{\sg}(X)  $ is generated by $\mathrm{Coh}_Z(X)$, where $\mathrm{Coh}_Z(X)$ is the category of coherent sheaves supported in $Z$.  So we have equivalences:
     $$\D_{\sg}(X)/\mathrm{thick}(\mathrm{Coh}_Z(X))\cong \D_{\sg}(X)/ (\mathrm{Coh}_Z(X)) \cong \D_{\sg}(U),$$
     which is the original form of \cite[Theorem 1.3]{Chen10}.
 
 \item[(b)] 
 Our proof of Theorem~\ref{Thm: Ses of Chen} follows the same  spirit of \cite[Proposition 6.9]{Krause05}. While \cite{Krause05} uses explicit models for ``large" singularity categories, our proof uses dg lifts.  We  mention that the diagram \eqref{diag:big-diagram} appearing in the proof of Theorem~\ref{thm:s.e.s.-of-quotients-in-the-first-row} can be identified with the diagram in    \cite[Page 1149]{Krause05} if we use appropriate explicit  models.

\end{itemize}
\end{Rem}

\bigskip



\begin{thebibliography}{299}
\bibitem{ATJLL97} Leovigildo Alonso Tarr\'{i}o,  Ana  Jerem\'{i}as L\'{o}pez,  and  Joseph Lipman, \emph{Local homology and cohomology on schemes}, Ann. Sci. \'{E}cole Norm. Sup. (4) \textbf{30} (1997), no. 1, 1-39.

\bibitem{Amiot09}
Claire Amiot, \emph{Cluster categories for algebras of global dimension $2$ and quivers with potential},
Annal. Institut Fourier  \textbf{59} (2009), no. 6,  2525-2590.

 \bibitem{AKLY}
Lidia Angeleri-H\"{u}gel, Steffen K\"{o}nig,  Qunhua Liu,  and Dong Yang,  \emph{Ladders and simplicity of derived module categories}.   J. Algebra \textbf{472} (2017), 15-66.




\bibitem{BBD}
Aleksandr A. Beilinson,  Joseph N.  Bernstein,  and Pierre Deligne, \emph{Faisceaux pervers}, Analysis and topology on singular spaces, I (Luminy, 1981), 5-171, Ast\'erisque, \textbf{100}, Soc. Math. France, Paris, 1982.



\bibitem{BGS} Aleksandr A. Beilinson, Victor Ginzburg,    and  Vadim V. Schechtman, \emph{Koszul duality}. J. Geom. Phys. \textbf{5} (1988), no. 3, 317-350.

 \bibitem{BN93}
Marcel B\" okstedt and  Amon Neeman. \emph{ Homotopy limits in triangulated categories}. Compositio  Math. \textbf{86}  (1993), no. 2,  209-234.


 \bibitem{BVB03} Alexey I.  Bondal and Michel Van den Bergh, \emph{Generators and representability of functors
in commutative and noncommutative geometry}, Mosc. Math. J. \textbf{3} (2003), 1-36.


 \bibitem{Buchweitz} Ragnar-Olaf Buchweitz,  \emph{Maximal Cohen-Macaulay modules and Tate cohomology}. With appendices and an introduction by Luchezar L. Avramov, Benjamin Briggs, Srikanth B. Iyengar and Janina C. Letz. Mathematical Surveys and Monographs, \textbf{262}. American Mathematical Society, Providence, RI, 2021. xii+175 pp. ISBN: 978-1-4704-5340-4

 \bibitem{Chen09}
Xiao-Wu Chen, \emph{ Singularity categories,  Schur functors and triangular matrix rings}. Algebr. Represent. Theory \textbf{12}  (2009), no. 2-5, 181-191.

\bibitem{Chen10}
Xiao-Wu Chen, \emph{Unifying two results of Orlov on singularity categories}. Abh. Math. Semin. Univ. Hambg. \textbf{80} (2010), no. 2, 207-212.



\bibitem{Chen14}
Xiao-Wu Chen, \emph{Singular equivalences induced by homological epimorphisms}.  Proc. Amer. Math. Soc. \textbf{142} (2014), no. 8, 2633--2640.

\bibitem{CPS}
Edward Cline, Brian Parshall and   Leonard  L.  Scott,  \emph{Stratifying endomorphism algebras}.   Mem. Amer. Math. Soc. \textbf{124} (1996), no. 591, viii+119 pp.

\bibitem{Drinfeld04}
Vladimir Drinfeld, \emph{D{G} quotients of {DG} categories}. J. Algebra \text{272} (2004), no. 2, 643--691.


\bibitem{GL91}
Werner Geigle and  Helmut Lenzing, \emph{perpendicular categories with applications to representations and sheaves}.    J. Algebra \textbf{144}(1991), no. 2, 273--343.

 \bibitem{Guo11}
 Lingyan Guo, \emph{Cluster tilting objects in generalized higher cluster categories},
 J. Pure Appl. Algebra \textbf{215} (2011), no. 9, 2055--2071.


\bibitem{Happel93}
Dieter Happel, \emph{Reduction techniques for homological conjectures},
Tsukuba J. Math. \textbf{17} (1993), no. 1, 115--130.


%



\bibitem{Jin-Schroll}
Haibo Jin and Sibylle Schroll, \emph{Singularity categories of graded proper monomial algebras}, in preparation.

\bibitem{KalckYang16}
Martin Kalck and Dong Yang, \emph{Relative singularity categories {I}:
  {A}uslander resolutions}, Adv. Math. \textbf{301} (2016), 973-1021.

\bibitem{KalckYang18b}
\bysame, \emph{Relative singularity categories {II}: {C}onstruction of dg models}, preprint.


\bibitem{Keller94}
B.~Keller, \emph{Deriving {D}{G} categories}, Ann. Sci. {\'E}cole Norm. Sup. (4)
  \textbf{27} (1994), no.~1, 63--102.

\bibitem{Keller01} \bysame,    \emph{Introduction to A-infinity algebras and modules}, Homology Homotopy Appl. \textbf{3} (2001), no. 1, 1--35 (electronic).

\bibitem{Keller06} \bysame,  \emph{On differential graded categories}. International Congress of Mathematicians. Vol. II, 151-190, Eur. Math. Soc., Z\"urich, 2006.


\bibitem{Keller10}
\bysame, \emph{Calabi-{Y}au triangulated categories}, Trends in representation
  theory of algebras and related topics, EMS Ser. Congr. Rep., Eur. Math. Soc.,
  Z{\"u}rich, 2008, pp.~467--489.

%
%
%

\bibitem{KellerYangZhou09}
Bernhard Keller, Dong Yang and Guodong Zhou, \emph{The {H}all algebra of a spherical object},
J. Lond. Math. Soc. (2) \textbf{80} (2009), no.~3, 771--784.

\bibitem{Koenig91}
Steffen Koenig, \emph{Tilting complexes, perpendicular categories and recollements of derived categories of rings},
J. Pure Appl. Algebra \textbf{73} (1991), 211-232.

 \bibitem{Krause05} Henning Krause, {\itshape The stable derived category of a Noetherian scheme}. Compos. Math. \textbf{141} (2005), no. 5, 1128-1162.


 \bibitem{Jorgensen09}
     Peter  J{\o}rgensen,   \emph{A new recollement for schemes}. Houston J. Math. \textbf{35} (2009), no. 4, 1071-1077.

\bibitem{LiuLu15}
Pin Liu and Ming Lu, \emph{Recollements of singularity categories and monomorphism categories}, Comm. Algebra \textbf{43} (2015), no.~{6}, 2443-2456.

\bibitem{Lu17}
Ming Lu, \emph{Gorenstein defect categories of triangular matrix algebras}, J. Algebra \textbf{480} (2017), 346--367.

\bibitem{Miyachi91}
Jun-ichi  Miyachi,  \emph{Localization of triangulated categories and derived categories}.  J. Algebra \textbf{141} (1991), no. 2, 463-483.



\bibitem{Neeman92a}
Amnon  Neeman, \emph{The connection between the {K--theory} localisation theorem of
  {Thomason}, {Trobaugh} and {Yao}, and the smashing subcategories of
  {Bousfield} and {Ravenel}}, Ann. Sci. {\'E}c. Norm. Sup{\'e}r.
  \textbf{25} (1992), 547-566.

\bibitem{Neeman96}
\bysame, \emph{The {Grothendieck} duality theorem via {Bousfield's} techniques
  and {Brown} representability}, J. Amer. Math. Soc. \textbf{9} (1996),
  205-236.




\bibitem{NicolasSaorin09}
Pedro Nicol{\'a}s and Manuel Saor{\'\i}n, \emph{Parametrizing recollement data
  for triangulated categories}, J. Algebra \textbf{322} (2009), no.~4,
  1220--1250.


\bibitem{Orlov04}
Dmitri O. Orlov, \emph{Triangulated categories of singularities and {D}-branes
  in {L}andau-{G}inzburg models}, Tr. Mat. Inst. Steklova \textbf{246} (2004),
  no.~Algebr. Geom. Metody, Svyazi i Prilozh., 240--262; translation in Proc. Steklov Inst. Math. 2004, no. 3(246), 227-248

 \bibitem{Orlov10} Dmitri O.  Orlov,\emph{Formal completions and idempotent completions of triangulated categories of singularities}. Adv. Math. \textbf{226} (2011), no. 1, 206-217.


\bibitem{Parshall89}
Brian J. Parshall, \emph{Finite dimensional algebras and algebraic groups}, Contemp. Math. \text{82} (1989), 97-114.

\bibitem{Qin20} Yongyun Qin,
\emph{Eventually homological isomorphisms in recollements of derived
              categories}, J. Algebra \textbf{563} (2020), 53-73.

 \bibitem{Rouquier08} Rapha\"{e}l Rouquier, \emph{Dimensions of triangulated categories},  J. K-Theory \textbf{1} (2008), no. 2, 193-256.



\bibitem{Yang12}
Dong Yang,  \emph{Recollements from generalized tilting}. Proc. Amer. Math. Soc. \textbf{140} (2012), no. 1, 83-91.


\bibitem{Wiedemann91}
Alfred  Wiedemann, \emph{On stratifications of derived module categories} Canad. Math. Bull. \textbf{34} (1991), no. 2, 275-280.


%





































\end{thebibliography}

\end{document}